\documentclass{amsart}

\usepackage{amssymb}
\usepackage{amsthm}
\usepackage{amsmath}
\usepackage{comment}
\usepackage{tikz-cd}

\usepackage{ stmaryrd }
\usepackage{tikz}
\usetikzlibrary{arrows,calc}

\usepackage[foot]{amsaddr}

\usepackage[colorlinks = true,backref = page,hyperindex,breaklinks]{hyperref}

\theoremstyle{plain}
\newtheorem{proposition}{Proposition}[section]
\newtheorem{theorem}[proposition]{Theorem}
\newtheorem{lemma}[proposition]{Lemma}
\newtheorem{corollary}[proposition]{Corollary}
\theoremstyle{definition}

\newtheorem{definition}[proposition]{Definition}

\theoremstyle{remark}
\newtheorem{remark}[proposition]{Remark}

\renewcommand{\hbar}{\bar{{\mathbb H}}^3}

\newcommand{\CC}{\mathbb C}

\newcommand{\R}{\mathbb R}

\newcommand{\Half}{{\mathbb H}}
\newcommand{\Hp}{{\mathbb H}^2}
\newcommand{\Hs}{{{\mathbb H}^3}}

\newcommand{\psl}{{\rm PSL}(2,\CC)}

\newcommand{\Ad}{\operatorname{Ad}}
\newcommand{\QF}{\operatorname{QF}}

\newcommand{\Id}{{\operatorname{Id}}}

\newcommand{\supp}{{\operatorname{supp}}}

\newcommand{\sech}{{\mathrm{sech}}}
\newcommand{\cartan}{a}

\newcommand{\PSL}{\mathsf{PSL}}
\newcommand{\SL}{\mathsf{SL}}
\newcommand{\GL}{\mathsf{GL}}
\renewcommand{\P}{{\sf{Pr}}}

\newcommand{\aff}[1]{\sf{F}_{#1}}
\renewcommand{\sf}[1]{{\mathsf{#1}}}
\newcommand{\cal}{\mathcal}
\newcommand{\lb}{\llbracket}
\newcommand{\rb}{\rrbracket}

\newcommand{\h}{{\mathfrak h}}
\newcommand{\C}{\mathbb C}
\renewcommand{\sl}{\mathfrak{sl}}
\newcommand{\diag}{{\operatorname{diag}}}

\newcommand{\G}{\Gamma}
\newcommand{\g}{\gamma}
\newcommand{\M}{\mathrm{MS}}
\newcommand{\co}{\sf u}
\renewcommand{\C}{\mathbb C}
\newcommand{\ad}{\operatorname{ad}}
\renewcommand{\a}{\frak a}
\newcommand{\eps}{\varepsilon}
\newcommand{\Hom}{\mathrm{Hom}}

\newcommand{\dd}{{\mathrm{d}}}

\begin{document}

\title[Bending, entropy and proper affine actions]{Bending, entropy and proper affine actions of surface groups}
\author[Bridgeman]{Martin Bridgeman}
\address{Boston College}
\author[Canary]{Richard Canary}
\address{University of Michigan}
\author[Sambarino]{Andres Sambarino}
\address{CNRS - Universit\'e Sorbonne Paris Nord}
\thanks{Bridgeman was partially supported by grant DMS-2405291 from the National Science Foundation and Simons Foundation Travel Grant MPS-TSM-00007295.
Canary was partially supported by grant  DMS- 2304636 from the National Science Foundation. The authors acknowledge support of the Institut Henri Poincar\'e 
(UAR 839 CNRS-Sorbonne Université), and LabEx CARMIN (ANR-10-LABX-59-01).
Bridgeman and Canary were partially supported by the National Science Foundation
under Grant No. DMS-1928930, while they were in residence at the Simons Laufer Mathematical Sciences Institute in Berkeley, California, during the Spring 2026 semester.}

\date{\today}

\begin{abstract} 
We show that for any closed surface $S$ there is an explicit neighborhood $V$ of the fuchsian locus in quasifuchsian space $\QF(S)$ such that for every representation 
$\rho\in V$ which is not
fuchsian,   there is a proper affine action on $\mathfrak{sl}(2,\CC)$ with linear part $\Ad(\rho)$. We further show that there is a larger neighborhood  
$U$ of the Fuchsian locus so that every
critical point of the entropy function in $U$ lies on the Fuchsian locus.
\end{abstract}

\maketitle

\setcounter{tocdepth}{1}
\tableofcontents

\section{Introduction}

Quasifuchsian hyperbolic 3-manifolds are a central object of study in low-dimensional topology and dynamics. One classical theme 
is the relationship between the geometry of the convex core and dynamical quantities like the topological entropy of the geodesic flow.
More recently, connections 
have emerged between quasifuchsian manifolds and proper affine actions of surface groups on $\sl(2,\C)\cong\mathbb R^6$.
In this paper, we investigate  bending deformations on the space $QF(S)$ of all (marked) quasifuchsian hyperbolic 3-manifolds.
We introduce the notion of a moderately bent Jordan domain and obtain applications to both the variation of the entropy function on $QF(S)$
and to proper affine actions of surface groups.

We begin our investigation by using work  Kourouniotis \cite{kouron,kouron-cont} to establish
a general formula for the variation  of the complex length function of an element  of $\pi_1(S)$ during a bending deformation.
If a boundary component of the convex core is moderately bent, we obtain control on the real part of the variation as
we bend along the associated bending lamination.

Our first application of this work is to show that if one component of the boundary of the  convex core of a quasifuchsian hyperbolic 3-manifold
(which is not fuchsian) is moderately bent, then it is not a critcal point of the entropy function on $QF(S)$.
Recall that the topological entropy of the geodesic flow of a quasifuchsian hyperbolic 3-manifold is the exponential growth
rate of the number of closed geodesics of length at most $T$. The topological entropy is an analytic function on $\QF(S)$ (see Ruelle \cite{ruelle}) and
it achieves its minimum value, which is 1, exactly along the locus of fuchsian groups (see Bowen \cite{bowen}). Previous work of Bridgeman \cite{bridgeman-wp}
provided some evidence that every critical point of the entropy function lies on the Fuchsian locus.

Our second application is to show that  if $\rho:\pi_1(S)\to\PSL(2,\C)$ is the holonomy representation of a quasifuchsian hyperbolic
3-manifold (which is not fuchsian) and both components of the boundary of the convex core are moderately bent, then $\Ad\rho$
is the linear part of a proper affine action of $\pi_1(S)$ on $\sl(2,\C)$. Danciger, Gu\'eritaud and Kassel \cite{DGK-aff} previously
exhibited  specific quasifuchsian hyperbolic 3-manifolds with this property. We extend our results to find proper affine actions of surface groups
on any simple complex Lie group. We also give applications to proper actions of surface groups on the group manifold of a complex simple Lie group.

Finally, we use work of Bridgeman, Canary and Yarmola \cite{BCY} to find explicit bounds on the ``roundness'' of the bending
lamination which guarantee that  the associated boundary component of the convex core is moderately bent. We can then describe
an explicit open neighborhood $U$ of the Fuchsian locus in $QF(S)$ so that every manifold in $U$ which is not fuchsian is not a
critical point of the entropy function. Similarly, we describe
an explicit open neighborhood $V$ of the Fuchsian locus in $QF(S)$ so that if $\rho$ is the holonomy representation of  manifold in $V$ which is not fuchsian,
then $\Ad\rho$ is the linear part of a proper affine action of $\pi_1(S)$ on $\sl(2,\C)$.

We next develop the language to state our results more concretely.  We follow this with a more in-depth discussion of the history of
proper affine actions.

\medskip\noindent
{\bf Statement of results:}
A  {\em quasifuchsian} representation is a  discrete faithful representation $\rho:\pi_1(S)\to \mathsf{PSL}(2,\mathbb C)$, 
where $S$ is a closed oriented surface of genus at least two, such that the limit set 
$\Lambda(\rho)$ of $\rho(\pi_1(S))$  is a Jordan curve. {\em Quasifuchsian space} $\QF(S)$ is the space of conjugacy classes of
quasifuchsian representations. We may regard
$\QF(S)$ as an open subset of the character variety
$$\frak X(S)=\mathrm{Hom}(\pi_1(S),\mathsf{PSL}(2,\mathbb C))//\mathsf{PSL}(2,\mathbb C).$$
Since every point in $QF(S)$ is a smooth point of $\frak X(S)$ (see \cite{goldman-symplectic}),
$QF(S)$ has the structure of a complex manifold. 

If $\rho$ is quasifuchsian then the complement of the limit set $\Lambda(\rho)$ is two Jordan domains $\Omega_+(\rho)$ and $\Omega_-(\rho)$.
The group $\rho(\pi_1(S))$ acts properly discontinuously on $\hat\C-\Lambda(\rho)$, so one obtains two marked Riemann
surfaces $X_+(\rho)=\Omega_+(\rho)/\rho(\pi_1(S))$ and $X_-(\rho)=\Omega_-(\rho)/\rho(\pi_1(S))$ where we  choose the labels on 
the components of $\hat\C-\Lambda(\rho)$
so that there is an orientation-preserving  homeomorphism from $S\cup\bar S\to X_+(\rho)\cup X_-(\rho)$ in the homotopy class determined
by $\rho$ (where $\bar S$ is $S$ with the opposite orientation). Bers showed that the resulting map from  $\QF(S)$ to 
$\mathcal T(S)\times\mathcal T(\bar S)$ is a real analytic diffeomorphism,
where $\mathcal T(S)$ is the Teichm\"uller space of $S$. 
A representation $\rho\in \QF(S)$ is {\em fuchsian} if it is conjugate to a representation with image in $\mathsf{PSL}(2,\R)$.
The {\em Fuchsian locus} $\mathrm{F}(S)\subset \QF(S)$ of fuchsian representations is identified with the diagonal in
this parametrization.

If  $\Omega$ is a Jordan domain whose boundary is a Jordan curve $C$,
we say a pair $(x,y)$ of distinct points in $C$ is a {\em bending pair} for $\Omega$ if there is a round open disk $D$ in $\Omega$ such that 
$x,y\in \partial D$.  We say that $\Omega$ is {\em moderately bent} if for every bending pair $(x,y)$ there exists a circle $L$ transverse to 
$C$ such that $L\cap C = \{x,y\}$. 

\medskip

We define the {\em entropy function} $h:\QF(S)\to [1,2)$  by letting $h([\rho])$ be the topological entropy of the geodesic flow on 
$N_\rho=\mathbb H^3/\rho(\pi_1(S))$, i.e.
the exponential growth rate of the number of closed geodesics in $N_\rho$ of length at most $T$. 
Sullivan \cite{sullivan-hd} proved that $h([\rho])$ is the Hausdorff dimension
of the limit set $\Lambda(\rho)$ of $\rho(\pi_1(S))$. One may also define 
$h([\rho])$ to be the {\em critical exponent} of the {\em Poincar\'e series}
$$P_\rho(s)  =\sum_{\gamma \in \pi_1(S)} e^{-sd(x_0,\gamma (x_0))},$$
for any $x_0\in\mathbb H^3$, i.e. $P_\rho(s)$ converges if $s>h([\rho])$ and diverges if $s<h([\rho])$.

Ruelle \cite{ruelle} showed that $h$ is real analytic. Bowen \cite{bowen} showed that $h$ attains its minimum value
exactly along the Fuchsian locus. One might hope that $h$ has no other critical points. Bridgeman \cite{bridgeman-wp}
showed that  the Hessian of the entropy function at any critical point is positive definite on at least a half-dimensional subspace, so the entropy 
functional has no local maxima. 

We show that if one of the components of the complement  of the limit set of a quasifuchsian (but not fuchsian) representation $\rho$
is moderately bent, then $\rho$ is not a critical point of the entropy function.

\begin{theorem}
\label{Main Theorem 2}
Suppose that  $[\rho]\in \QF(S)$ is  not fuchsian and either $\Omega_+(\rho)$ or $\Omega_-(\rho)$ is moderately bent,
then $[\rho]$ is not a critical point of the entropy function $h$.
\end{theorem}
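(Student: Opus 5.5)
We argue by exhibiting a tangent direction along which $h$ has nonzero derivative. Say $\Omega_+(\rho)$ is moderately bent, and let $\lambda$ be the bending measured lamination of the boundary component $\partial_+ C(N_\rho)$ of the convex core facing $\Omega_+(\rho)$. Since $\rho$ is not fuchsian, $\lambda\neq 0$. Consider the bending deformation $\rho_t$ obtained by bending $\partial_+C(N_\rho)$ along $t\lambda$; for a finite-leaved lamination this is a composition of Kourouniotis' elementary bends, and in general it is the limit of these. Kourouniotis' work gives a formula for the complex length variation
$$\frac{d}{dt}\Big|_{t=0}\ell_{\CC}(\rho_t(\gamma))=\int_{\tilde\gamma\cap\tilde\lambda}\cosh\big(d_{\CC}(\text{axis of }\gamma,\ \text{leaf})\big)\,d\lambda,$$
a sum over the leaves of $\tilde\lambda$ crossing a fundamental segment of the axis of $\rho(\gamma)$, with each term a (complex) cosine/cosh of the complex angle between the axis and the leaf geodesic. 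This is the first step.

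The second step, and the main obstacle, is to prove that moderate bending forces $\operatorname{Re}\frac{d}{dt}\ell(\rho_t(\gamma))\ge 0$ for every $\gamma$, with strict inequality for $\gamma$ whose axis crosses $\lambda$. Each leaf of $\tilde\lambda$ is a geodesic with endpoints a bending pair $(x,y)$, since it lies in a support plane whose boundary circle bounds a round disk in $\Omega_+$. Moderate bending supplies a circle $L$ through $x,y$ that is transverse to $\Lambda(\rho)$. I would use $L$ to separate the endpoints of the axis of $\rho(\gamma)$, which lie on $\Lambda(\rho)$ in the two arcs cut out by $x,y$. This pins down the complex cross-ratio of the endpoint quadruple to lie in a half-plane, and hence bounds the imaginary part of the complex distance away from $\pi/2$ modulo $\pi$. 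I expect this to yield the sign needed for the real part of each term. The care goes into the branch of the complex distance and the orientation convention for the bending direction. I would first check the computation in the fuchsian model $\rho_0$, where every term is $\cosh$ of a real distance, hence positive, and then argue that the transverse circle keeps the terms in the same closed half-plane.

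Third, translate this into entropy. Write the derivative of the Hausdorff dimension via the thermodynamic formalism, as in Bridgeman's work and in the analyticity of Ruelle. For a variation $\dot\rho$,
$$\dot h=-\frac{h\int \dot\ell\, dm}{\int \ell\, dm},$$
where $m$ is the equilibrium (Bowen–Margulis) measure, viewed as a geodesic current, and $\dot\ell$ is the derivative of the real length function extended to currents. This is equivalently a limit of the averages of $\dot\ell(\gamma)/\ell(\gamma)$ over closed geodesics, which equidistribute to $m$. Since $m$ has full support, it gives positive mass to geodesics crossing $\lambda$. Step two then gives $\int\dot\ell\,dm>0$, so $\dot h<0$ and $dh_{[\rho]}\neq 0$. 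The case where $\Omega_-(\rho)$ is moderately bent follows by the same argument with the orientation reversed.
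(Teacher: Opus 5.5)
\textbf{Gap in Step 2.} Your overall architecture matches the paper's: Kourouniotis' variation formula, a sign coming from moderate bending, then an entropy criterion. The gap is in Step 2, which is the actual content of the theorem, and the mechanism you sketch would not produce the sign.

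Bending is the \emph{imaginary} direction of Kourouniotis' complex parameter. Along $\rho_{\mp it\lambda}$ one has $\frac{d}{dt}\mathcal L_\gamma=\mp i\int\cosh\sigma\,d\widehat{(\gamma\times\lambda)}$. Your display omits this factor, or else it describes a twist rather than a bend. Hence the real length variation is $\pm\int\Im(\cosh\sigma)\,d\widehat{(\gamma\times\lambda)}$, and what must be signed is $\Im(\cosh\sigma)$, not the real part of $\cosh\sigma$.

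With $c=[u_-,v_-,v_+,u_+]$ and $\cosh\sigma=(c+1)/(c-1)$ one computes
$\Re(\cosh\sigma)=(|c|^2-1)/|c-1|^2$ and $\Im(\cosh\sigma)=-2\Im(c)/|c-1|^2$.
So ``the cross-ratio lies in a half-plane'' is exactly the statement $\Im(\cosh\sigma)<0$. It says nothing about $\Re(\cosh\sigma)$ or about $\Im\sigma$ modulo $\pi$. Your claim that $\Im\sigma$ stays away from $\pi/2$ modulo $\pi$ is false: moderate bending holds near the Fuchsian locus, yet leaves can cross axes at nearly right angles.

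Your Fuchsian sanity check is also mistaken. A leaf crossing the axis of a Fuchsian element actually meets it, so $\cosh\sigma=\cos\theta\in(-1,1)$, not the $\cosh$ of a real distance. Moreover, the relevant quantity $\Im(\cosh\sigma)$ vanishes there, so there is no Fuchsian half-plane to perturb from.

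The strict sign has to come from non-Fuchsian support-plane geometry, as in the paper:
\begin{itemize}
\item Normalize the bending pair to $(0,\infty)$, and choose a support plane meeting $\Lambda(\rho)$ only at $0,\infty$ whose disk is the upper half-plane.
\item Then $u_\pm$ lie in the open lower half-plane, on opposite sides of the transverse circle $L$, with $u_-$ on the side of the negative real axis by the crossing orientation.
\item Rotating so that $z_+=r(u_+)>0$ forces $\Im z_-<0$, hence $\Im(z_+/z_-)>0$.
\end{itemize}

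\textbf{Step 3 takes a different route.} The paper invokes Sambarino's lemma. For non-Fuchsian $\rho$, the closure of $\{d\ell_\gamma(v)/\ell_\gamma\}$ is an interval with non-empty interior, and $-dh(v)/h$ lies in that interior. So the \emph{weak} inequality $d\ell_\gamma(w)\le 0$ for all $\gamma$ already gives $dh(w)\neq 0$.

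Your formula $\dot h=-h\int\dot f\,dm/\int f\,dm$ instead needs strict positivity of an integral. Positivity of $\dot\ell(\gamma)$ for each closed geodesic crossing $\lambda$ does not give this, since normalized averages of positive numbers can tend to $0$. To make it work you need:
\begin{itemize}
\item the variation formula at the level of currents, $\dot\ell(\mu)=\pm\int\Im(\cosh\sigma)\,d\widehat{(\mu\times\lambda)}$, continuous in $\mu$ so that it can be evaluated on the Bowen--Margulis current $m$;
\item a strict sign of the integrand on all of $\mathrm{supp}\,\widehat{(m\times\lambda)}$ (the moderate-bending argument applies to any geodesic with endpoints in $\Lambda(\rho)$);
\item $i(m,\lambda)>0$.
\end{itemize}
This is essentially the compactness argument of the paper's uniform-variation lemma. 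It is completable, but Sambarino's criterion makes it unnecessary.
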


\medskip

We recall that if $V$ is a finite-dimensional vector space, then the set  $\mathrm{Aff}(V)$ of affine transformations of $V$  is 
the semi-direct product of $\mathsf{GL}(V)\ltimes V$. The action of $(A,w)\in \mathrm{Aff}(V)$ is given by
$$(A,w)(v)=Av+w\quad\text{for all}\quad v\in V.$$
Let $\Ad:\mathsf{PSL}(2,\mathbb C)\to\SL(\mathfrak{sl}(2,\CC))\subset\mathrm{Aff}(\sl(2,\C))$ be the adjoint representation,
i.e. $\Ad(A)(v)=AvA^{-1}$ for all $v\in\sl(2,\C)$. If $\sigma:\Gamma\to\mathrm{Aff}(V)$ is a representation, then the restriction of
$\sigma$ to the first factor $\GL(V)$ is a representation which we call the {\em linear part} of $\sigma$.

\begin{theorem}
\label{Main Theorem 3}
If $[\rho]\in \QF(S)$ is not fuchsian and both $\Omega_+(\rho)$ and $\Omega_-(\rho)$ are moderately bent,
then there exists a representation $\sigma:\pi_1(S)\to\mathrm{Aff}(\sl(2,\mathbb C))$  whose linear part is $\Ad(\rho)$
so that $\sigma(\pi_1(S))$ acts properly discontinuously on $\sl(2,\C)$.
\end{theorem}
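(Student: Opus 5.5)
The plan is to use the Margulis / Goldman–Labourie–Margulis criterion, in the form of Danciger–Guéritaud–Kassel \cite{DGK-aff} for $\sl(2,\C)$. An affine deformation of $\Ad\rho$ is determined by a cocycle $u\in Z^1(\pi_1(S),\sl(2,\C)_{\Ad\rho})$. After the identification $\sl(2,\C)\cong T_\rho$ of the character variety, such a cocycle corresponds to a tangent vector $\dot\rho\in T_{[\rho]}\QF(S)$. The generalized Margulis invariant of $\gamma$ is, up to normalization, the derivative $\frac{d}{dt}\big|_{t=0}\mathrm{Re}\,\ell_\C(\rho_t(\gamma))$ of the real translation length along a path $\rho_t$ tangent to $\dot\rho$. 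By \cite{DGK-aff}, the affine action is properly discontinuous provided the deformation uniformly contracts (or uniformly expands) all translation lengths. This means there should be $c>0$ with
$$\frac{d}{dt}\Big|_{t=0}\ell(\rho_t(\gamma))\le -c\,\ell(\rho(\gamma))\quad\text{for all }\gamma\in\pi_1(S),$$
or the reverse inequality. So the goal is to produce a tangent direction in which every real length strictly and proportionally decreases.

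The natural candidate comes from bending. Let $\lambda_\pm$ be the bending laminations of the two boundary components of the convex core, corresponding to $\Omega_\pm(\rho)$. Since $\rho$ is not fuchsian, both bending measures are nonzero. Consider the infinitesimal bending deformations $\beta_\pm$ along $\lambda_\pm$ (via Kourouniotis \cite{kouron,kouron-cont}), oriented so as to \emph{flatten} each boundary, i.e. to decrease its bending. The first step is the variation formula for complex length under bending (established earlier in the paper). It expresses $\frac{d}{dt}\ell_\C(\gamma)$ as an integral over the intersection of $\gamma$ with $\lambda_\pm$ of terms built from the complex distances between the axis of $\gamma$ and the leaves.

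The second step uses moderate bending. For each leaf crossed by $\gamma$, the endpoints form a bending pair $(x,y)$. The transverse circle $L$ through $x,y$ meeting $\Lambda(\rho)$ only there should force the relevant cross-ratio angle to lie in an open half-plane. This makes the real part of each integrand have a definite sign. Compactness of the space of (normalized) bending pairs, which comes from cocompactness of the action on the convex hull boundary, should then upgrade this to a bound of the form $\mathrm{Re}\,\dot\ell_\C(\gamma)\le -c\, i(\gamma,\lambda_\pm)$. So flattening one moderately bent side decreases every length by at least a multiple of the intersection number with that side's bending lamination.

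The third step combines the two sides. Take $\dot\rho=\beta_++\beta_-$. Then
$$\mathrm{Re}\,\dot\ell(\gamma)\le -c\,\bigl(i(\gamma,\lambda_+)+i(\gamma,\lambda_-)\bigr).$$
To conclude, I need $i(\gamma,\lambda_+)+i(\gamma,\lambda_-)\ge c'\ell(\rho(\gamma))$. This holds provided $\lambda_+\cup\lambda_-$ fills $S$, which I expect to justify from the fact that a closed curve disjoint from both bending laminations would be a totally geodesic curve on both convex core boundaries, contradicting non-fuchsianity. The inequality then extends by continuity from closed curves to geodesic currents. The continuity and scaling of both sides over the compact space of projectivized currents then yield a uniform $c'$. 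Applying \cite{DGK-aff} (the contraction criterion on currents) gives properness.

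The main obstacle is the second step: turning the purely geometric moderate-bending condition into a uniform strict sign for the real part of the length variation. This is especially delicate for leaves that are limits of closed leaves or carry atomless measure. My first attempt would be a pointwise sign from the transverse circle, followed by a compactness argument over bending pairs. A further point to check is that the filling argument holds when the laminations share leaves.
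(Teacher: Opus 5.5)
Your outline matches the paper's: the variation formula for complex length under bending, a pointwise sign from moderate bending via the cross-ratio, summing the two bending deformations, binding of $\beta_\pm$, and a Margulis-spectrum properness criterion. However, your second step is false as stated, and your third step inherits the problem.

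The integrand in the variation formula is $\Im\cosh\sigma(a,m)$, where $a$ is an axis of $\gamma$ and $m$ is a leaf it crosses. Suppose the crossing angle in $X$ tends to $0$ or $\pi$, with the crossing point normalized into a compact set. Then the endpoints of $a$ converge to those of $m$, so $[u_-,v_-,v_+,u_+]\to\infty$, $\cosh\sigma\to 1$, and the integrand tends to $0$. The set you need to be compact is not the set of bending pairs; leaves modulo $\rho(\pi_1(S))$ are already compact. What you need is compactness of the set of pairs (leaf, crossing axis), and that set is not compact.

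Concretely, let $\beta_+=\theta\alpha$ be a weighted simple closed curve and $\gamma_n=T_\alpha^n(\eta)$ for a closed curve $\eta$ crossing $\alpha$. Then $i(\gamma_n,\beta_+)=\theta\, i(\eta,\alpha)$ is constant while all crossing angles tend to $0$. So $\dd\ell_{\gamma_n}(w_+)\to 0$, and no bound $\dd\ell_\gamma(w_+)\le -c\,i(\gamma,\beta_+)$ can hold.

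The missing idea is the $\delta$-intersection number. On the compact set $D_\delta(X)$ of crossings at angle in $[\delta,\pi-\delta]$, the integrand is uniformly negative. This gives $\dd\ell_\gamma(w_\nu)\le -D\,i_\delta(\gamma,\beta_\nu)$ (Lemma \ref{uniform variation}); the small-angle crossings have the right sign but contribute no definite amount.

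You then need a stronger filling statement than the one you wrote: for a binding pair there are $\delta,C>0$ with $i_\delta(\beta_+,\mu)+i_\delta(\beta_-,\mu)\ge C\,\ell_X(\mu)$ for all currents $\mu$ (Lemma \ref{small enough delta}). This does not follow from the full-intersection inequality. In the twist example, it is the $\beta_-$-crossings, made at angles bounded away from $0$, that carry the estimate. Since $i_\delta$ is not continuous in $\mu$, the lemma needs its own compactness argument on unit-length currents, using a bump function supported in $D_{1/2N}(X)$ to get the required lower semicontinuity.

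Separately, positivity of $i(\beta_+,\mu)+i(\beta_-,\mu)$ must be known for every nonzero current, not just for closed curves, before compactness gives a uniform constant. Your geometric argument works if you apply it to an arbitrary geodesic in the support of $\mu$, or you can cite Bonahon--Otal as the paper does. With these two lemmas your third step goes through and yields Theorem \ref{Main Theorem 4}.

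Finally, your sign is reversed. The paper's $w_\nu$, along $-i\beta_\nu$, increases the bending and satisfies $\dd\ell_\gamma(w_\nu)\le 0$ (Theorem \ref{Main Theorem 1}), so flattening lengthens curves. This is harmless, because you use the same convention on both sides and the properness criterion is symmetric under $u\mapsto -u$.
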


The proofs of both Theorem \ref{Main Theorem 2} and Theorem \ref{Main Theorem 3} involve studying bending deformations of
quasifuchsian representations.
If $\rho$ is quasifuchsian, then its {\em convex core} $C(\rho)$ is the quotient of the convex hull $CH(\rho)$ of $\Lambda(\rho)$ in $\mathbb H^3$ by
$\rho(\pi_1(S))$. If $\rho$ is not fuchsian, then $C(\rho)$ is homeomorphic to $S\times [0,1]$ and
has two boundary components $\partial C_\pm(\rho)$,
each of which is totally geodesic in the complement of a geodesic lamination $\beta_\pm$. 
We choose our signs so that $\partial CH_+(\rho)$ faces $\Omega_+(\rho)$. We often abuse notation by saying that $\partial  C_\nu(\rho)$
is moderately bent when $\Omega_\nu(\rho)$ is moderately bent.
The intrinsic metric on each component of $\partial C(\rho)$ is hyperbolic, and one may associate  a transverse measure
to $\beta_\pm$ which measures the total bending  of $\partial C_\pm(\rho)$ along the lamination, to obtain the 
{\em bending laminations} (see Epstein-Marden \cite{EM} for background on convex hulls). Dular and Schlenker \cite{dular-schlenker} 
recently showed that $\rho$ is completely determined by
its pair of bending laminations.

If $\rho$ is quasifuchsian and $\lambda$ is a measured lamination, one may define a {\em bending deformation}
$\{\rho_{z\lambda}\}_{z\in\C}\subset \frak X(S)$ of $\rho$ along $\lambda$.
If $\lambda$ is a simple closed separating curve $a$, then we may write
$\pi_1(S)=\pi_1(S_1)*_{\langle \alpha\rangle}\pi_1(S_2)$ and define $\rho_{z\lambda}$ for any $z\in\C$ by letting
$\rho_{z\lambda}|_{\pi_1(S_1)}=\rho|_{\pi_1(S_1)}$ and $\rho_{z\lambda}(g)=A_z\rho(g)A_z^{-1}$ for all $g\in\pi_1(S_2)$ 
where $A_z$ has the same axis as $\rho(g)$ and complex translation length $z$.
In this convention, pure twisting corresponds to the case when $z$ is totally real and pure bending corresponds 
to the case where $z$ is totally imaginary.
We  generalize work of Kourouniotis \cite{kouron,kouron-cont} to obtain a formula for
the derivative of the complex length of any element of $\pi_1(S)$, see Theorem \ref{derivative of complex length}. 
The bending deformation is the complex Hamiltonian of the complex length function $\mathcal L_\lambda:\QF(S) \rightarrow \C$ 
of the lamination $\lambda$ with respect to the natural complex symplectic structure on $\QF(S)$ described 
by Goldman (see \cite{goldman-qc-symp,goldman-symplectic}), see Platis \cite{platis}.

We  generalize work of Kourouniotis \cite{kouron,kouron-cont} to obtain a formula for
the derivative of the complex length of an element $\gamma\in\pi_1(S)$ with respect to a bending deformation
along a bending lamination $\beta_\nu$ for $\rho\in QF(S)$, see Theorem \ref{derivative of complex length}. 
Our formula is expressed in terms of the complex distances between the axis of $\rho(\gamma)$ and lifts
of leaves of $\beta_n$. Assuming that $\Omega_\nu(\rho)$ is moderately bent places restrictions
on these complex distances.

If  $\rho$ is quasifuchsian and $\beta_\nu$ is a bending lamination of $\rho$ (where $\nu\in\{\pm\}$) and
$\{\rho_t\}$ is the bending deformation of $\rho_0$ along  $-i\beta_\nu$, then we define the {\em infinitesmal bending deformation} along
$-i\beta_\nu$ to be
$$w_\nu(\rho)=\frac{d}{dt}\Big|_{t=0}\rho_{-ti\beta_\nu}\in \sf{T}_{[\rho]}\QF(S).$$
(As defined, $w_\nu(\rho)$ lies in $\sf{T}_\rho\mathrm{Hom}(\pi_1(S),\PSL(2,\C))$, but it descends to an element of $\sf{T}_{[\rho]}\QF(S)$ which is
independent of the choice of representative of $[\rho]$.)
We choose $-it\beta_\nu$ as our deformation since this corresponds to increasing the bending measure of the bending lamination on the convex core boundary. 
If $\gamma\in\pi_1(S)$, consider the real analytic function \hbox{$\ell_\gamma:\QF(S)\to (0,\infty)$} where
$\ell_\gamma([\rho])$ is the (real) translation distance of $\rho(\Gamma)$. 
We say that a closed  curve intersects a bending lamination $\beta_\nu$ transversely if its geodesic representative in $\partial C_\nu(\rho)$ intersects $\beta_\nu$
transversely.

\begin{theorem}
\label{Main Theorem 1}
Suppose that $[\rho]\in \QF(S)$ is not fuchsian, $\Omega_\nu(\rho)$ is moderately bent (for some $\nu\in\{\pm\}$) and
$w=w_\nu(\rho)$ is the infinitesimal bending deformation for $-i\beta_\nu$.
If $\gamma\in \pi_1(S)$, then $d\ell_\gamma(w)\le 0$ and $d\ell_\gamma(w) <0$ if  $\gamma$ intersects $\beta_\nu$ transversely.
\end{theorem}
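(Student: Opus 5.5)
We plan to derive Theorem \ref{Main Theorem 1} from the variational formula for complex length under bending, Theorem \ref{derivative of complex length}. First we treat the case where $\beta_\nu$ is a finite weighted multicurve, and then pass to general laminations by approximation. Fix $\gamma$ with axis $A_\gamma$ for $\rho(\gamma)$. In the Kourouniotis-type formula, the derivative of the complex length $\mathcal L_\gamma$ along $-i\beta_\nu$ is a sum, or integral against the transverse measure, over lifts $\tilde b$ of leaves of $\beta_\nu$ that cross a fundamental segment of $A_\gamma$. Each term has the form $-i\,\cosh(d(\tilde b))$ or a similar expression, where $d(\tilde b)=d_{\tilde b}+i\theta_{\tilde b}$ is the complex distance between $A_\gamma$ and $\tilde b$, measured along their common perpendicular. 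Since $d\ell_\gamma(w)=\mathrm{Re}\, d\mathcal L_\gamma(w)$, the problem reduces to showing that each term has nonpositive real part, i.e. a sign condition on $\sin\theta_{\tilde b}$ (times a positive factor like $\sinh d_{\tilde b}$ or $\cosh$). I would first compute this reduction explicitly and record which sign of the angle is needed, using the orientation conventions that make $-i\beta_\nu$ increase the bending.

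The key step is to convert moderate bending into this sign condition. Each leaf $\tilde b$ of the lifted bending lamination is a geodesic in a support plane of $\partial CH_\nu(\rho)$. Its endpoints $(x,y)$ lie on $\Lambda(\rho)$ and bound a maximal round disk in $\Omega_\nu(\rho)$, so $(x,y)$ is a bending pair. Moderate bending then gives a circle $L$ through $x,y$ that is transverse to $\Lambda(\rho)$ and meets it only at $x,y$. The endpoints $\gamma^\pm$ of $A_\gamma$ lie on $\Lambda(\rho)$. When $\gamma$ crosses $\tilde b$ (that is, when the endpoints are linked in the Jordan curve), $\gamma^+$ and $\gamma^-$ lie in different components of $\hat\C-L$, on the arcs of $\Lambda(\rho)$ determined by $x$ and $y$. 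Normalize by a M\"obius map so that $x=0$ and $y=\infty$; then $L$ is a line through $0$ and $\tilde b$ is the vertical axis. The complex distance between $A_\gamma$ and $\tilde b$ is governed by the cross ratio of $(0,\infty,\gamma^+,\gamma^-)$, essentially $\gamma^+/\gamma^-$. Its argument is the twist angle $\theta$ (up to a shift of $\pi$), and it lies in a half-plane determined by which side of $L$ each endpoint lies on. Transversality at $x$ and $y$ also fixes the orientation consistently with the side of $\Omega_\nu$, which is what pins down the sign. I would carry out this normalized computation carefully: it shows that the strict inequality holds whenever $\gamma^\pm$ lie strictly on opposite sides of $L$, which they do because $L\cap\Lambda=\{x,y\}$.

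To finish, sum or integrate. Leaves not crossing $A_\gamma$ contribute nothing, so $d\ell_\gamma(w)\le 0$ always. If $\gamma$ meets $\beta_\nu$ transversely, a set of leaves of positive transverse measure crosses the axis. Each such leaf contributes a strictly negative real part, so the inequality is strict. For a general measured lamination, I would use the integral version of Theorem \ref{derivative of complex length} directly, since the pointwise sign argument is measure-independent.

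I expect the main obstacle to be the sign bookkeeping in the normalized cross-ratio computation. One must show that moderate bending forces the argument of the cross ratio to lie in the correct open half-plane, and not merely avoid the real axis. One must also check that the convention $-i\beta_\nu$, together with the choice of $\Omega_\nu$ facing $\partial CH_\nu$, produces the minus sign uniformly for every crossing leaf. I would try to settle this first in the fuchsian-limit picture, where $\theta=\pi/2$ and the sign is known to be negative, and then argue by continuity. As the leaf configuration varies, the sign can only change if $\gamma^\pm$ cross the circle $L$, and moderate bending forbids this.
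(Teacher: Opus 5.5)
Your reduction is the same as the paper's. Applying Theorem~\ref{derivative of complex length} to $-i\beta_\nu$ gives $d\ell_\gamma(w)=\int_{D_\gamma(X)}\Im(\cosh\sigma_{\rho,\gamma})\,d\widehat{(\gamma\times\beta_\nu)}$. By \eqref{Im-cr} it then suffices to show $\Im[u_-,v_-,v_+,u_+]>0$ on the support; after normalizing $(v_-,v_+)=(0,\infty)$ this cross ratio is $u_+/u_-$. Drop the idea of first treating finite-leaved $\beta_\nu$ and approximating: the approximants are not bending laminations of $\rho$, so moderate bending says nothing about their leaves. Applying the integral formula directly, as you end up proposing, is what the paper does.

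The gap is in the step you flag as the main obstacle. Knowing that $u_+$ and $u_-$ lie in opposite components of $\hat\C-L$ does not determine the sign of $\Im(u_+/u_-)$. If $L$ is the imaginary axis, $(u_+,u_-)=(1+i,-1+i)$ gives $u_+/u_-=-i$, while $(1-i,-1-i)$ gives $u_+/u_-=i$; and $u_-=-tu_+$ with $t>0$ gives a real ratio. So your claim that strictness holds ``whenever $\gamma^\pm$ lie strictly on opposite sides of $L$'' is false. The continuity fallback fails for the same reason. The sign can change when $u_+/u_-$ crosses the real axis, i.e.\ when $u_-$ reaches the ray opposite to $u_+$, and that never requires crossing $L$. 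The fallback also has no valid starting point: in the fuchsian picture all four endpoints are cocircular, so $\Im\cosh\sigma=0$, not negative.

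What is missing is the other half of the geometry of a bending pair. The support plane through the leaf bounds an open round disk $D\subset\Omega_\nu(\rho)$ with $x,y\in\partial D$. Hence $\Lambda(\rho)\cap D=\emptyset$, and in particular $u_\pm\notin D$.

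The paper normalizes so that $D$ is the upper half-plane. Then $u_\pm$ lie in the closed lower half-plane, which the lower ray of $L$ cuts into two sectors. Moderate bending puts $u_+$ and $u_-$ in different sectors, and the left-to-right crossing convention fixes which is which. Hence $\arg(u_+/u_-)\in(0,\pi]$. The value $\pi$ occurs only if both $u_\pm$ lie on $\partial D$, and the paper excludes this by choosing the support plane to meet $\Lambda(\rho)$ only at $x$ and $y$. So $\Im(u_+/u_-)>0$.

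The sign is fixed only by combining the two constraints: $D$ confines the endpoints to a half-plane, and $L$ separates them within it. Your appeal to transversality and ``the side of $\Omega_\nu$'' must be replaced by exactly this use of $D$.
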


Theorem \ref{Main Theorem 2} then follows immediately from Theorem \ref{Main Theorem 1} and a result of Sambarino \cite[Lemma 2.32]{sambarino},
see Section \ref{entropy}.

\medskip

In order to prove Theorem \ref{Main Theorem 3} we need the following strengthening of Theorem \ref{Main Theorem 1}.

\begin{theorem}\label{Main Theorem 4}
Suppose that $[\rho]\in \QF(S)$ is not fuchsian and that $w= w_+(\rho)+w_-(\rho)$. If both $\Omega_+(\rho)$ and $\Omega_-(\rho)$ are
moderately bent, then there exists  $K > 0$ such that
$$\dd\ell_\g(w) \leq  -K \ell_\g(\rho)$$
for all $\gamma\in\pi_1(S)$.
\end{theorem}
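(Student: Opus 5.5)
We prove a uniform linear bound via a compactness argument over geodesic currents, rather than curve by curve. Since $d\ell_\gamma(w)=d\ell_\gamma(w_+(\rho))+d\ell_\gamma(w_-(\rho))$, Theorem \ref{Main Theorem 1} applied to each $\nu$ shows that both summands are nonpositive. Moreover, at least one is strictly negative unless $\gamma$ is disjoint from both bending laminations $\beta_+$ and $\beta_-$.

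The first point is that no nontrivial $\gamma$ is disjoint from both. Because $\rho$ is not fuchsian, both $\beta_\pm$ are nonzero. If $\gamma$ missed both $\beta_+$ and $\beta_-$, then $i(\gamma,\beta_+)=i(\gamma,\beta_-)=0$. In that case the pair $(\beta_+,\beta_-)$ would fail to fill $S$, which contradicts the known filling property of the two bending laminations of a quasifuchsian manifold. (Bonahon–Otal show that realizable pairs fill; this also follows from the Dular–Schlenker description.) Hence $d\ell_\gamma(w)<0$ for every nontrivial $\gamma$.

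To make the bound uniform, we pass to geodesic currents. The formula of Theorem \ref{derivative of complex length} writes $d\ell_\gamma(w_\nu)$ as a sum (an integral, in the lamination case) over intersections of the axis of $\rho(\gamma)$ with lifts of leaves of $\beta_\nu$. The integrand is a function of the complex distance between the axis and the leaf, and it is nonpositive by moderate bending. Since everything is $\rho(\pi_1(S))$-invariant, we rewrite this as $d\ell_\gamma(w_\nu)=-\,\Phi_\nu(\delta_\gamma)$. Here $\delta_\gamma$ is the geodesic current of $\gamma$, and $\Phi_\nu(\mu)=\int F_\nu\, d(\mu\times\beta_\nu)$ is taken over the space of transversely intersecting pairs of geodesics modulo $\pi_1(S)$, with $F_\nu\ge 0$ the integrand. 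The map $\Phi_\nu$ is homogeneous of degree one in $\mu$, just as $\ell_\gamma(\rho)$ extends to a continuous length function $\ell_\rho(\mu)$ that is positive on nonzero currents.

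Now consider the function $\Psi(\mu)=(\Phi_+(\mu)+\Phi_-(\mu))/\ell_\rho(\mu)$ on the compact space of projectivized currents, or equivalently on currents normalized by $\ell_\rho(\mu)=1$. Closed curves are dense there. Provided $\Psi$ is continuous and positive, the constant $K=\min\Psi>0$ gives $d\ell_\gamma(w)\le -K\ell_\gamma(\rho)$.

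Positivity of $\Psi$ would follow from a strict version of moderate bending. Under that hypothesis, $F_\nu>0$ at every transverse intersection, which means the circle $L$ in the definition can be chosen uniformly transverse. By compactness of the quotient of the set of bending pairs, the relevant complex angles then stay in a compact set where the integrand is strictly positive. Consequently $\Phi_\nu(\mu)>0$ whenever $i(\mu,\beta_\nu)>0$. Combined with the filling property, which for currents says $i(\mu,\beta_+)+i(\mu,\beta_-)>0$ for every $\mu\ne0$, this gives positivity.

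\textbf{Main obstacle.} The hard step is continuity of $\Phi_\nu$ in $\mu$, together with a lower bound of the form $\Phi_\nu(\mu)\ge c\, i(\mu,\beta_\nu)$. The difficulty is that $F_\nu$ may degenerate at small intersection angles. My first attempt would be to use the cocompactness of the set of pairs (axis, leaf) that meet transversely on the convex hull boundary. On that set I would show that $F_\nu$ extends continuously and is bounded below by a positive multiple of the sine of the intersection angle. This would give $\Phi_\nu(\mu)\ge c\int \sin\theta\, d(\mu\times\beta_\nu)$, and this last quantity is continuous and positive whenever the intersection number is positive.
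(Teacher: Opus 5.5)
Your overall architecture (minimize over unit-length currents, with positivity coming from the binding of $(\beta_+,\beta_-)$) is sound. However, the estimate you propose for the step you call the main obstacle is false.

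The integrand $F_\nu=-\Im\cosh\sigma$ is not bounded below by $c\sin\theta$. As the intersection angle $\theta$ tends to $0$ or $\pi$, the two image geodesics in $\mathbb H^3$ converge to each other, so $\sigma$ tends to $0$ or $i\pi$. These are critical points of $\cosh$, so $F_\nu$ vanishes to second order. You can see this explicitly for a single bending line $0\infty$ with bending angle $\beta$, crossed at angle $\theta$. There $u_-=\tan(\theta/2)e^{i(\pi+\beta/2)}$ and $u_+=\cot(\theta/2)e^{-i\beta/2}$. Equation (\ref{Im-cr}) then gives $F_\nu=2R\sin\beta/(R^2+2R\cos\beta+1)$ with $R=\cot^2(\theta/2)$, so $F_\nu\approx\frac{\sin\beta}{2}\theta^2$.

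The same degeneration rules out $\Phi_\nu(\mu)\ge c\,i(\mu,\beta_\nu)$. If $\beta_\nu$ is a weighted simple closed curve, Dehn twisting a transverse curve about it keeps $i(\cdot,\beta_\nu)$ fixed, while the crossing angle, and hence $\Phi_\nu$, tends to $0$.

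You also do not need a ``strict version'' of moderate bending. The proof of Theorem \ref{Main Theorem 1} already gives $F_\nu>0$ at every transverse pair. What fails is only uniformity near the diagonal, and no choice of the circle $L$ repairs that.

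In fact no lower bound near small angles is needed at all. Since $F_\nu$ is continuous and nonnegative, $\Phi_\nu(\mu)$ is the supremum of $\int\chi\,d\widehat{(\mu\times\beta_\nu)}$ over compactly supported continuous $0\le\chi\le F_\nu$. It is therefore lower semicontinuous in $\mu$. A positive lower semicontinuous function on the compact set $\{\ell_X(\mu)=1\}$ has a positive minimum, and comparing $\ell_X$ with $\ell_\rho$ finishes the proof.

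The paper carries out the same idea in two explicit steps that simply discard small-angle intersections:
\begin{enumerate}
\item On the compact set of pairs in $D_\delta(X)$ with $\vec v$ tangent to $\beta_\nu$, continuity and pointwise positivity give $F_\nu\ge D>0$. Hence $\dd\ell_\gamma(w_\nu)\le -D\,i_\delta(\gamma,\beta_\nu)$ (Lemma \ref{uniform variation}).
\item A compactness argument over unit-length currents, using binding and a bump function, produces a single $\delta$ with $i_\delta(\beta_+,\mu)+i_\delta(\beta_-,\mu)\ge C\ell_X(\mu)$ (Lemma \ref{small enough delta}).
\end{enumerate}
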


We now outline how Theorem \ref{Main Theorem 4} is used to prove Theorem \ref{Main Theorem 3}.
If $\rho:\pi_1(S)\to\PSL(2,\C)$ is quasifuchsian, then $\co:\pi_1(S)\to\sl(2,\C)$ is a cocycle for $\Ad\rho$ if
$$\co(\g\eta)=\co(\g)+\Ad\rho(\g)\co(\eta)\quad\text{for all}\quad \gamma,\eta\in\pi_1(S).$$
Cocycles for $\Ad\rho$ are in one-to-one correspondence with Affine representations with linear part $\Ad\rho$. Specifically, for $\co:\pi_1(S)\to\sl(2,\C)$, 
the map $F_{\rho,\co}:\pi_1(S)\to\mathrm{Aff}(\sl(2,\C))$ given by
$$F_{(\rho,\co)}(\g)=(\Ad\rho(\g),\co(\g))\quad \mbox{for all} \quad \g\in\pi_1(S)$$
is an affine representation if and only if $u$ is a cocycle for $\Ad\rho$. 

Suppose that $[\rho]\in QF(S)$ and $v\in \sf T_{[\rho]} QF(S)$, then we choose a vector
$\tilde v\in  \sf T_{\rho} \mathrm{Hom}(\pi_1(S),\PSL(2,\CC))$ which projects to $v\in  \sf  T_{[\rho]} QF(S)$.
If $\gamma\in \pi_1(S)$, then the function $A_\gamma:\mathrm{Hom}(\pi_1(S),\PSL(2,\CC))\to\PSL(2,\C)$ given by $A_\gamma(\rho)=\rho(\gamma)$
is a complex analytic map. We define 
$$\co_{\tilde v}:\pi_1(S)\to\sl(2,\C)\quad\text{by letting}\quad \co_{\tilde v}(\gamma)=dA_\gamma(\tilde v) A_\gamma(\tilde\rho)^{-1}$$
for all $\gamma\in\pi_1(S)$, which is a cocycle for $\Ad\rho$. Cocycles for $\Ad\rho$ are in one-to-one correspondence
with tangent vectors in $ \sf T_\rho \mathrm{Hom}(\pi_1(S),\PSL(2,\CC))$. Different choices of representatives of $[\rho]$ and/or $\tilde v$ will produce
representations conjugate to $F_{(\rho,\co_{\tilde v})}$.

Kassel and Smilga \cite{KS}, see also Ghosh \cite{Ghoshdef}, developed a properness criterion for
affine actions which is expressed in terms of the Margulis invariant spectrum, see Proposition \ref{notzero}.
Theorem \ref{Main Theorem 4} will allow us to control the Margulis invariant spectrum of 
$(\rho,\co_w)$ when $\rho$ and $w$ satisfy the assumptions of the theorem. 
See Section \ref{affine actions} for details.

\medskip

In order to obtain the explicit neighborhoods of the Fuchsian locus that we promised earlier we recall a numerical
invariant which we can use to guarantee that $\Omega_\pm$ is moderately bent.
If $\beta_\nu$ is a bending lamination and $L>0$, we let $\|\beta_\nu\|_L$ be the supremum of the measure of any half-open arc of length less than $L$. 
The invariant $\|\beta_\nu\|_L$ is sometimes called the $L$-roundness of $\beta$ and was originally introduced by
Epstein, Marden and Markovic \cite{EMM}.

We use the techniques of Bridgeman-Canary-Yarmola \cite{BCY} to show that there exist bounds on the $L$-roundness of $\beta_\nu$
which guarantee that $\Omega_\nu$ is moderately bent.
We define  a threshold function  $r:\R_+ \rightarrow \R_+$ such that if $\|\beta_\nu\|_L < r(L)$ the  region $\Omega_\nu$ is moderately bent.
On $(0,1]$, $r$ is the inverse of the function $y =x\sec(x)$ and for $x > 1$, $r(x) =x\sech(x)$. 
A simple analysis of the function $x\sech(x)$ shows that $r(x) \geq x\sech(x)$ for all $x$.

\begin{theorem}
\label{Main Theorem 5}
Let $\rho\in \QF(S)$ have bending laminations $\beta_+$ and $\beta_-$.  If 
$$\|\beta_\nu\|_L<r(L)\quad\text{for some}\quad L>0,$$
then $\Omega_\nu(\rho)$ is moderately bent.
\end{theorem}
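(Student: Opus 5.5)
We will prove Theorem \ref{Main Theorem 5} by working with the bending map of the convex hull boundary and the estimates of Bridgeman--Canary--Yarmola \cite{BCY}. Normalize so that $\Omega=\Omega_\nu(\rho)$ and let $C=\partial\Omega$. Let $(x,y)$ be a bending pair, so there is a maximal round disk $D\subset\Omega$ with $x,y\in\partial D\cap C$. The boundary of the hyperbolic half-space bounded by $\partial D$ is a support plane $P$ of $\partial CH_\nu(\rho)$, and $x,y$ are endpoints of a geodesic $g_{xy}$ in $P$. This geodesic either is a leaf of $\beta_\nu$, or lies in the closure of a flat piece (a region of $P\cap\partial CH_\nu$). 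Our aim is to construct a circle $L$ through $x$ and $y$ with $L\cap C=\{x,y\}$ and $L$ transverse to $C$.

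The plan is to take $L$ to be a circle through $x,y$ making a fixed angle with $\partial D$, that is, the boundary at infinity of a totally geodesic plane $Q$ containing $g_{xy}$ and tilted by an angle $\theta$ away from $P$. We must show that $Q$ meets $\partial CH_\nu$ only along $g_{xy}$ (up to the endpoints), and that $Q$ crosses the convex hull boundary rather than merely touching it. Then $L=\partial_\infty Q$ meets $C$ exactly in $\{x,y\}$, and transversality follows because near $x$ and $y$ the arcs of $L$ lie on opposite sides of $C$.

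To control $\partial CH_\nu$ relative to $Q$, we parametrize the convex hull boundary along a geodesic ray $\alpha$ in $\partial CH_\nu$ emanating orthogonally from $g_{xy}$. The roundness bound $\|\beta_\nu\|_L<r(L)$ says that every segment of length $L$ along $\alpha$ carries bending less than $r(L)$. Following \cite{BCY}, we compare the image of $\alpha$ in $\Hs$ with a piecewise geodesic path whose bending is concentrated, one chunk of angle at most $r(L)$ per length $L$. The estimate we need is that such a path, started tangent to $P$ on the $\Omega$ side, stays strictly on one side of a tilted plane $Q$. It must never return to hit $Q$, and its endpoint at infinity must avoid $\partial_\infty Q\setminus\{x,y\}$.

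For short $L\le1$ the relevant comparison is an angle-sum estimate. A bending of total angle $\theta$ spread over length $L$ keeps the curve within the region bounded by a plane at angle $\theta\sec$-type distance, which is where the inverse of $x\sec x$ comes from. For $L>1$ the estimate $\theta<L\,\sech L$ arises from requiring that the exponential divergence of geodesics over length $L$ dominate the bending; this is the mechanism in the BCY proof that the bending map stays a quasi-geodesic. We expect the main obstacle to be this step: showing that a piecewise geodesic with bending bounded by $r(L)$ per length $L$ never crosses the plane $Q$ tilted by a suitable $\theta$. We would first reduce to the worst case, where all bending is concentrated at equally spaced points and applied in the same direction, which is planar. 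We would then do the computation in $\Hp$, showing by induction on the number of bends that the angle between the path and the horocyclic/equidistant direction toward $Q$ stays below $\pi/2$. Finally, letting $\alpha$ range over all rays from $g_{xy}$ and using continuity and equivariance, we obtain $Q\cap\partial CH_\nu=g_{xy}$, hence $L\cap C=\{x,y\}$ with transverse crossings.
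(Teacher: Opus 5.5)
Your geometric target is sound. Normalize so that $(x,y)=(0,\infty)$ and $\partial D=\R$. It then suffices to show that the plane through $g_{xy}$ orthogonal to the support plane separates the images of the two intrinsic half-planes of $\partial CH_\nu$ bounded by $g_{xy}$. Its boundary circle, the imaginary axis, is the desired $L$, and it is exactly the circle the paper uses; you leave the tilt unspecified, and orthogonal to $\partial D$ is a choice that works.

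The gap is the step you flag as the main obstacle, which is where all the content of $\|\beta_\nu\|_L<r(L)$ lies, and your plan for it does not hold up. The reduction to a planar configuration with equally spaced, co-directional bends is only asserted. The bending lines cross an intrinsic ray at varying angles and the bends are rotations about non-parallel axes. You give no comparison principle showing such a configuration is extremal under the constraint that every half-open arc of length $L$ carries bending less than $r(L)$. Moreover, the quantity you propagate outward from $g_{xy}$ is the angle $\phi$ between the path and the normal to the equidistant surfaces of $Q$. Between bends it satisfies $\phi'=-\tanh(s)\sin\phi$, where $s$ is the distance to $Q$. This restoring factor vanishes at the start, so bending near $g_{xy}$ is not compensated, and nothing in your sketch shows this bookkeeping produces the threshold $r(L)$. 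Your heuristics for $x\sec x$ and $L\,\sech L$ do not derive those constants.

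The missing idea is to control an angle measured from a point rather than from a plane. For $p\neq q$ in $\Hp$ the paper uses $\theta_\mu(p,q)$: the angle at $f_\mu(q)$ between the $\Hs$-chord from $f_\mu(p)$ and the one-sided tangent of $f_\mu\circ\alpha$, where $\alpha$ is the intrinsic geodesic from $p$ to $q$.
\begin{itemize}
\item Between bending lines it obeys $\theta'=-\sin\theta/\tanh s\le-\sin\theta$ \cite[Lemma 4.4]{EMM}, with $s$ the distance from $f_\mu(p)$, so the restoring factor is at least $1$ throughout. At a bending line it jumps up by at most the atom's weight.
\item Compare with the hill function $h(t)=\cos^{-1}(\tanh t)$, which satisfies $h'=-\sin h$. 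Set $h(a)=\theta-r_L(\theta)$ and take the window of length $L$ starting at the last time before $\theta$ is reached at which $\theta^-\le h(a)$. On it the downward drift is at least $L\sin h(a)=r_L(\theta)$, while the upward jumps total less than $r_L(\theta)$. Hence the angle never reaches $\theta$ when $\|\mu\|_L<r_L(\theta)$.
\item So $r_L(\theta)$ solves $x=L\sin(\theta-x)$, which at $\theta=\pi/2$ reads $L=x\sec x$. For $L>1$ the same argument with $b=L+h^{-1}(\theta)$ gives $L\,\sech L$.
\item Since $r_L$ increases in $\theta$, the hypothesis gives $\theta$-boundedness for some $\theta<\pi/2$. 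This is first proved for finite-leaved approximants with the same roundness, then passed to the limit.
\end{itemize}
Moderate bending is then read off at a point $p\in g_{xy}$, with the base point taken far out along an intrinsic geodesic toward $u\in C\setminus\{x,y\}$. In the limit, the direction from $p$ to $u$ is within $\theta<\pi/2$ of the continuation of the opposite face across $g_{xy}$. Because $u$ lies on the hull side of the support plane and the bending along $g_{xy}$ is less than $\pi/2$, this places $u$ strictly on its own side of $L$.
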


Our promised neighborhoods now have the following concrete form.

\begin{corollary}
\label{neighborhoods}
Let 
$$U(S)=\{[\rho]\in \QF(S): \|\beta_+\|_{L_1}<r(L_1)\ \ \text{for some}\ \ L_1>0\ \ \text{or}\quad  \|\beta_-\|_{L_2}<r(L_2)\ \ \text{for some}\ \ L_2>0\}.$$
and
$$V(S)=\{[\rho]\in \QF(S): \|\beta_+\|_{L_1}<r(L_1)\ \ \text{for some}\ \ L_1>0\ \ \text{and}\quad  \|\beta_-\|_{L_2}<r(L_2)\ \ \text{for some}\ \ L_2>0\}.$$
\begin{enumerate}
\item
$U(S)$ is an open neighborhood of the Fuchsian locus, so that if $[\rho]\in U(S)$ is not fuchsian, then $[\rho]$ is not a critical point
of the entropy function $h$.
\item
$V(S)$ is an open neighborhood of the Fuchsian locus, so that if $[\rho]\in V(S)$ is not fuchsian, then $\Ad\rho$ is the linear part
of a proper affine action of $\pi_1(S)$ on $\sl(2,\C)$.
\end{enumerate}
\end{corollary}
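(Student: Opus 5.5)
The corollary reduces to Theorems \ref{Main Theorem 2}, \ref{Main Theorem 3} and \ref{Main Theorem 5} once we prove two things. First, $U(S)$ and $V(S)$ are open. Second, they contain the Fuchsian locus.

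The dynamical conclusions are immediate. If $[\rho]\in U(S)$, then for some $\nu$ and some $L>0$ we have $\|\beta_\nu\|_L<r(L)$. Theorem \ref{Main Theorem 5} then says $\Omega_\nu(\rho)$ is moderately bent. If in addition $[\rho]$ is not fuchsian, Theorem \ref{Main Theorem 2} shows it is not a critical point of $h$. Likewise, if $[\rho]\in V(S)$, both $\Omega_+(\rho)$ and $\Omega_-(\rho)$ are moderately bent, and Theorem \ref{Main Theorem 3} gives the proper affine action with linear part $\Ad\rho$.

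Containment of the Fuchsian locus is easy. At a fuchsian $\rho$ the convex core is a totally geodesic surface and both bending laminations are zero. Hence $\|\beta_\pm\|_L=0<r(L)$ for every $L>0$, since $r$ takes values in $\R_+$.

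Openness is the only real content. For fixed $L$, the condition $\|\beta_\nu\|_L<r(L)$ is an open condition if the map $[\rho]\mapsto\|\beta_\nu(\rho)\|_L$ is upper semicontinuous on $\QF(S)$. Note that $U(S)$ and $V(S)$ are finite unions and intersections of unions over $L$ of such sets, so this suffices.

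To prove upper semicontinuity, argue by contradiction. Take $\rho_n\to\rho$ and half-open arcs $\alpha_n$ on $\partial C_\nu(\rho_n)$ of length $<L$ with bending measure at least $c>\|\beta_\nu(\rho)\|_L$. Two continuity facts are needed:
\begin{itemize}
\item the boundary components $\partial CH_\nu(\rho_n)$ converge to $\partial CH_\nu(\rho)$ geometrically, since the limit sets converge in the Hausdorff topology;
\item the bending measures converge weak-$*$ to that of $\rho$ (Keen--Series continuity of the bending lamination on $\QF(S)$, or the Epstein--Marden description).
\end{itemize}
After normalizing by $\rho_n(\pi_1(S))$ so that the arcs lie in a compact set, pass to a subsequence. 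The arcs then converge to a closed arc of length $\le L$ on $\partial CH_\nu(\rho)$.

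This is the main obstacle. Bending measure can concentrate at an endpoint, and the length bound degrades from $<L$ to $\le L$. I would handle it in two steps.
\begin{itemize}
\item Replace $L$ by a slightly larger $L'$ with $\|\beta_\nu\|_{L'}<r(L')$. This is possible because $\|\beta_\nu\|_{L}$ is monotone and right-continuity can be arranged, and $r$ is continuous.
\item Use upper semicontinuity of weak-$*$ limits on closed sets. The measure of the limiting closed arc, which sits inside a half-open arc of length $<L'$, is at least $c$.
\end{itemize}
This contradicts the choice of $c$ and gives openness.

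An alternative, if semicontinuity proves delicate, is to show openness of the moderately bent condition directly. That route looks harder, so I would pursue the roundness argument above.
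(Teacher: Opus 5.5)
Your reduction is the paper's: Theorem \ref{Main Theorem 5} combined with Theorems \ref{Main Theorem 2} and \ref{Main Theorem 3}, zero bending on the Fuchsian locus, and openness from Keen--Series continuity of $[\rho]\mapsto(X^\nu(\rho),\beta_\nu)$ plus semicontinuity of the roundness. The problem is in the openness step.

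The fixed-$L$ upper semicontinuity you set out to prove is false. For example, take $\beta_\nu(\rho)=w\gamma$ with $\gamma$ a simple closed curve whose lifts have minimal distance exactly $L$ in $X^\nu(\rho)$. No half-open arc of length $<L$ meets two lifts, so $\|\beta_\nu(\rho)\|_L=w$. For nearby $\rho_n$ on which that minimal distance has dropped below $L$, one gets $\|\beta_\nu(\rho_n)\|_L\approx 2w$. So openness cannot be proved $L$ by $L$; you must use the freedom to change $L$.

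Your two-step repair does not close the argument as written, for two reasons.
\begin{enumerate}
\item The limiting closed arc only gives $c\le\|\beta_\nu(\rho)\|_{L'}$. This contradicts $c>\|\beta_\nu(\rho)\|_L$ only if $\|\beta_\nu(\rho)\|_{L'}<c$. The inequality $\|\beta_\nu(\rho)\|_{L'}<r(L')$ that you arrange is not the relevant one.
\item $r$ is not continuous. We have $r(1)\approx .739$, but $r(L)=L\sech L\le .663$ for every $L>1$. So if $L=1$ and $\|\beta_\nu\|_1\in(.663,.739)$, no larger $L'$ with $\|\beta_\nu\|_{L'}<r(L')$ exists.
\end{enumerate}

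The fix is to move $L$ down rather than up. The function $r$ is continuous on $(0,1]$ and on $(1,\infty)$, so it is left-continuous everywhere. From $\|\beta_\nu(\rho)\|_L<r(L)$ you can therefore choose $L_0<L$ with $\|\beta_\nu(\rho)\|_L<r(L_0)$.

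Now run your compactness argument with arcs of length $<L_0$ on $\partial C_\nu(\rho_n)$. They subconverge to a closed arc of length $\le L_0<L$. That closed arc sits inside a half-open arc of length $<L$. Upper semicontinuity of weak-$*$ limits on closed sets then gives
$\limsup_n\|\beta_\nu(\rho_n)\|_{L_0}\le\|\beta_\nu(\rho)\|_L<r(L_0)$.
Hence $\|\beta_\nu(\rho_n)\|_{L_0}<r(L_0)$ for large $n$. This proves $U(S)$ and $V(S)$ are open without any right-continuity. Right-continuity of $L\mapsto\|\beta_\nu(\rho)\|_L$ could be arranged by first sliding $L$ down to a continuity point, but once you slide down you no longer need it.

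For comparison, the paper rewrites $\|\cdot\|_L$ as a supremum over open arcs of length $L$ and then simply asserts that it is continuous on $\mathcal T(S)\times\mathcal{ML}(S)$. Your suspicion about the endpoints is justified, and the downward shift in $L$ is what makes that step rigorous.
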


We now discuss the explicit constants we obtain. 
For example, 
$$r(1) \approx .739$$
For comparison, it follows from \cite[Thm. 4.1]{BCY} that for any $\rho\in \QF(S)$ one has
$$\|\beta_\pm\|_L\le 2\cos^{-1}\left(-\sinh(L/2)\right).$$
In particular,
$$\|\beta_\pm\|_1\le 2\cos^{-1}(-\sinh(1/2))\approx 4.2379.$$
To consider how close $r(L)$ is to being maximal, we describe the following elementary example. 
Take a horocycle $H$ in a hyperbolic plane $P \subseteq \Hs$ and let $\gamma:\R\rightarrow P$ be a piecewise geodesic with vertices on $H$ 
and each edge of length $L$. We then let $C$ be the convex pleated plane containing $\gamma$ and perpendicular to $P$. 
Then a simple calculation gives that $C$ has bending lamination $\beta$ with 
$$\|\beta\|_L = 2\sin^{-1}\left(\tanh(L/2)\right).$$
It follows that  there are non-embedded convex pleated planes with
$ \|\beta\|_1$ arbitrarily close to   $2\sin^{-1}\left(\tanh(1/2)\right)\simeq .9607.$

In Section \ref{classical} we discuss other bounds guaranteeing that our domains of discontinuity are moderately bent.
These bounds are in terms of the Schwarzian derivative, the Teichm\"uller distance between the two boundary components
and the quasiconformal distortion of $\Lambda(\rho)$.

\medskip

One may also obtain the following applications to more general complex Lie groups. If $\sf G$ is a complex simple Lie group and $\Gamma$ is a finitely generated group,
let $\mathcal C(\G,\sf G)$ be the space of pairs $(\rho,\co)$ where $\rho:\Gamma\to \sf G$ is a representation and $\co$ is a cocycle for $\Ad\rho$. 
We give $\mathcal C(\G,\sf G)$
the compact-open topology.
If $\rho$ is a smooth point of $\Hom(\pi_1(S),\sf G)$, then the space of cocycles for $\Ad\rho$ is canonically identified with $ \sf T_\rho\Hom(\pi_1(S),\sf G)$. In general,
the space of cocycles for $\Ad\rho$ is identified with 
Zariski tangent space of $\Hom(\pi_1(S),\sf G)$ at $\rho$ (see \cite{goldman-symplectic}).

\begin{corollary}\label{complexgroups} Let $\sf G$ be a complex simple Lie group with Lie algebra $\frak g$.
\begin{itemize}
\item[i)] The subset of $\mathcal C(\pi_1(S),\sf G)$ consisting of pairs $(\rho,\co)$ such that $F_{(\rho,\co)}(\pi_1(S))$ acts properly discontinuously on 
$\frak g$ has non-empty interior.
\item[ii)] The set of representations of  $\pi_1(S)$ into $\sf G\times\sf G$ that are discrete and faithful in each factor, and whose action on 
$\sf G$ via left/right multiplication is proper, has non-empty interior.
Moreover,  its closure contains product representations $\rho\times\rho$ where $\rho$ is totally Anosov.
\end{itemize}
\end{corollary}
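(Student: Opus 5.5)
We deduce both parts from the $\PSL(2,\C)$ case, Theorem \ref{Main Theorem 3} together with Corollary \ref{neighborhoods}, by composing with a principal embedding, and then using openness of the Kassel--Smilga properness criterion.

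\textbf{Part i), construction.} Let $\tau:\PSL(2,\C)\to\sf G$ (or from $\SL(2,\C)$, as appropriate) be the principal embedding. Fix a non-fuchsian $[\rho]\in V(S)$ and the cocycle $\co_w$, $w=w_+(\rho)+w_-(\rho)$, given by Theorem \ref{Main Theorem 4}. Consider $\tau\circ\rho$ and the pushed-forward cocycle $d\tau\circ\co_w$, which is a cocycle for $\Ad(\tau\circ\rho)$ on $\frak g$.

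\textbf{Part i), Margulis invariant.} Decompose $\frak g$ under $\Ad\circ\tau$ into irreducible $\sl(2,\C)$-modules $V_{2k}$, where $k$ runs over the exponents of $\sf G$. The vector $d\tau(\co_w(\g))$ lies in the $V_2$ summand, which is $d\tau(\sl(2,\C))$. The Margulis invariant relevant to the Kassel--Smilga criterion (Proposition \ref{notzero}) is the projection onto the zero weight space of $\Ad(\tau\rho(\g))$ of the neutral-Cartan direction. For the principal embedding this is the pairing of $d\tau(\co_w(\g))$ with $d\tau$ of the infinitesimal generator of the axis of $\rho(\g)$. This is a positive multiple of $d\ell_\g(w)$ and of its complex analogue. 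Theorem \ref{Main Theorem 4} gives $\dd\ell_\g(w)\le -K\ell_\g(\rho)$, so the Margulis invariant spectrum of $(\tau\rho,d\tau\circ\co_w)$ is uniformly bounded away from zero relative to the Cartan/Jordan projection, and the criterion yields properness. I expect the main obstacle to be matching the normalizations in Proposition \ref{notzero} to the principal $\sl_2$, namely checking that the higher modules $V_{2k}$ with $k>1$ contribute nothing to the relevant projection, and that the $\rho$-Cartan projection controls the $\sf G$-Cartan projection. The latter holds because $\tau\circ\rho$ is Borel-Anosov.

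\textbf{Part i), openness.} The Kassel--Smilga criterion is an open condition: uniform negativity of the normalized Margulis invariants, with $\rho$ Anosov, is stable under small perturbation of $(\rho,\co)$. This follows from the continuity of Margulis invariant spectra in $(\rho,\co)$ for Anosov $\rho$, proved via compactness of the space of geodesic currents or equivariant maps. Hence the set of proper pairs contains an open set.

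\textbf{Part ii).} Use the standard deformation argument of Danciger--Gu\'eritaud--Kassel. Take the path $\rho_t=\tau\circ\rho\circ\exp(t\,\co)$, or more precisely a path of representations $\rho_t$ with derivative $\co$ at $t=0$, and set $\rho^\pm_t=\rho_{\pm t}$. For small $t\ne 0$, the difference of Cartan projections of $\rho_t^+(\g)$ and $\rho_t^-(\g)$ is approximately $2t$ times the Margulis invariant, so it grows linearly in word length. By Benoist--Kobayashi properness, the left/right action of $(\rho_t^+,\rho_t^-)$ on $\sf G$ is then proper. Anosov-ness is open and the Cartan-difference condition is open, so we obtain an open set. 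This set accumulates at $\tau\rho\times\tau\rho$, and $\tau\rho$ is Borel-Anosov, i.e.\ totally Anosov. Each factor is discrete and faithful, being a small deformation of an Anosov representation.
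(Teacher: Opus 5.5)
\textbf{Part i)} is essentially the paper's argument in Theorem \ref{complexgroups1}. You push the bending cocycle forward by the principal embedding, note that the normalized Margulis spectrum is preserved, and conclude with continuity of $\M$ (Proposition \ref{continuous}) and the Kassel--Smilga criterion. The obstacle you anticipate there is vacuous: $\dd\tau(\co(\gamma))$ lies in $\dd\tau(\sl(2,\C))$, and $\pi^0_{\tau(g)}\circ \dd\tau=\dd\tau\circ\pi^0_g$, so the Margulis invariant of the pushed-forward pair is just $\dd\tau$ of the original one.

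\textbf{Part ii)} has a genuine gap. The sentence ``the difference of Cartan projections of $\rho^+_t(\gamma)$ and $\rho^-_t(\gamma)$ is approximately $2t$ times the Margulis invariant, so it grows linearly in word length'' carries the whole content of Proposition \ref{GxG}, and it does not follow from what you have.

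First, for each fixed $\gamma$, Proposition \ref{eigenvaluevariation} gives $\varpi_1(\mu(\rho_{\pm t}(\gamma)))=\varpi_1(\mu(\rho(\gamma)))\pm t\,\dd\varpi_1^\gamma(\co)+O_\gamma(t^2)$. The remainder depends on $\gamma$, so you cannot choose one $t>0$ that works for all of $\pi_1(S)$. For that you need a bound $Kt^2\,\varpi_1(\mu(\rho(\gamma)))$ with $K$ independent of $\gamma$. The paper gets exactly this from Ledrappier potentials. Analyticity of the thermodynamic mapping gives $\|f_t-f_0-t\dot f_0\|_\infty\le Kt^2$, and integrating over the periodic orbit of $[\gamma]$, whose period is $\varpi_1(\mu(\rho(\gamma)))$, yields $\varpi_1(\mu(\rho(\gamma)))\ge b_t\,\varpi_1(\mu(\rho_t(\gamma)))$ with $b_t>1$.

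Second, the Margulis invariant controls the \emph{Jordan} projection, whereas Benoist--Kobayashi concerns \emph{Cartan} projections. These are not uniformly close on $\Gamma$: conjugates $\alpha\beta\alpha^{-1}$ have the Jordan projection of $\beta$ but large Cartan projection. The paper bridges this with Benoist's finite set $A$ making $\alpha\gamma$ uniformly $(\Delta,\epsilon)$-proximal, for which $\mu-\kappa$ is bounded, together with subadditivity of $\varpi_1\circ\kappa$, and only then applies Corollary \ref{crit}.

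Third, ``the Cartan-difference condition is open'' is asserted, not proved. In the paper, openness comes from running all these estimates with constants uniform over a neighborhood $V$ of $(\rho_0,v_0)$ in $\mathsf T U$. This is why it needs $\tau\circ\rho$ to be a smooth point of $\Hom(\pi_1(S),\sf G)$ (Lemma \ref{smoothness}, via Kostant's centralizer computation), a step your proposal omits.

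The first point has a cheaper repair within your own framework: apply Proposition \ref{continuous} along the whole path. Set $\co_s(\gamma)=\frac{d}{ds}\rho_s(\gamma)\rho_s(\gamma)^{-1}$. Continuity gives $\varpi_1(\Re\,\M(\rho_s,\co_s))\subset(-\infty,-c/2]$ for $|s|<\delta$. By Proposition \ref{eigenvaluevariation}, $\frac{d}{ds}\log\varpi_1(\mu(\rho_s(\gamma)))\le -c/2$ for every $\gamma$, and integrating from $-t$ to $t$ gives $\varpi_1(\mu(\rho_t(\gamma)))\le e^{-ct}\,\varpi_1(\mu(\rho_{-t}(\gamma)))$. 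You would still need the Jordan-to-Cartan step, and a version uniform over a neighborhood in $\mathsf T U$ to get openness.
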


The key tool in the proof of (i)  is the fact that the normalized
Margulis spectrum varies continuously (see Sambarino \cite[Lemma 2.34]{sambarino}). 
The proof of (ii) makes crucial use of a properness
criterion of Benoist \cite{BenoistK-criterion} and Kobayashi \cite{BKobayashi-criterion}.
Danciger, Gu\'eritaud and Kassel \cite[Prop. 1.8]{DGK-aff} previously established part (ii) of Corollary \ref{complexgroups} in the case
when $\sf G=\PSL(2,\C)$.

\medskip\noindent
{\bf Historical background:}
Auslander \cite{auslander} conjectured that if $\Gamma$ acts properly discontinuously and cocompactly on $\R^n$ by affine transformations,
then $\Gamma$ is virtually solvable. The conjecture remains open, but has been established when $n=3$ by Fried-Goldman \cite{fried-goldman},
when $n=4 $ and $n=5$ by Tomanov \cite{tomanov} and when $n=6$  by  Abels, Margulis, and Soifer \cite{AMS}.
Milnor \cite{milnor} asked whether, in analogy with the Bieberbach theorems, something similar might be true for actions which are not cocompact.
Margulis \cite{Mar1,Mar2} produced the first examples of proper affine actions by non-abelian free groups on $\R^3$, which
are now called Margulis space-times.
Drumm \cite{Drumm}, Charette-Drumm-Goldman \cite{CDG-2gen},  and Drumm-Goldman \cite{DG-crooked} introduced a geometric viewpoint
on Margulis' construction and produced large classes of new examples. Goldman, Labourie and Margulis  \cite{GLM} gave 
an exact criterion for when affine actions of free groups on $\R^3$ are proper (see Ghosh-Treib \cite{ghosh-treib} for extensions to $\mathbb R^{2n+1}$).
Danciger, Gu\'eritaud and Kassel \cite{DGK1,DGK2}
gave a complete classification of Margulis space-times with convex cocompact linear part and showed that their quotients are all
homeomorphic to the interior of a handlebody.

Smilga \cite{Smig} extended  Goldman, Labourie and Margulis' properness criterion to any real semisimple  Lie group $\mathsf{G}$ of the non-compact type. He
constructed a proper affine  action of a non-abelian free group on the Lie algebra $\mathfrak g$ whose  linear part is  Zariski dense in
$\Ad(\mathsf{G})$. Burelle and Zager Korenjak \cite{BZ,neza} further analyzed higher-dimensional Margulis space times.

 It is natural to ask which other groups admit proper affine actions.
 Mess  \cite{Mess} showed that a closed surface group cannot have a proper affine action on $\R^3$ (see \cite{Mess}). 
 Danciger and Zhang  \cite{DZ} showed that a proper affine action of a surface group on $\mathbb R^d$ cannot have a linear part
 which is a Hitchin representation into $\mathsf{SL}(d,\mathbb R)$.
 (see also Labourie \cite{labourie-entropy}). In a major breakthrough,
 Danciger, Gu\'eritaud and Kassel \cite{DGK-aff} proved that any right-angled Coxeter group on $k$ generators admits a proper affine actions on 
 $\R^{k(k-1)/2}$.  They also show that every hyperbolic surface group admits a proper affine action on $\R^6$. 
 
 We briefly describe the  construction of  Danciger, Gu\'eritaud and Kassel in \cite{DGK-aff}  for producing proper 
 affine actions of hyperbolic surface groups  on $\mathfrak{sl}(2,\CC)= \R^6$ whose linear part is the adjoint action of the a quasifuchsian group.  
 A surface group of genus $g$ is an index 4 subgroup of the fuchsian Coxeter group which is the reflection group of a $(2g+2)$-gon all of whose 
 internal angles are $\frac{\pi}{2}$. They describe an explicit path in the deformation space of the Coxeter group within $\PSL(2,\C)$ and
 show that the nonfuchsian points on this path give rise to proper affine actions.
 
\medskip\noindent{\bf Acknowledgements:} The authors would like to thank Sourav Ghosh, Fanny Kassel and Curt McMullen for helpful
comments or conversations. 

\section{Background}

\subsection{Geodesic currents and measured laminations}
\label{currents}
Bonahon \cite{bonahon-currents} introduced the space  $\mathcal C(X)$ of geodesic currents  on a closed hyperbolic surface $X$
as a natural closure of the space of   weighted collections of  closed geodesics on $X$.
Let $G(\Hp)$ be the space of unoriented geodesics in $\Hp$.  
If $X = \Hp/\Gamma$ is a hyperbolic surface, a {\em geodesic current} on $X$ is a locally finite $\Gamma$-invariant measure on $G(\Hp)$. 
We endow the space $\mathcal C(X)$ of geodesic currents on $X$  with the weak$^*$ topology.
Any closed geodesic $\alpha$ (with weight one) on $X$ gives rise to a geodesic current by considering the measure which has an
atom of weight one on each geodesic in $\mathbb H^2$ which covers $\alpha$.

Bonahon \cite{bonahon-currents} showed that one may continuously extend the length function on geodesics to a length
function
$$L_X:\mathcal C(X)\to [0,\infty).$$
We give a brief description of his construction.
Let $\sf{PT}(\Hp)$ denote the projective tangent bundle of $\Hp$ and consider  the  trivial fiber bundle 
$\pi:\sf{PT}(\Hp) \rightarrow G(\Hp)$ mapping a project  tangent vector a point  to the associated unoriented geodesic through the point.
The fiber above a geodesic $g$ can be identified with $g$ and given its length measure $\ell$. So, if $\mu\in \mathcal C(X)$, then we
obtain a $\Gamma$-invariant measure on $\sf{PT}(\Hp)$ which is locally $\mu\times\ell$. This measure
descends to a measure $\widehat{\mu\times\ell}$ on $\sf{PT}(X)$. Bonahon defines $L_X(\mu)$ to be the total mass of
$\widehat{\mu\times\ell}$.

Given homotopy classes  $\alpha$ and $\beta$ of closed geodesics on $X$, one defines their
{\em geometric intersection number} $i(\alpha,\beta)$ to be the number of transverse intersection points of $\alpha$ and $\beta$.
Bonahon also showed that one may continuously extend the intersection number to a continuous function on $\mathcal C(X)\times\mathcal C(X)$.
If $\mu,\nu\in \mathcal C(X)$, then
then $\mu\times\nu$ gives a  measure on $G(\Hp)\times G(\Hp)$ invariant under the diagonal action of $\Gamma$. Letting 
$$D(\Hp) = \{(\vec u,\vec v) \in \sf{PT}_x(\Hp)\times \sf{PT}_x(\Hp)\ |\ x\in\mathbb H^2\text{ and } \vec u \neq \vec v\}$$
we can identify $D(\Hp)$ with a subset of $G(\Hp)\times G(\Hp)$ by mapping $(\vec u,\vec v)$ to the pair of geodesics tangent to them. Then
$$D(X)  = D(\Hp)/\Gamma =  \{(\vec u,\vec v) \in \sf{PT}_x(X)\times \sf{PT}_x(X)\ | \ x\in X\text{ and } \vec u \neq \vec v\}.$$
Since the pullback of the  measure $\mu\times\nu$ to $D(\Hp)$ is invariant under the action of $\Gamma$ it projects to
a measure $\widehat{\mu\times\nu}$ on $D(X)$. Bonahon defines
$i_X(\mu,\nu)$  to be the total mass of $\widehat{\mu\times\nu}$ on $D(X)$.

If $S$ is a closed topological surface of genus at least $2$, we may define the space $\mathcal C(S)$ as the space of 
$\pi_1(S)$-invariant locally finite measures on the space of unordered pairs of distinct points in the Gromov boundary
$\partial_\infty\pi_1(S)$ of $\pi_1(S)$. Given $(X,h)\in\mathcal T(S)$, there is a well-defined
$h_*$-equivariant homeomorphism
$\xi:\partial_\infty:\pi_1(S)\to \partial \mathbb H^2$, which gives rise to a canonical identification of $\mathcal C(S)$
to $\mathcal C(X)$. We notice that $L_X$ does not induce a well-defined function on $\mathcal C(S)$, since the length
function depends on the underlying hyperbolic structure. However, the definition of intersection number does not
depend on the choice of hyperbolic structure so we have a well-defined continuous function
$$i:\mathcal C(S)\times\mathcal C(S)\to [0,\infty)$$
where for any $(X,h)\in\mathcal T(S)$, we take  $i(\mu,\nu)=i_X(\xi_*(\mu),\xi_*(\nu))$.

Notice that a closed geodesic $\alpha$ on $X$ is simple if and only if $i(\alpha,\alpha)=0$. More generally, we may consider the 
space  $\mathcal{ML}(X)$ of measured geodesic laminations on $X$ to be the set of geodesic currents of self-intersection zero.
If $\mu \in \mathcal{ML}(X)$ then its support $\supp(\mu)$ is a closed set foliated by disjoint complete geodesics, i.e. a
geodesic lamination.
Any interval $\alpha$ transverse to the support, inherits a measure by taking the $\mu$-measure of all the geodesics transverse to $\alpha$. 
This gives the {\em transverse measure} associated to $\mu$. Since intersection number is independent of the hyperbolic structure,
$\mathcal{ML}(S)$ is also well-defined.

\subsection{The convex core and bending laminations}
If $\rho$ is  quasifuchsian, but not fuchsian, there exists an orientation-preserving  homeomorphism 
$$j_\rho:S\times [0,1]\to C(\rho)$$ in the homotopy class
of $\rho$. Thurston \cite{thurston-notes} showed that the intrinsic metric on each component of $\partial C(\rho)$ is hyperbolic. 
Moreover, there exist measured laminations $\beta_+$ and $\beta_-$ on $\partial C_+(\rho)=j_\rho(S\times \{1\})$ and 
$\partial C_-(\rho)=j_\rho(S\times \{0\})$ so that $\partial C_\pm(\rho)$ is totally geodesic on the complement of  the support $\supp(\beta_\pm)$ 
of $\beta_\pm$ and
each leaf of $\supp(\beta_\pm)$ is mapped to a geodesic. (See Epstein-Marden \cite{EM} for complete proofs.)

We now briefly describe  the transverse bending measures  given by $\beta_\nu$ for $\nu \in \{\pm\}$. For a complete description see Epstein-Marden \cite{EM}. 
It will be easiest to work in the universal cover. Let $\tilde\partial C_\nu(\rho)$ denote the pre-image of $\partial C_\nu(\rho)$ in $\partial CH(\rho)$.
A {\em support half-space} to $\tilde\partial C_\nu(\rho)$ at $x \in \partial C_\nu(\rho)$ is a half-space $H$ such that  the interior of $H$ 
is disjoint from the convex hull $CH(\rho)$ and $x\in\partial H$. The boundary $\partial H$ of a support half-plane, is called a {\em support plane}.
If $\alpha:[0,1]\rightarrow \tilde\partial C_\nu(\rho)$ is an arc  transverse to $\beta_\nu$ (so that $\alpha(0),\alpha(1)\notin\supp(\beta_\nu)$), 
$\mathcal P=\{0 = t_0 < t_1 <\ldots < t_n = 1\}$ 
is a partition of $[0,1]$ and $\mathcal H=\{H_{t_i}\}$ is a collection of support planes to
$\tilde\partial C_\nu(\rho)$ at $\alpha(t_i)$, we define
$$i_{\mathcal P,\mathcal H}(\alpha, \beta_\nu) = \sum_{i=0}^{n-1}\theta_{i}$$
where $\theta_i$ is the angle between $H_{t_i}$ and $H_{t_{i+1}}$.
For the definition we  need to restrict  to pairs $(\mathcal P,\mathcal H)$ with the property that $H_{t_i}$ intersects $H_{t_{i+1}}$. 
Furthermore it is natural to restrict to pairs $(\mathcal P,\mathcal H)$ such that if $t \in [t_i, t_{i+1}]$ and $H_t$ is a support half-space to $\alpha(t)$ then $H_t$ intersects both $H_{t_i}$ and $H_{t_{i+1}}$.

We define the bending measure, as the limit over any sequence $\{(\mathcal P_n,\mathcal H_n)\}$ so that the mesh size of $P_n$ tends to zero.  
Since the intersection number $i_{\mathcal P,\mathcal H}(\alpha, \beta_\nu)$ is monotonically decreasing under refining the partition, 
we can also define the bending measure for $\alpha$ to be 
$$i(\alpha, \beta_\nu)  = \inf_{\mathcal P, \mathcal H}i_{\mathcal P, \mathcal H}(\alpha, \beta_\nu).$$

\subsection{Complex length}
If $g$ and $h$ are oriented geodesics in $\mathbb H^3$ which do not share an endpoint, then one can define a 
(unsigned) {\em complex distance} $\sigma(g,h)$ between them. The real part of $\sigma(g,h)$ is simply the distance between
$g$ and $h$, the complex part is the angle between $g$ and the parallel translate of $h$ along the unique common
perpendicular joining $g$ to $h$ measured counterclockwise in the plane spanned by $g$ and the parallel translate of $h$. 
The unsigned complex distance naturally lies in $\mathbb C/2\pi i \mathbb{Z}$ and 
varies smoothly over the space of pairs of oriented geodesics.

Kouroniotis \cite[Lemma 2.1]{kouron} shows that the unsigned complex distance is determined by the relation
 $$\cosh\sigma(g,h) = \frac{[g_-, h_-, h_+,g_+]+ 1}{[g_-, h_-, h_+,g_+]-1}$$
 where $[\cdot,\cdot,\cdot,\cdot]$ is the  cross ratio on the Riemann sphere given by
 $$[u,p,q,v] = \frac{(u-q)(v-p)}{(u-p)(v-q)}.$$
 It follows, in particular, that $\cosh\sigma$ is a well-defined smooth complex valued function.  As this is the only function that appears in our calculations, 
 we don't need to consider evaluating $\sigma$ directly. See Series \cite{series} for a helpful discussion of unsigned complex distance. 
 
Notice that if  $(h_-, h_+)  = (0,\infty)$, and $(g_-,g_+)= (u,v)$, then
$$[g_-, h_-, h_+,g_+] = \frac{v}{u} \quad\text{and}\quad \cosh\sigma(g,h) = \frac{v+u}{v-u}.$$
The following formula for the imaginary part of $ \cosh\sigma(g,h)$ will be used later:
\begin{equation}
\label{Im-cr} \Im \Big(\cosh\sigma(g,h)\Big)= -\frac{2\Im [g_-, h_-, h_+,g_+]}{|[g_-, h_-, h_+,g_+]-1|^2}.
\end{equation}

One can define the {\em complex length} $\mathcal L(\gamma)$ of a hyperbolic isometry $\gamma$ of $\mathbb H^3$. The real part
of $\mathcal L(\gamma)$ is the translation length of $\gamma$ along its axis, while its imaginary part is its
rotational angle. So, the complex length $\mathcal L(\gamma)$ is only defined modulo $2\pi i \mathbb Z$.

If $z\in\mathbb C$ and $A(z)$ is the hyperbolic transformation $w\rightarrow e^z w$, then $\mathcal L(A(z))=z$. 
If $g$ is an oriented  geodesic in $\mathbb H^3$ and $t\in\mathbb C$, we let $Q$ be an isometry mapping the pair $(0,\infty)$ to the pair $(g_-,g_+)$
and set $R(g,z) = Q A(z) Q^{-1}$. Notice that $R(g,z)$ is well-defined, varies holomorphically in $z$ and  $\mathcal L(R(g,z))=z$.

\subsection{Bending deformations}

Kourouniotis \cite{kouron-cont} studied the behavior of bending deformation using the theory of complex measured laminations.
If $X$ is a hyperbolic structure on a closed surface, 
a {\em complex measured lamination} $\mu$ on $X$ is a complex-valued, locally finite measure on
$G(\Hp)$ whose support is a geodesic lamination.
The space $\mathcal{ML}_\CC(X)$ of complex measured lamination is topologized with  the weak$^*$ topology on measures. 
A complex measure lamination $\mu$ induces a complex measure on arcs transverse to $\supp(\mu)$.

We will be interested in the subspace $\mathcal{ML}^{++}(X)$ of $\mathcal{ML}_\CC(X)$ consisting of measures
which take values with non-negative real and imaginary parts. 
Notice also that $\mathcal{ML}^{++}(S)$ is well-defined.

We first define the bending deformation of a quasifuchsian representation $\rho$ along $\mu\in \mathcal{ML}^{++}(S)$ when the support of $\mu$ has finitely many leaves.
Since $X$ is a hyperbolic structure on $S$, $X=\mathbb H^2/\Gamma$ and this gives an identification of
$\partial_\infty\pi_1(S)$ with $\partial\mathbb H^2$. Consider the $\rho$-equivariant map $\xi_\rho:\partial \mathbb H^2\to\partial\mathbb H^3$.
The map $\xi$ induces a $\rho$-equivariant map $\xi_\rho^*:\mathcal G(\Hp)\to\mathcal G(\mathbb H^3)$ where $\mathcal G(\mathbb H^n)$ is the
space of oriented geodesics in $\mathbb H^n$. We fix a basepoint $x_0\in X$, in the complement of $\supp(\mu)$ and a preimage 
$\tilde x_0$ of $x_0$ in $\mathbb H^2$.
Given $\gamma\in \pi_1(X,x_0)$, 
let $m_1,\ldots, m_n$ be the geodesics in the support of $\tilde \mu$ intersecting $ [\tilde x_0, \gamma(\tilde x_0)]$ with  
atomic measures $a_1,\ldots a_m$.  We order the leaves from right to left and orient each $m_i$ so that they cross
$ [\tilde x_0, \gamma(\tilde x_0)]$ from left to right.
We then define, for all $z\in\mathbb C$,
$$\rho_{z\mu}(\gamma) =  R(\xi_*(m_1),za_1)R(\xi_*(m_2),za_2)\ldots R(\xi_*(m_{n-1},za_{m-1})R(\xi_*(m_n),za_n)\rho(\gamma).$$
Kouroniotis observes that $\rho_{z\mu}$ is a representation for all $z$ and is quasifuchsian  for $z$ lying in
an open neighborhood of $0$ in $\C$. 

Kouroniotis showed that if $\{\mu_n\}$ is a sequence of finite leaved complex measured laminations converging to $\mu\in \mathcal{ML}_\CC(X)$,
then $B_{z\mu_n}$ converges to $B_{z\mu}$ and we obtain a map
$$B_\mu: \QF(S)\times \CC\to \frak X(S)\ \ 
\text{ given by }\ \ 
B_\mu([\rho],z) = [\rho_{z\mu}]$$
which is holomorphic in $z$ (see \cite{kouron-cont}). 
The associated holomorphic bending vector field $T_\mu: \QF(S) \rightarrow T(\QF(S))$ is defined to be
$$T_\mu([\rho]) = \left.\frac{\partial }{\partial z}\right|_{z=0} B_\mu([\rho],z)$$
 
Kourouniotis further proved

\begin{theorem}{\rm (Kourouniotis \cite[Theorem  3]{kouron-cont})}
\label{continuity of derivative}
 The map $T:\mathcal{ML}^{++}(S)\times \QF(S)\rightarrow T(\QF(S))$ given by $T(\mu,[\rho]) = T_\mu([\rho])$ is continuous and  holomorphic in $[\rho]$. 
\label{kourounitis-cont}
\end{theorem}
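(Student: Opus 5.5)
The plan is to reduce everything to an explicit formula for the cocycle representing $T_\mu([\rho])$, first for finite-leaved $\mu$, and then pass to general $\mu\in\mathcal{ML}^{++}(S)$ by approximation.

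\textbf{Finite-leaved case.} Fix a hyperbolic structure $X=\mathbb H^2/\Gamma$ and a basepoint $\tilde x_0$ off $\supp(\tilde\mu)$. For an oriented geodesic $g$ in $\mathbb H^3$, let $\mathsf X_g\in\sl(2,\C)$ be the infinitesimal generator of $z\mapsto R(g,z)$, that is, $\frac{d}{dz}\big|_{z=0}R(g,z)=\mathsf X_g$. Concretely, $\mathsf X_g=\Ad(Q)\,\diag(\tfrac12,-\tfrac12)$, which depends holomorphically on the endpoints $(g_-,g_+)$. Differentiating the product defining $\rho_{z\mu}(\gamma)$ at $z=0$ gives the cocycle
$$\co_{\mu,\rho}(\gamma)=\sum_{i=1}^n a_i\,\mathsf X_{\xi_\rho^*(m_i)},$$
which represents $T_\mu([\rho])$. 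For $\mu$ finite-leaved, each leaf $m_i$ is a lift of a simple closed curve, so $\xi_\rho^*(m_i)$ is the axis of $\rho(\eta_i)$ for some $\eta_i\in\pi_1(S)$. The fixed points of a loxodromic element vary holomorphically with $\rho$. Hence $\co_{\mu,\rho}(\gamma)$ is holomorphic in $\rho$ on $\mathrm{Hom}(\pi_1(S),\PSL(2,\C))$ near any quasifuchsian $\rho$. Checking this for a generating set of $\gamma$ gives holomorphy of $T_\mu$ on $\QF(S)$; one still has to verify that a holomorphic family of cocycles projects to a holomorphic section of $T\QF(S)$, which holds because $\QF(S)$ consists of smooth points.

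\textbf{General $\mu$.} Write the same formula as an integral,
$$\co_{\mu,\rho}(\gamma)=\int_{\mathcal G_\gamma}\mathsf X_{\xi_\rho^*(m)}\,d\tilde\mu(m),$$
where $\mathcal G_\gamma\subset G(\mathbb H^2)$ is the compact set of geodesics crossing the arc $[\tilde x_0,\gamma\tilde x_0]$, each oriented so that it crosses from left to right. The integrand $(m,\rho)\mapsto \mathsf X_{\xi_\rho^*(m)}$ is jointly continuous on $\mathcal G_\gamma\times\QF(S)$. This holds because the boundary map $\xi_\rho$ varies continuously, indeed holomorphically by the $\lambda$-lemma, in $\rho$, uniformly on $\partial\mathbb H^2$. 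Then:
\begin{enumerate}
\item If $\mu_n\to\mu$ weak$^*$ with $\mu_n$ finite-leaved, $\rho_n\to\rho$, and $\tilde\mu$ gives no mass to the geodesics through the endpoints of the arc, then $\co_{\mu_n,\rho_n}(\gamma)\to\co_{\mu,\rho}(\gamma)$.
\item The same convergence holds locally uniformly in $\rho$.
\item A locally uniform limit of holomorphic maps is holomorphic, so $T_\mu$ is holomorphic for every $\mu$.
\item Joint continuity of $T$ follows from the joint continuity in (1).
\end{enumerate}
One must also check that this integral cocycle is indeed $\frac{\partial}{\partial z}\big|_{z=0}B_\mu$. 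This follows by combining the convergence $B_{z\mu_n}\to B_{z\mu}$, which is holomorphic in $z$, with Cauchy estimates to interchange the limit with the $z$-derivative.

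\textbf{Main obstacle.} I expect the hardest step to be the weak$^*$ convergence in (1) near the ends of the arc, together with uniformity in $\rho$. The difficulty is that the indicator of ``crossing $[\tilde x_0,\gamma\tilde x_0]$'' is not continuous on $G(\mathbb H^2)$. The approach I would try first is to perturb $\tilde x_0$ so that it lies in a complementary region of $\supp\mu$. Then there is a neighborhood of the endpoints carrying no $\tilde\mu$-mass, and weak$^*$ convergence gives $\tilde\mu_n\to\tilde\mu$ on $\mathcal G_\gamma$ up to errors that tend to $0$. The generators $\mathsf X_{\xi_\rho^*(m)}$ are bounded on $\mathcal G_\gamma$ uniformly for $\rho$ in a compact set, so these boundary errors contribute $o(1)$. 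Changing the basepoint only changes the cocycle by a coboundary, so the class $T_\mu([\rho])$ is unaffected.
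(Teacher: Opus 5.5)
The paper does not prove this statement. It imports it from Kourouniotis and uses it as a black box, so there is nothing in the text to match. Judged as a self-contained reconstruction, your sketch is correct.

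It runs on the same mechanism the paper uses to prove Theorem~\ref{derivative of complex length}. First, Kourouniotis's locally uniform convergence $B_{z\mu_n}\to B_{z\mu}$, combined with Cauchy estimates, passes the $z$-derivative to the limit. Second, weak$^*$ convergence is tested against an integrand that is continuous on a compact set carrying all the limiting mass. The paper arranges this with the angle cutoff $D_{\gamma,\theta}(X)$; you arrange it with a basepoint off $\supp\tilde\mu$. In both cases the only bad geodesics are those parallel to the arc or axis, where the left-to-right orientation is undefined.

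The real difference is that you work one level up. You handle the whole infinitesimal bending cocycle $\mathsf u_{\mu,\rho}(\gamma)=\int_{\mathcal G_\gamma}\mathsf X_{\xi_\rho^*(m)}\,d\tilde\mu(m)$ rather than a single complex-length function, and this buys something:
\begin{itemize}
\item One checks $2\,\mathrm{tr}(\mathsf X_g\mathsf X_h)=\cosh\sigma(g,h)$. With $h=(0,\infty)$ and $g=(u,v)$, both sides equal $(u+v)/(v-u)$.
\item The derivative of the complex length of $\rho(\gamma)$ in the direction of a cocycle $x$ is $2\,\mathrm{tr}\big(x(\gamma)\,\mathsf X_{a}\big)$, where $a$ is the axis of $\rho(\gamma)$ oriented from the repelling to the attracting fixed point. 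This pairing vanishes on coboundaries.
\item Hence placing $\tilde x_0$ on the axis of $\gamma$ (off $\supp\tilde\mu$) turns your formula directly into Theorem~\ref{derivative of complex length}, which your route recovers as a corollary.
\end{itemize}

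Citing buys brevity and outsources foundational inputs that your argument still borrows rather than proves. These are that every $\mu\in\mathcal{ML}^{++}(S)$ is a weak$^*$ limit of finite-leaved elements of $\mathcal{ML}^{++}(S)$, and that $B_{z\mu}$ is the corresponding limit. Holomorphy in $\rho$ alone does not need these approximants. The map $\rho\mapsto\mathsf X_{\xi_\rho^*(m)}$ is holomorphic for each $m$ and locally bounded, so you may differentiate under the integral sign. The approximants are needed only to identify your integral cocycle with $\partial_z|_{z=0}B_\mu$.

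Two small points to tidy.
\begin{itemize}
\item In step (1), split $\tilde\mu_n$ into real and imaginary parts and apply the portmanteau argument to each nonnegative part. For signed measures, weak$^*$ convergence does not control mass on continuity sets. The same positivity gives the uniform bound on $|\tilde\mu_n|(\mathcal G_\gamma)$ that you need to absorb $\rho_n\to\rho$. This is where the hypothesis $\mathcal{ML}^{++}$ enters.
\item State step (1) for arbitrary $\mu_n\in\mathcal{ML}^{++}(S)$, not only finite-leaved ones. You need this to get joint continuity at every point of $\mathcal{ML}^{++}(S)\times\QF(S)$, and its proof never uses finite-leavedness.
\end{itemize}
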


 \section{Variation of complex length}
 
In this section, we obtain our formula for the variation of the complex length function during a bending defornation along a measured
lamination $\mu$. Kouroniotis \cite{kouron} already obtained a formula 
when the support of $\mu$ has finitely many leaves.

\begin{theorem}{\rm (Kouroniotis \cite[Thm 4.1]{kouron})}
\label{finite derivative of complex length}
If $\rho$ is quasifuchsian, $\mu\in \mathcal{ML}(S)$ has finite-leaved support, $S\cong X=\mathbb H^2/\Gamma$,
$\gamma\in \pi_1(X,x_0)$, $x_0\notin\supp(\mu)$ and
$\mathcal L_\gamma(z)=\mathcal L(\rho_{z\mu}(\gamma))$, then
$$\mathcal L_\gamma'(0) = \sum_{j=1}^n a_j\cosh(\sigma((\xi_\rho)_*(a(\gamma)),(\xi_\rho)_*(m_j))$$
where $[\tilde x_0,\gamma(\tilde x_0)]$ intersects $\tilde\mu$ in the leaves $\{m_1,\ldots,m_n\}$ (ordered from right to left and oriented so they
cross $[\tilde x_0,\gamma(\tilde x_0)]$ from left to right) with atomic weights
$\{a_1,\ldots,a_n\}$ and $a(\gamma)$ is the axis of $\gamma$.
\end{theorem}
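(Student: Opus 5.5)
We will prove the formula by reducing, via the chain rule, to the derivative of the complex length under a one-parameter family obtained by multiplying $\rho(\gamma)$ by a product of elliptic/loxodromic elements fixing the leaves. We use the explicit definition
$$\rho_{z\mu}(\gamma)=R(\xi_*(m_1),za_1)\cdots R(\xi_*(m_n),za_n)\rho(\gamma),$$
which gives
$$\frac{d}{dz}\Big|_{z=0}\rho_{z\mu}(\gamma)\,\rho(\gamma)^{-1}=\sum_{j=1}^n a_j X_j,$$
where $X_j=\frac{d}{dz}|_{z=0}R(\xi_*(m_j),z)\in\sl(2,\C)$ is the infinitesimal generator of pure translation along $\xi_*(m_j)$. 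This step is a product rule computation, since each factor equals the identity at $z=0$. Next, the complex length satisfies $2\cosh(\mathcal L(A)/2)=\pm\mathrm{tr}(A)$. Differentiating gives
$$\sinh(\mathcal L/2)\,\mathcal L'(0)=\pm\,\mathrm{tr}\Big(\sum_j a_jX_j\rho(\gamma)\Big).$$
The formula is therefore linear in the $a_j$, and it suffices to compute a single term $\mathrm{tr}(X\,\rho(\gamma))$ for one geodesic $h$ and the loxodromic $\rho(\gamma)$.

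For the single-term computation we normalize by conjugation so that the axis of $\rho(\gamma)$ is $(0,\infty)$. Then $\rho(\gamma)=\diag(e^{\mathcal L/2},e^{-\mathcal L/2})$, and we let $h$ have endpoints $(u,v)$. The generator of translation along $h$ is the matrix
$$X=\frac{1}{v-u}\begin{pmatrix} (u+v)/2 & -uv\\ 1 & -(u+v)/2\end{pmatrix},$$
with the sign chosen so that the repelling point is $u$ and the attracting point is $v$. Computing the trace gives
$$\mathrm{tr}(X\rho(\gamma))=\frac{u+v}{2(v-u)}\big(e^{\mathcal L/2}-e^{-\mathcal L/2}\big)=\sinh(\mathcal L/2)\,\frac{u+v}{v-u}.$$
Dividing by $\sinh(\mathcal L/2)$ and using the identity recorded earlier (with the roles of $g,h$ as stated), the quotient $(u+v)/(v-u)$ is exactly $\cosh\sigma(a(\gamma),h)$. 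Summing over $j$ then yields the claimed formula.

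We expect the main obstacle to be bookkeeping of orientations and conventions rather than the computation itself. Three things must be checked: that the ordering of the leaves from right to left and the left-to-right orientation of the $m_j$ match the sign in $R(\cdot,z)$; that the sign choice in the trace formula is consistent; and that we land on $\cosh\sigma$ rather than $-\cosh\sigma$. Reversing the orientation of $h$ swaps $u$ and $v$ and flips the sign, so the stated orientation convention is what makes the formula well defined. We will verify this on the Fuchsian case with a single leaf, where pure twisting ($z$ real) must change length by $a\cos\theta$ (Wolpert's cosine formula). This check pins down all signs. Finally, independence from the basepoint $x_0$ and from conjugation follows from invariance of the trace and of $\cosh\sigma$ under isometries.
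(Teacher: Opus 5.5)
Your proposal is correct. The paper does not prove this statement itself. It is quoted from Kourouniotis \cite{kouron} and used only as the finite-leaved input to the limiting argument of Theorem~\ref{derivative of complex length}, so there is no in-paper argument to compare with.

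Your route is a direct, self-contained verification. The product rule reduces everything to the first-order trace identity $\frac{d}{dz}\big|_{z=0}\mathrm{tr}\big(R(h,z)A\big)=\sinh(\mathcal L(A)/2)\cosh\sigma(g,h)$, for $A$ loxodromic with oriented axis $g$, which you check in coordinates. The gain is that the sign conventions become transparent. Your generator $X$ is exactly $\frac{d}{dz}\big|_{z=0}QA(z)Q^{-1}$ for the paper's $Q$ with $Q(0)=h_-=u$ and $Q(\infty)=h_+=v$. So the orientation of each $m_j$ enters identically in $\rho_{z\mu}$ and in the formula, and the Wolpert check is only a sanity test, not a needed step. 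The ordering of the leaves is also irrelevant at first order, since the derivative is a sum. The obstacle you anticipate is therefore largely absent.

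A few details should be made explicit:
\begin{itemize}
\item \textbf{Normalization.} The identity recorded in the paper normalizes the leaf to $(0,\infty)$, while you normalize the axis. Either note that $[g_-,h_-,h_+,g_+]=[h_-,g_-,g_+,h_+]$, which is immediate from the cross-ratio formula, or compute $[0,u,v,\infty]=v/u$ directly; both give $(u+v)/(v-u)$.
\item \textbf{Endpoints.} You need $u,v\notin\{0,\infty\}$. The axis of $\gamma$ is not among the $m_j$: if it were a leaf, then $\tilde x_0\notin a(\gamma)$, and $\tilde x_0,\gamma\tilde x_0$ would lie strictly on the same side of it. Moreover, distinct lifts of closed geodesics on $X$ never share an endpoint.
\item \textbf{Sign of the trace.} Lift $\rho(\gamma)$ to $\diag(e^{\mathcal L/2},e^{-\mathcal L/2})$ and $R(h,z)$ to $Q\diag(e^{z/2},e^{-z/2})Q^{-1}$. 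Then the trace equals $+2\cosh(\mathcal L(z)/2)$ along a continuous branch of $\mathcal L(z)$. Also note $\sinh(\mathcal L/2)\neq 0$ because $\rho(\gamma)$ is loxodromic.
\item \textbf{Basepoint independence.} This is not part of the statement. If you want it, it does not follow from isometry invariance of $\cosh\sigma$ alone, since the set of crossed leaves changes with $x_0$. It follows instead from the fact that changing the basepoint conjugates $\rho_{z\mu}$.
\end{itemize}
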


We will use Theorem \ref{continuity of derivative} to extend Kourouniotis' formula to the general case.
We first re-interpret Kourouniotis' formula in a more measure-theoretic fashion. If
$\gamma\in \pi_1(X,x_0)$ is non-trivial, we let 
$$D_\gamma(X)=\{(\vec u,\vec v)\in D(X) : x\in\gamma^*,\ \vec u \text{ is tangent to } \gamma^*\text{ at } x, \text{ and } \vec u\ne\vec v\}.$$
If $(\vec u,\vec v)\in D_\gamma(X)$, we lift $\vec u$ and $\vec v$ to a pair $(\vec U,\vec V) \in \sf{PT}_{\tilde x}(\Hp)\times \sf{PT}_{\tilde x}(\Hp)$
with  $ \vec U$ tangent to the axis $a_U(\gamma)$ of $\gamma$ at  a lift $\tilde x$ of $x$. 
Then let $g_V$ be an oriented geodesic in $\mathbb H^2$ which has $\vec V$ as a tangent
vector and crosses $a_U(\gamma)$ from left to right.
We define a continuous function
$$\sigma_{\rho,\gamma}:D_\gamma(X)\to \mathbb C/2\pi i \mathbb Z$$
given by 
$$\sigma_{\rho,\gamma}(\vec u,\vec v)=\sigma\Big((\xi_\rho)_*(a_U(\gamma)),(\xi_\rho)_*(g_V)\Big).$$
If we regard $\gamma$ as a measured geodesic lamination with atomic measure, then the support of
the measure $\widehat{\gamma\times\mu}$ on $D(X)$ lies within $D_\gamma(X)$, so we may
view $\widehat{\gamma\times\mu}$ as a measure on $D_\gamma(X)$. Then notice that 
$$\mathcal L_\gamma'(0)=\int_{D_\gamma(X)} \cosh \sigma_{\rho,\gamma}\ d\widehat{(\gamma\times\mu)}.$$
Notice that this formula does not depend on the choice of $X$.

We use Kourouniotis' later work  \cite{kouron-cont} on the continuity of bending to extend this formula  to the general case.

\begin{theorem}
\label{derivative of complex length}
If $\rho$ is quasifuchsian, $\mu\in \mathcal{ML}^{++}(S)$, $S\cong X = \mathbb H^2/\Gamma$,  $\gamma\in \pi_1(X,x_0)$ and
$\mathcal L_\gamma(z)=\mathcal L(\rho_{z\mu}(\gamma))$, then
$$\mathcal L_\gamma'(0) = \int_{D_\gamma(X)}  \cosh \sigma_{\rho,\gamma}\ d\widehat{(\gamma\times\mu)}. $$ 
\end{theorem}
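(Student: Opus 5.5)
We prove the formula by approximation, extending the finite-leaved case (Theorem \ref{finite derivative of complex length}, in its measure-theoretic reformulation) to general $\mu\in\mathcal{ML}^{++}(S)$. Fix a hyperbolic structure $X=\mathbb H^2/\Gamma$. First choose a sequence $\{\mu_n\}$ of finite-leaved complex measured laminations in $\mathcal{ML}^{++}(X)$ converging weak$^*$ to $\mu$. For instance, approximate the real and imaginary parts separately by weighted simple closed multicurves, which are dense in $\mathcal{ML}(S)$. To make the limit behave well we should also control the supports. Take the approximating multicurves to lie in a small neighborhood of $\supp(\mu)$ (train-track approximation), so that they Hausdorff-converge into $\supp(\mu)$. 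We also take them to avoid $x_0$. For each $n$, Theorem \ref{finite derivative of complex length}, extended $\CC$-linearly to complex weights, gives
$$\frac{d}{dz}\Big|_{z=0}\mathcal L(\rho_{z\mu_n}(\gamma))=\int_{D_\gamma(X)}\cosh\sigma_{\rho,\gamma}\,d\widehat{(\gamma\times\mu_n)}.$$
The $\CC$-linear extension holds because Kourouniotis' computation is linear in the weights $a_j$ at $z=0$.

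\textbf{Left-hand side.} The left-hand side equals $d\mathcal L_\gamma(T_{\mu_n}([\rho]))$. Here $\mathcal L_\gamma$ is holomorphic near $[\rho]$; more precisely, $\mathcal L(\rho(\gamma))$ is a holomorphic function on a neighborhood of $[\rho]$ in $\QF(S)$, given via the trace. By Theorem \ref{continuity of derivative}, $T_{\mu_n}([\rho])\to T_\mu([\rho])$ in $T(\QF(S))$. So the left-hand side converges to $d\mathcal L_\gamma(T_\mu([\rho]))=\mathcal L_\gamma'(0)$.

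\textbf{Right-hand side.} We must show that
$$\int \cosh\sigma_{\rho,\gamma}\,d\widehat{(\gamma\times\mu_n)}\to\int\cosh\sigma_{\rho,\gamma}\,d\widehat{(\gamma\times\mu)}.$$
This is where the main difficulty lies. The measures $\widehat{\gamma\times\mu_n}$ live on $D_\gamma(X)$, which is not compact: the angle between $\vec u$ and $\vec v$ can tend to $0$. Also, $\cosh\sigma_{\rho,\gamma}$ blows up as $g_V$ approaches the axis, because the cross ratio tends to $1$.

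Two facts handle this. First, $\supp(\mu)$ either contains the closed geodesic $\gamma^*$ or meets it transversely at angles bounded below. In the first case $\gamma$ contributes no intersection, and we choose $\mu_n$ to avoid crossing $\gamma^*$ in the limit; we can treat the leaf $\gamma^*$ as an atom contributing nothing. In the second case, the Hausdorff control on supports keeps $\supp(\mu_n)\cap\gamma^*$ at angles bounded below for large $n$. Either way, all the measures are supported in a fixed compact subset $K\subset D_\gamma(X)$. On $K$ the function $\cosh\sigma_{\rho,\gamma}$ is continuous and bounded. It is continuous because $\xi_\rho$ is continuous and distinct endpoints in $\partial\mathbb H^2$ map to distinct points, so the cross ratio stays away from $1$ and $\infty$.

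Second, weak$^*$ convergence $\mu_n\to\mu$ implies weak$^*$ convergence $\widehat{\gamma\times\mu_n}\to\widehat{\gamma\times\mu}$ on $K$. This is the same argument as Bonahon's continuity of intersection number, localized to $\gamma$. There is one subtlety: atoms of the transverse measure on boundary points of $K$, as well as the basepoint issue. We avoid these by perturbing $x_0$ off $\supp(\mu)$, which is allowed because the formula is independent of the basepoint.

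Combining the two limits yields the theorem. If the train-track control on supports proves awkward, the fallback is to use a cutoff function near the diagonal together with the uniform bound $\widehat{\gamma\times\mu_n}(D_\gamma)\to i(\gamma,\mu)$.
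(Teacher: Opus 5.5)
Your skeleton matches the paper's: approximate $\mu$ by finite-leaved laminations, get the left side from Kourouniotis' continuity, and get the right side from weak$^*$ convergence on a compact set of pairs whose angle lies in $[\theta,\pi-\theta]$. Your version of the left side, $d\mathcal L_\gamma(T_{\mu_n}([\rho]))\to d\mathcal L_\gamma(T_\mu([\rho]))$, is a perfectly good substitute for the paper's uniform convergence of the holomorphic functions $z\mapsto\mathcal L(\rho_{z\mu_n}(\gamma))$.

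However, your diagnosis of the difficulty is wrong: $\cosh\sigma_{\rho,\gamma}$ does not blow up near the diagonal. The cross ratio $[u_-,v_-,v_+,u_+]$ equals $1$ only if $u_-=u_+$ or $v_-=v_+$. When $g_V$ tends to $a_U(\gamma)$ with the same orientation, the denominator $(u_--v_-)(u_+-v_+)$ tends to $0$. With the opposite orientation, the numerator $(u_--v_+)(u_+-v_-)$ does. So the cross ratio tends to $\infty$ or $0$, and $\cosh\sigma_{\rho,\gamma}\to\pm1$. (In the fuchsian case $\cosh\sigma_{\rho,\gamma}$ is just the cosine of the angle between the oriented geodesics.) Thus $\cosh\sigma_{\rho,\gamma}$ is bounded and continuous on all of $D_\gamma(X)$, and the only danger is mass escaping to the ends under vague convergence. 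In particular, your fallback is already a complete argument. Vague convergence plus $\widehat{\gamma\times\mu_n}(D_\gamma(X))=i(\gamma,\mu_n)\to i(\gamma,\mu)$ (Bonahon's continuity, applied to real and imaginary parts) gives convergence of integrals of bounded continuous functions. This needs no angle bound and no separate treatment of $i(\gamma,\mu)=0$.

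If you prefer the compact-support route, you do not need specially chosen approximants. The paper first disposes of $i(\gamma,\mu)=0$. It then notes that, for an arbitrary sequence $\mu_n\to\mu$, $\supp(\mu)$ lies in every Hausdorff limit of $\supp(\mu_n)$. If $\supp(\mu_n)$ met $\gamma^*$ at angles tending to $0$ or $\pi$ along a subsequence, $\gamma^*$ would be a leaf of such a limit lamination. It could then not cross $\supp(\mu)$, forcing $i(\gamma,\mu)=0$. This replaces your train-track/Hausdorff control and your case analysis, which also omits the case $\gamma^*\cap\supp(\mu)=\emptyset$.

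Your suggested construction is faulty in any case. Approximating $\Re\mu$ and $\Im\mu$ separately by multicurves generally produces crossing supports, so $\mu_n\notin\mathcal{ML}^{++}(S)$. Then neither Theorem~\ref{finite derivative of complex length} nor Theorem~\ref{continuity of derivative} applies; the real and imaginary weights must sit on a common multicurve.

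Finally, perturbing $x_0$ does nothing about atoms on $\partial K$. That is handled by a cutoff function equal to $1$ on the set of pairs with angle in $[\theta,\pi-\theta]$ and supported on the set with angle in $[\theta/2,\pi-\theta/2]$.
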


\begin{proof}
First notice that our formula holds trivially when $i(\gamma,\mu)=0$, since both sides are clearly equal to 0.
So we will assume that $i(\gamma,\mu)\ne 0$.

Fix $(X,h)\in\mathcal T(S)$ and
let $\{\mu_n\}$ be a sequence of finite-leaved complex measured laminations in $\mathcal{ML}^{++}(X)$ which converge to $\mu$.
We claim that there exists $\theta>0$  and $N$ so that if $n\ge N$ and $(\vec u,\vec v)$ is in the support of
$\widehat{\gamma\times\mu_n}$, then the (un-oriented) angle $\angle \vec u,\vec v$ between $\vec u$ and $\vec v$ lies between
$\theta$ and $\pi-\theta$.
If not, then $\gamma$ lies in the Hausdorff limit of a subsequence of $\{\supp(\mu_n)\}$. Since the support of $\mu$ is contained
in the Hausdorff limit of any subsequence of $\{\supp(\mu_n)\}$, this would imply that $i(\gamma,\mu)=0$.

Let $\mathcal L_\gamma^n(z)=\mathcal L(\rho_{z\mu_n}(\gamma))$ and $\mathcal L_\gamma(z)= \mathcal L(\rho_{z\mu}(\gamma))$. Each 
$\mathcal L_\gamma^n$ is  holomorphic in $z$ (by Theorem \ref{continuity of derivative}) and  
$\{\mathcal L_\gamma^n\}$ converges uniformly on compact sets to $\mathcal L_\gamma$. Thus,
$$\mathcal L_\gamma'(0)=\lim (\mathcal L_\gamma^n)'(0).$$
Moreover, Theorem \ref{finite derivative of complex length} implies that
$$(\mathcal L_\gamma^n)'(0)= \int_{D_\gamma(X)} \cosh\sigma_{\sigma,\rho}\ d\widehat{(\gamma\times\mu_n)}$$
for all $n$.
Let
$$D_{\gamma,\theta}(X)=\{(\vec u,\vec v)\in D_\gamma(X) : \pi-\theta\ge\angle \vec u,\vec v\,\ge\theta\}.$$
For all $n\ge N$,
$$(\mathcal L_\gamma^n)'(0)= \int_{D_{\gamma,\theta}(X)} \cosh\sigma_{\sigma,\rho}\ d\widehat{(\gamma\times\mu_n)}$$
Since, $D_{\gamma,\theta}(X)$ is compact and $\{\gamma\times\mu_n\}$ converges
to $\gamma\times\mu$ in the weak$^*$-topology, we see that
$$\lim_{n\rightarrow \infty} (\mathcal L_\gamma^n)'(0)= \int_{D_{\gamma,\theta}(X)} \cosh\sigma_{\sigma,\rho}\ d\widehat{(\gamma\times\mu)}=
 \int_{D_{\gamma}(X)} \cosh\sigma_{\sigma,\rho}\ d\widehat{(\gamma\times\mu)}$$
which completes the proof.
\end{proof}

\section{Critical points of the entropy function}
\label{entropy}

In this section we show that if one of the components of the domain of discontinuity of a nonfuchsian quasifuchsian group
is moderately bent, then it is not a critical point of the entropy function

Work of Sambarino \cite{sambarino} provides a criterion guaranteeing that $\rho$ is not a critical point.
If we consider $[\rho]\in \QF(S)$ and a non-zero variation $\vec v\in T_{[\rho]} \QF(S)$,
then the link between variation of entropy and variation of lengths is given by the \emph{set of normalized variations}
$$\mathbb V_{\vec v}=\overline{\left\{\frac{\dd\ell_\g(\vec v)}{\ell_\g}:\g\in\pi_1(S)\right\}}\subset\R.$$ 
This is a closed interval and if $\rho$ is not fuchsian  it has non-empty interior. 
Moreover, in this case $-\dd h(\vec v)/h(\rho) \in\mathrm{int}(\mathbb V_{\vec v})$ (see \cite[Lemma 2.32]{sambarino}).  
The following criterion follows immediately.

\begin{proposition}{\rm (Sambarino \cite{sambarino})}
\label{critical point fact} 
Suppose that $[\rho]\in \QF(S)$ is not fuchsian and $\vec v\in T_{[\rho]} \QF(S)$ is not zero. 
If $\dd\ell_\g(\vec v)\leq0$ for all $\g\in\pi_1(S)$, then $\dd h(\vec v)\neq0,$ in particular $h$ is not critical at $[\rho].$
\end{proposition}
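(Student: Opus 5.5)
The plan is to derive Proposition \ref{critical point fact} directly from the fact recalled just before it: for $[\rho]$ not fuchsian and $\vec v\neq 0$, the set $\mathbb V_{\vec v}$ is a closed interval with non-empty interior, and $-\dd h(\vec v)/h(\rho)$ lies in its interior $\mathrm{int}(\mathbb V_{\vec v})$ (Sambarino \cite[Lemma 2.32]{sambarino}). The argument is a short contrapositive.

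First, the hypothesis $\dd\ell_\g(\vec v)\le 0$ for all $\g\in\pi_1(S)$, together with $\ell_\g(\rho)>0$ for non-trivial $\g$ (discreteness and faithfulness of a quasifuchsian representation), shows that every quotient $\dd\ell_\g(\vec v)/\ell_\g$ is $\le 0$. The trivial element should be excluded from the set defining $\mathbb V_{\vec v}$, as is implicit there. Taking the closure, we get $\mathbb V_{\vec v}\subset(-\infty,0]$. Since $\mathbb V_{\vec v}$ is an interval with non-empty interior lying in $(-\infty,0]$, its interior is contained in the open half-line $(-\infty,0)$. In particular $0\notin\mathrm{int}(\mathbb V_{\vec v})$.

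Next, because $-\dd h(\vec v)/h(\rho)\in\mathrm{int}(\mathbb V_{\vec v})$, we have $-\dd h(\vec v)/h(\rho)<0$. As $h(\rho)\ge 1>0$, this gives $\dd h(\vec v)>0$, so $\dd h(\vec v)\neq 0$. Hence the differential $\dd h$ at $[\rho]$ is non-zero, and $[\rho]$ is not a critical point of $h$; we even obtain the sign, namely that $h$ increases in the direction $\vec v$.

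There is no genuine obstacle once Sambarino's lemma is granted. The only points needing care are that $\mathbb V_{\vec v}$ has non-empty interior, which is where the hypotheses that $\rho$ is non-fuchsian and $\vec v\neq0$ enter, and that the interior membership is strict. Strictness is what prevents the degenerate case $\dd h(\vec v)=0$ at an endpoint of $\mathbb V_{\vec v}$.
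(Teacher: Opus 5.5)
Your proposal is correct and is exactly the argument the paper has in mind when it says the criterion follows immediately from Sambarino's Lemma 2.32. The hypothesis forces $\mathbb V_{\vec v}\subset(-\infty,0]$, so $-\dd h(\vec v)/h(\rho)\in\mathrm{int}(\mathbb V_{\vec v})\subset(-\infty,0)$, and since $h(\rho)\ge 1$ this gives $\dd h(\vec v)>0$.
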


We can use our formula for the derivative of complex length and our angle bounds to produce such vectors.

\medskip\noindent
{\bf Theorem \ref{Main Theorem 1}.} {\em
Suppose that $[\rho]\in QF(S)$ is not fuchsian, $\Omega_\nu(\rho)$ is moderately bent (for some $\nu\in\{\pm\}$) and
$w=w_\nu(\rho)$ is the infinitesmal bending deformation for $-i\beta_\nu$.
If $\gamma\in \pi_1(S)$, then $d\ell_\gamma(w)\le 0$ and $d\ell_\gamma(w)  < 0$ if  $\gamma$ intersects $\beta_\nu$ essentially.}

\medskip

Notice that Theorem \ref{Main Theorem 2} follows immediately from Theorem \ref{Main Theorem 1} and Proposition \ref{critical point fact}.

\begin{proof}
By  Theorem \ref{derivative of complex length},
$$d\ell_\gamma(w)=
\Re(d\mathcal L_\gamma(w))= \Re\left( \int_{D_\gamma(X)}  \cosh \sigma_{\rho,\gamma}\ d\widehat{(\gamma\times -i\beta_\nu)}\right) = 
\int_{D_\gamma(X)}  \Im\Big(\cosh \sigma_{\rho,\gamma}\Big)\ d\widehat{(\gamma\times\beta_\nu)}.$$
In particular, $d\ell_\gamma(w)=0$ if $\gamma$  does not intersect $\beta_\nu$ essentially.
Recall that given $(\vec u, \vec v) \in D_\gamma(X)$, we lift $(\vec u, \vec v)$ to a pair of projective tangent vectors $(\vec U,\vec V)\in \sf{PT}\Hp$
which are  tangent to geodesics $a_U(\gamma)$ and  $g_V$. Then 
$$\sigma_{\rho,\gamma}(\vec u,\vec v)\ =\sigma((\xi_{\rho})_*(a_U(\gamma)),(\xi_{\rho})_*(g_V)).$$
Denoting the geodesics as an oriented pair of endpoints, we let $(u_-,u_+) = (\xi_{\rho})_*(a_U(\gamma))$ and $(v_-,v_+) =  (\xi_{\rho})_*(g_V)$. 
By Equation (\ref{Im-cr})
$$\Im\Big(\cosh \sigma_{\rho,\gamma}\Big)(\vec u,\vec v)\ =\frac{-2\Im\Big( [u_-,v_-,v_+,u_+]\Big) }{\left| [u_-,v_-,v_+,u_+]-1\right|^2}.$$
Thus it suffices to show that $\Im\Big([u_-,v_-,v_+,u_+]\Big) > 0$ for all $(\vec u,\vec v)\in\supp (\widehat{\gamma\times\beta_\nu})$.
Notice that, since $g_V$ crosses $a_U(\gamma)$ from left to right, the vertices occur in the  order $u_-,v_-,u_+,v_+$ on $\Lambda(\rho)$.

For a bending line with endpoints  the bending pair $(v_-,v_+)$, we can choose a support plane $P$ which intersects $\Lambda(\rho)$ only at $v_-$ and $ v_+$.
(If the bending line has a unique support plane it has this property, while if not we can take any support plane which is not left-most or right-most.)
 We may assume that  $(v_-,v_+) =(0,\infty)$  that the plane $P$ lying above the real axis $\R$ is a support plane for $CH(\Lambda(\rho))$, 
 and the upper half-plane is the boundary disk of the support half-space bounded by $P$. 
 It follows that  $\Omega_\nu$ contains the upper half-plane and that $u_-$ and $ u_+$ are in the lower half-plane. 
Since $\Omega_\nu$ is moderately bent, there exists a line $L$ through $0$ and $\infty$ which intersects $\Lambda(\rho)$ 
transversely and only at $0$ and $\infty$.  With our conventions $u_-$ lies to the left of $L$ (i.e. the side containing the negative real axis) 
and $u_+$ lies to the right of $L$ (see Figure \ref{fig:mod_bend}).
As the cross-ratio is invariant under rotation, we apply a clockwise rotation $r$ about 0 so that $z_+ = r(u_+)$ lies on the positive real axis and
let $z_- = r(u_-)$. The angle of rotation is less than the  angle the portion of  $L$ below the real axis  makes with the positive real axis, so $\Im(z_-) < 0$.  
\begin{figure}[htbp] 
   \centering
   \includegraphics[width=3in]{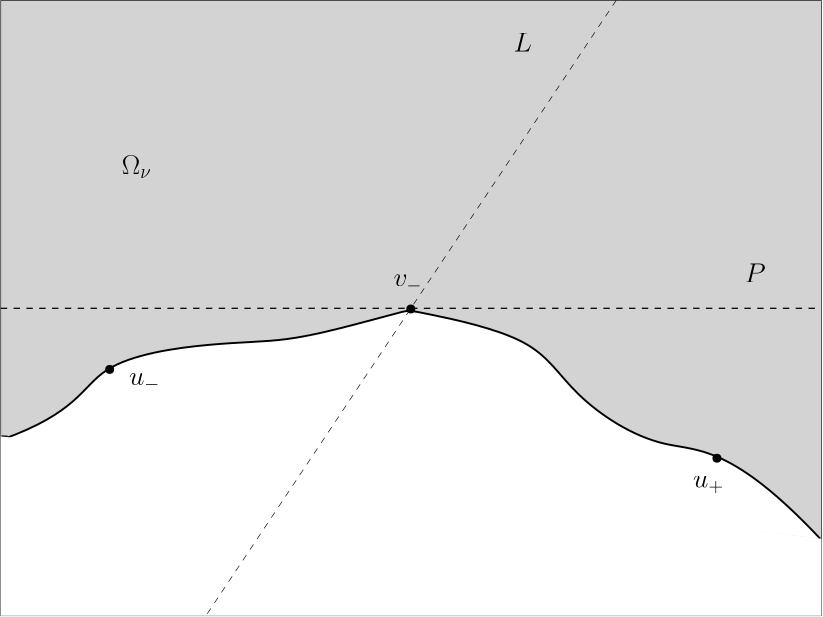} 
   \caption{Domain $\Omega_\nu$}
   \label{fig:mod_bend}
\end{figure}
So, 
$$ [u_-,v_-,v_+,u_+] = [z_-,0,\infty,z_+]=\frac{z_+}{z_-}=\frac{z_+\overline{z_-}}{|z_-|^2}.$$
Moreover,
$$\Im\Big(z_+\overline{z_-}\Big)= - z_+\Im(z_-) > 0,\quad\text{so}\quad \Im\Big( [u_-,v_-,v_+,u_+]\Big) > 0,$$
which completes the proof.
\end{proof}

\section{Binding pairs of laminations and $\delta$-intersection number}

In this section we prove Theorem \ref{Main Theorem 4} which shows that lengths of curves decrease uniformly
in the direction  $w(\rho)= w_+(\rho)+w_-(\rho)\in \sf T_{[\rho]}QF(S)$. 

We say that a pair $(\alpha,\beta)$ of measured laminations  on $S$ is {\em binding} if 
$$i(\alpha,\mu)+i(\beta,\mu)>0$$
whenever $\mu$ is a non-trivial geodesic current on $S$.
If $(\vec u,\vec v)\in D(X)$, let $\angle \vec u, \vec v$ is the unoriented angle between $\vec u$ and $\vec v$.
In particular, $(\vec u,\vec v)\in (0,\pi))$. If $\delta\in \left(0,\frac{\pi}{2}\right)$, we define
$$D_\delta(X)=\{(\vec u,\vec v)\in D(X) : \pi-\delta\ge\, \angle \vec u, \vec v\, \ge\delta\}$$
If $\alpha,\mu\in \mathcal C(X)$ and $\delta>0$, we define $i_\delta(\alpha,\mu)$ to be the total mass of the points in 
$D_\delta(X)$ with respect to the quotient measure of $\alpha\times\mu$. Notice that while intersection number is independent of
the base hyperbolic structure, the $\delta$-intersection number depends crucially on the underlying structure.

\begin{lemma}
\label{small enough delta}
Let  $(\alpha,\beta)$ be a binding pair of measured laminations on a closed hyperbolic surface $X$.
There exists a$\delta > 0$  and $C>0$ so that if $\mu\in\mathcal C(X)$, then
$$i_\delta(\alpha,\mu)+i_\delta(\beta,\mu) \geq C \ell_X(\mu).$$
\end{lemma}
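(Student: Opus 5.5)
By homogeneity, both sides of the inequality scale linearly in $\mu$: $i_\delta(\alpha,t\mu)=t\,i_\delta(\alpha,\mu)$ and $\ell_X(t\mu)=t\,\ell_X(\mu)$. So it suffices to work on the compact set $\mathcal K=\{\mu\in\mathcal C(X):\ell_X(\mu)=1\}$. Bonahon showed that $\ell_X$ is continuous and proper on currents, and that the projectivized space of currents is compact, so $\mathcal K$ is compact. The plan is a compactness argument, but the functional $\mu\mapsto i_\delta(\alpha,\mu)$ is not continuous, since $D_\delta(X)$ is a closed set with boundary. I will handle this with a semicontinuity version: for fixed $\delta$, replace the indicator of $D_\delta(X)$ by a continuous cutoff.

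Concretely, for $0<\delta<\pi/4$, fix a continuous function $\phi_\delta:D(X)\to[0,1]$ depending only on the angle. It equals $1$ when the angle lies in $[2\delta,\pi-2\delta]$ and vanishes when the angle is outside $[\delta,\pi-\delta]$. Set
$$J_\delta(\mu)=\int\phi_\delta\,d\widehat{\alpha\times\mu}+\int\phi_\delta\,d\widehat{\beta\times\mu}\le i_\delta(\alpha,\mu)+i_\delta(\beta,\mu).$$
The support of $\phi_\delta$ lies in $D_\delta(X)$, which is compact: the base point ranges over the compact $X$ and the angles are bounded away from $0$ and $\pi$. The map $\mu\mapsto\widehat{\alpha\times\mu}$ restricted to this compact set is weak$^*$ continuous, by the same reasoning Bonahon uses for continuity of $i$ (or as in the proof of Theorem \ref{derivative of complex length}). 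Hence $J_\delta$ is continuous on $\mathcal K$.

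Next, as $\delta\to0$, $\phi_\delta$ increases pointwise to $1$ on $D(X)$. Monotone convergence gives $J_\delta(\mu)\uparrow i(\alpha,\mu)+i(\beta,\mu)$ for each $\mu$. Binding says this limit is positive on all of $\mathcal K$, since elements of $\mathcal K$ are nontrivial. The limit $i(\alpha,\cdot)+i(\beta,\cdot)$ is continuous on $\mathcal K$, and each $J_\delta$ is continuous, so Dini's theorem applies to the increasing family (taking $\delta=1/n$, so that $\phi_{1/n}$ is monotone). The convergence is therefore uniform on $\mathcal K$. Let $m=\min_{\mathcal K}(i(\alpha,\cdot)+i(\beta,\cdot))>0$. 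Choose $\delta$ with $J_\delta\ge m/2$ on $\mathcal K$, and set $C=m/2$. Scaling back then gives the lemma for every $\mu\ne0$; the case $\mu=0$ is trivial.

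I expect the main obstacle to be justifying the continuity of $\mu\mapsto\int\phi_\delta\,d\widehat{\alpha\times\mu}$. This requires checking that the product measure pulled back to $D(\Hp)$ and pushed to $D(X)$ depends weak$^*$ continuously on $\mu$ when tested against compactly supported continuous functions. I would reduce this to a fundamental domain together with a locally finite partition of unity in $G(\Hp)\times G(\Hp)$, exactly as in Bonahon's proof of continuity of intersection number. The angle cutoff is what makes the support compact, which in Bonahon's setting is the role played by the transversality of the two geodesics.
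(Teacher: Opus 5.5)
Your proof is correct and is essentially the paper's argument. Both use the same three ingredients: compactness of the unit-length currents, a continuous angle cutoff supported in the compact set $D_\delta(X)$ to get weak$^*$ (semi)continuity, and binding for positivity. The paper packages them as a contradiction along a sequence $\mu_n$ with $i_{1/n}(\alpha,\mu_n)+i_{1/n}(\beta,\mu_n)\to 0$, while you use Dini's theorem directly.

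One small point: indexing by $\delta=1/n$ does not by itself make $\phi_{1/n}$ increase in $n$, so you must choose the cutoffs to be monotone, e.g.\ piecewise linear in the angle. Alternatively, a finite-subcover argument that uses monotonicity of $i_\delta$ itself in $\delta$ avoids both this and the appeal to continuity of $i$.
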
 

\begin{proof}
Suppose no such $\delta$ and $C$ exist. Then, since $\ell_X$ and intersection number both scale linearly,
there exist a sequence $\{\mu_n\}$ of unit length geodesic currents so that
$$i_{\frac{1}{n}}(\alpha,\mu_n)+i_{\frac{1}{n}}(\beta,\mu_n) \to 0.$$
Since the space  $\mathcal C_1(X)$ of unit length geodesic currents  is homeomorphic to the space of projective geodesic currents $\mathcal{PC}(S)$,  $\mathcal C_1(X)$ is compact (see \cite{bonahon-currents}). We may assume that
$\mu_n\to\mu\in\mathcal C_1(X)$. Since $(\alpha,\beta)$ is binding,
$$i(\alpha,\mu)+i(\beta,\mu)>0.$$
Since $D(X)=\bigcup_{n\in\mathbb N} D_{\frac{1}{n}}(X)$, we see
that there exists $N\in\mathbb N$ so that  
$$i_{\frac{1}{N}}(\alpha,\mu)+i_{\frac{1}{N}}(\beta,\mu) > 0.$$
By considering a continuous bump function supported on $D_{\frac{1}{2N}}(X)$ and equal to 1 on $D_{\frac{1}{N}}(X)$,
we see that 
$$\liminf_{n\rightarrow \infty} \left(i_{\frac{1}{2N}}(\alpha,\mu_n)+i_{\frac{1}{2N}}(\beta,\mu_n)\right) \ge i_{\frac{1}{N}}(\alpha,\mu)+i_{\frac{1}{N}}(\beta,\mu)>0$$ which produces a contradiction as
$$\liminf_{n\rightarrow \infty} \left(i_{\frac{1}{2N}}(\alpha,\mu_n)+i_{\frac{1}{2N}}(\beta,\mu_n)\right) \le \liminf_{n\rightarrow \infty}\left( i_{\frac{1}{n}}(\alpha,\mu_n)+i_{\frac{1}{n}}(\beta,\mu_n)\right) = 0.$$ 
\end{proof}

We can now return to our bounds on derivatives and observe that they can be made uniform
with respect to $\delta$-intersection number.

\begin{lemma}
\label{uniform variation}
Suppose that $[\rho]\in \QF(S)$ is not fuchsian,
$\Omega_\nu$ is moderately bent for some $\nu\in\{\pm\}$ and $w=w_\nu(\rho)\in \sf T_{[\rho]} QF(S)$.
If $\delta\in \left(0,\frac{\pi}{2}\right)$, then  there exists $D=D(\delta,\rho_0)$ so that 
$$\dd\ell_\gamma(w_\nu(\rho)) \leq - C i_\delta(\gamma,\beta_+)$$
for all $\gamma\in\pi_1(S)$.
\end{lemma}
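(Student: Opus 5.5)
Note first that the statement has small typos: the constant is written $D$ in one place and $C$ in the other, and $\beta_+$ should read $\beta_\nu$. I will prove $\dd\ell_\gamma(w_\nu(\rho))\le -C\, i_\delta(\gamma,\beta_\nu)$ with $C=C(\delta,\rho)>0$, where $i_\delta$ is computed on a fixed hyperbolic structure $X$ (say the intrinsic metric of $\partial C_\nu(\rho)$, on which $\beta_\nu$ is geodesic).

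\textbf{Step 1: reduce to a pointwise lower bound.} As in the proof of Theorem \ref{Main Theorem 1}, Theorem \ref{derivative of complex length} gives
$$\dd\ell_\gamma(w)=\int_{D_\gamma(X)}\Im\big(\cosh\sigma_{\rho,\gamma}\big)\,d\widehat{(\gamma\times\beta_\nu)},$$
and we showed that the integrand is strictly negative on the support. Discarding the part of the measure off $D_\delta(X)$ therefore only increases the integral. Hence it suffices to find $C>0$, independent of $\gamma$, with
$$-\Im\cosh\sigma_{\rho,\gamma}(\vec u,\vec v)\ge C \quad\text{for all } (\vec u,\vec v)\in \supp(\widehat{\gamma\times\beta_\nu})\cap D_\delta(X).$$
Integrating this bound then yields $-C\,i_\delta(\gamma,\beta_\nu)$.

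\textbf{Step 2: rewrite the integrand as a function of geodesic pairs.} The value $\sigma_{\rho,\gamma}(\vec u,\vec v)$ depends only on the pair of geodesics in $\Hp$ through the lifts, namely the axis $a_U(\gamma)$ and a leaf $g_V$ of $\tilde\beta_\nu$. So consider the set $K_\delta$ of pairs $(g,m)$, where $m$ is a leaf of $\tilde\beta_\nu$, $g$ is any geodesic crossing $m$ from right to left (with the orientation convention), and the angle between them lies in $[\delta,\pi-\delta]$. The function
$$F(g,m)=-\Im\cosh\sigma\big((\xi_\rho)_*g,(\xi_\rho)_*m\big)$$
is continuous and $\Gamma$-invariant on $K_\delta$. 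The claim is that $K_\delta/\Gamma$ is compact. Indeed, we may translate the intersection point into a compact fundamental domain. The leaves of $\tilde\beta_\nu$ through a compact set form a compact set, since the lamination is closed. The geodesics through a given point at an angle in $[\delta,\pi-\delta]$ to $m$ also form a compact set. This is exactly where the cutoff $\delta$ is needed.

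\textbf{Step 3: strict positivity on all of $K_\delta$.} This step is the crux. The proof of Theorem \ref{Main Theorem 1} showed $F>0$ whenever $g$ is the axis of a group element. That argument, however, used only the following facts:
\begin{itemize}
\item the endpoints of $g$ map to distinct points $u_\pm\in\Lambda(\rho)$ interleaved with $v_\pm$;
\item a support plane meets $\Lambda(\rho)$ only at $v_\pm$;
\item the moderately bent hypothesis produces a circle $L$ through $v_\pm$ meeting $\Lambda(\rho)$ only at $v_\pm$, transversely.
\end{itemize}
None of these uses that $g$ is an axis. So the same cross-ratio computation gives $\Im[u_-,v_-,v_+,u_+]>0$ for every $(g,m)\in K_\delta$, hence $F>0$ on all of $K_\delta$.

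\textbf{Conclusion and main obstacle.} A continuous positive function on a compact set has a positive minimum, and we take $C=\min_{K_\delta/\Gamma}F$. I expect the main obstacle to be justifying the compactness in Step 2 and the continuity of $F$ there. For continuity, note that the endpoint map $\xi_\rho$ is continuous, and that the interleaved endpoints $u_\pm,v_\pm$ stay distinct under limits within $K_\delta$, because $g$ and $m$ still intersect at an angle at least $\delta$. Hence the cross ratio stays finite and different from $1$.
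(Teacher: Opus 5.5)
Your proposal is correct and follows the paper's argument: restrict the integral from Theorem~\ref{derivative of complex length} to $D_\delta(X)$, which is legitimate because the integrand $\Im\cosh\sigma$ is non-positive on the support by the cross-ratio argument from the proof of Theorem~\ref{Main Theorem 1} (an argument that never uses that $g$ is an axis), and then get a uniform bound $\Im\cosh\sigma\le -C$ from compactness of $D_\delta(X)$, closedness of $\supp\beta_\nu$ and continuity. Your version takes the minimum over the $\gamma$-independent compact set $K_\delta/\Gamma$, and so is slightly more careful than the paper's write-up, which states compactness for the support of a single $\mu\times\beta_\nu$ and leaves implicit that $C$ does not depend on $\gamma$; your reading of the typos ($C$ for $D$, $\beta_\nu$ for $\beta_+$) is the intended one.
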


\begin{proof}
Recall that  if $\mu\in\mathcal C(X)$, then
$i_\delta(\mu,\beta_\nu)=\mu\times \beta_\nu \big(D_\delta(X)\big)$ and that $D_\delta(X)$ is compact.
If $(u,v)\in D_\delta(X)$ lies in the support of $\mu\times \beta_\nu$, then $u$ is a tangent vector to a leaf $m$ of $\mu$ 
and $v$ is a tangent vector to a leaf $a$ of $\beta_\nu$. Since $\Omega_\nu$ is moderately bent,
we saw in the proof of Theorem \ref{Main Theorem 1} that  $\Im\cosh \sigma(a,m) < 0$. 
Since the intersection of the support of $\mu\times \beta_\nu$ with $D_\delta(X)$ is compact and complex length varies continuously,
there exists $C>0$ so that if $(u,v)\in D_\delta(X)$ lies in the support of $\mu\times \beta_\nu$, then
$\Im\left(\cosh \sigma(a,m)\right)\le -C$.
Our formula for $d\ell_\gamma(w_\nu(\rho))$, from Theorem \ref{derivative of complex length}, immediately gives that
$$d\ell_\gamma(w_\nu(\rho)) \leq -C i_\delta(\gamma,\beta_+)$$
for all $\gamma\in\pi_1(S)$.
\end{proof}

We are now ready to prove Theorem \ref{Main Theorem 4}.

\medskip\noindent
{\bf Theorem \ref{Main Theorem 4}.} {\em
Suppose that $\rho\in \QF(S)$ is not fuchsian and that $w = w_+(\rho)+w_-(\rho)$. If both $\Omega_+(\rho)$ and $\Omega_-(\rho)$ are
moderately bent, then there exists  $K > 0$ such that
$$\dd\ell_\g(w) \leq  -K \ell_\g(\rho)$$
for all $\gamma\in\pi_1(S)$.}

\begin{proof}
Recall that the bending laminations of a quasifuchsian group which is not fuchsian always bind 
(see, for example Bonahon-Otal \cite[Proposition 4]{BO}).
Fix a hyperbolic structure  $X$ on $S$.
Lemma \ref{small enough delta} implies that  there exists a $\delta > 0$  and $C>0$ so that if $\mu\in\mathcal C(X)$, then
$$i_\delta(\beta_+,\mu)+i_\delta(\beta_-,\mu) \geq C \ell_\mu(X).$$
Lemma \ref{uniform variation} implies that there exists $D>0$ so that   
$$\dd\ell_\g(w_{\pm}(\rho)) \leq - D i_\delta(\g,\beta_\pm)$$
for all $\g\in\pi_1(S)$ and $\nu\in\{\pm\}$. 
Therefore,
$$\dd\ell_\g(w) \leq - D \Big( i_\delta(\g,\beta_+)+i_\delta(\g,\beta_-)\Big)\leq -CD \ell_\g(X)$$
for all $\g\in\pi_1(S)$.
Since $\rho$ is quasifuchsian, there exists $B>1$ so that
$$\frac{\ell_\g(\rho)}{B}\le \ell_\g(X)\le B\ell_\g(\rho)$$
for all $\g\in \pi_1(S)$.
Therefore,
$$\dd\ell_\g(w) \leq  -\frac{CD}{B} \ell_\g(\rho)$$
for all $\g\in\pi_1(S)$.
\end{proof}

\section{Proper affine actions  with quasifuchsian holonomy}
\label{affine actions}

In this section, we show that if both components of the domain of discontinuity of  a nonfuchsian quasifuchsian
representation $\rho$ are moderately bent, then $\Ad\rho$
arises as the linear part of a proper affine action of the surface group on $\mathfrak{sl}(2,\mathbb C)$.

In Section \ref{margulis}, we introduce the Margulis invariant of a loxodromic element in $\mathsf{PSL}(d,\mathbb C)$ and an element of
the Lie algebra $\mathfrak{sl}(d,\mathbb C)$. We see that the Margulis invariant provides a bound on the displacement of the associated action
on $\mathrm{Aff}(\mathfrak{sl}(d,\mathbb C))$.
The reader who is only interested in the quasifuchsian case can always assume that $d=2$
which simplifes the discussion. 

In Section \ref{Anosovpreliminaries1}, we recall the theory of totally Anosov (a.k.a. Borel Anosov) representations into $\mathsf{PSL}(d,\mathbb C)$.
This is a natural setting for  our work, since  if a representation is totally Anosov then all infinite order elements have loxodromic image.
A representation into $\mathsf{PSL}(2,\mathbb C)$ is totally Anosov if and only if it is convex cocompact.
In Section \ref{criterion}, we develop a criterion guaranteeing that a totally Anosov representation into $\mathsf{PSL}(d,\mathbb C)$ and an associated cocycle with image in
$\mathfrak{sl}(d,\mathbb C)$ give rise to a proper affine action on $\mathrm{Aff}(\mathfrak{sl}(d,\mathbb C))$.
In Section \ref{conclusion} we use our criterion to prove  Theorem \ref{Main Theorem 3}. In section \ref{others}
we obtain generalizations of Theorem \ref{Main Theorem 3} in other complex Lie groups.

\subsection{The Margulis invariant}
\label{margulis}

The fact that we are working in $\PSL(d,\mathbb C)$ means that the eigenvalues of a matrix are only well-defined up to mulitplication by a $d^{\rm th}$ root of unity.
However, the moduli of the eigenvalues and their associated eigenspaces are still well-defined. We first develop language which makes this concrete.

If $g\in\PSL(d,\C)$, we may choose a lift $\tilde g\in \SL(d,\mathbb C)$. Let $\big(\lambda_i(\tilde g)\big)_{i=1}^d$ denote the (generalized) eigenvalues of $\tilde g$ ordered
so that $|\lambda_i(\tilde g)|\ge |\lambda_{i+1}(\tilde g)|$ for all $i$. Notice that the center  $Z_d$ of $\mathsf{SL}(d,\mathbb C)$ consists of matrices of the form $r \rm{I}$ 
where $r$ is a $d^{\rm th}$ root of unity. 
If $\tilde g'$ is another lift of $g$, then there exists $c\in Z_d$, so that $\tilde g=c\tilde g'$. In particular,  there exists a  $d^{\rm th}$ root of unity $r$,  so that
$\lambda_i(\tilde g)=r\lambda_i(\tilde g')$  for all $i$. So we may define the {\em Jordan projection} $\mu:\PSL(d,\mathbb C)\to \mathfrak a^+$ where
$$\mathfrak a^+=\left\{\mathrm{diag}(a_1,\ldots,a_d):\ a_1, \ldots, a_d\in\R,\  a_i\ge a_{i+1}\forall\ i\text{ and } \sum_{i=1}^d a_i=0\right\}\ \ \text{and}\ \ \mu(g)=(\log|\lambda_1(\tilde g)|,
\ldots,\log |\lambda_d(\tilde g)|).$$

We say that $g$ is loxodromic if any (hence every) lift  $\tilde g$ is loxodromic, i.e. is 
diagonalizable over $\mathbb C$ and its eigenvalues have distinct moduli. 
If $g$ is loxodromic, then there exists $\tilde \psi\in\PSL(d,\C)$ that conjugates $\tilde g$ to a diagonal matrix $\mathrm{diag}(b_1,\ldots,b_d)$
with decreasing in modulus entries, i.e.
$|b_i|>|b_{i+1}|$ for all $i$. Then $\tilde\psi$ projects to $\psi\in\PSL(d,\C)$ and
we will say that $\psi$ conjugates $g$ into standard form.

If $g$ is loxodromic, let $L_i(g)$ denote the one-dimensional eigenspace of $\tilde g$ with eigenvalue $\lambda_i(\tilde g)$.
The eigenspace $L_i(g)$ is well-defined since any two lifts differ by an element of $Z_d$ and $Z_d$ acts trivially on $\mathbb P(\C^d)$.
Let $E_i(g)$ be the $i$-dimensional space spanned by the first $i$ eigenspaces, i.e. $E_i(g)=\langle L_1(g),\ldots, L_i(g)\rangle$.
The flag $E(g)=\big(E_i(g)\big)_{i=1}^{d-1}$ is the attracting flag of $g$. 
Notice that if $\psi$ is an element conjugating $g$
into standard form and 
$$E^0=\{ \langle e_1\rangle, \langle e_1, e_2, \rangle, \ldots,\langle e_1,\ldots,e_{d-1}\rangle\}$$
then $E(g)=\psi(E^0)$.
The repelling flag of $g$ is given by
$F(g)=\big(F_i(g)\big)_{i=1}^{d-1}$ where $F_i(g)=\langle L_{d-i+1},\ldots, L_d(g)\rangle$. Notice that $E(g)=F(g^{-1})$.

The Lie algebra $\sl(d,\C)$ of $\PSL(d,\C)$ contains a
subalgebra $\h$ consisting of traceless diagonal matrices (over $\C$). More precisely,
$$\h=\big\{\diag(z_1,\ldots,z_d):\sum z_i=0\big\}.$$ 
The adjoint action $\Ad(g):\sl(d,\C)\to\sl(d,\C)$ is given $\Ad(g)(v)=\tilde g v \tilde g^{-1}$.
(Notice that again the choice of lift is irrelevant since any two lifts differ by an element of $Z_d$ and $\Ad(c)$ is the identity map for any  $c\in Z_d$.)
If $g$ is loxodromic, $\Ad(g)$ is also diagonalizable (although it may have eigenvalue coincidences).
If $\psi$ conjugates $g$ into standard form, then its $1$-eigenspace is given by
$$V^0(g):=\mathrm{Fix}\,(\Ad(g))=\Ad(\psi^{-1})\h.$$
We can consider the projection 
$$\pi^0_g:\sl(d,\C)\to V^0(g)$$ 
whose kernel $W(g)$ is $\Ad(g)$-invariant. More explicitly $W(g)$ is  the sum of the eigenspaces associated to all the eigenvalues but $1$.

Margulis \cite{Mar1} originally defined the  Margulis invariant for pairs in $\mathrm{SO}(1,2)\ltimes\R^3$. The following
generalization is due to Smilga \cite{Smig}.

\begin{definition}  If $g\in\PSL(d,\C)$ and $x\in\sl(d,\C)$ the \emph{Margulis invariant} of $(g,x)$ is 
$$\sf m(g,x):=\Ad(\psi)\big(\pi^0_g(x)\big)\in\h,$$ 
i.e. we project $x$ onto the fixed set  $V^0(g)$ of $\Ad(g)$ and move it into $\h$ via $\Ad(\psi).$
\end{definition}

Since  any two elements $\psi$ and $\psi'$   which conjugate $g$ into standard form  satisfy $\psi'\psi^{-1}\in\exp\h$ and $\Ad(\exp \h)$ acts trivially on $\h,$ 
the Margulis invariant $\sf m(g,x)$ is independent  of the choice of $\psi$. Moreover,
\begin{itemize}
\item[-] $\sf m(g,x)$ is invariant under conjugation by an element of $\PSL(d,\C)\ltimes\sl(d,\C),$ and
\item[-] $\sf m\big((g,x)^{-1}\big)=\mathrm{i}\big(\sf m(g,x)\big),$ where $\mathrm{i}$ is the opposition involution of $\h.$
\end{itemize}

\medskip

In our proof it will be useful to choose $\psi$ efficiently.

\begin{lemma}{\rm (Bochi-Potrie-Sambarino \cite[Proposition 7.2]{BPS})} 
\label{control-psi} 
Given $d\in\mathbb N$, 
there exists $K>1$  so that 
if $g\in\PSL(d,\C)$ is loxodromic, one may choose an element $\psi$ which conjugates $g$ into standard form so that
$$\|\psi\|\min_{i\in\lb1,d-1\rb}\angle(E_i(g),F_{d-i}(g))\in(1/K,K)$$ 
where $\|\psi\|$ is the standard operator norm of $\psi$.
\end{lemma}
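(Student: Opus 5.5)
We build $\psi$ explicitly from eigenvectors, normalized so that the smallest angle controls the norm. Since the statement is invariant under multiplication on either side by elements of $\mathsf{PSU}(d)$ (up to a change in $K$ depending only on $d$), we work in $\SL(d,\C)$ with a lift $\tilde g$.

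The first step is to set up the eigenvectors. Choose unit eigenvectors $v_i\in L_i(g)$, $i=1,\dots,d$. Let $P$ be the matrix with columns $v_1,\dots,v_d$, and rescale it to $\tilde P = P/(\det P)^{1/d}$ so that it lies in $\SL(d,\C)$. Then $\tilde P^{-1}\tilde g\tilde P$ is diagonal with entries decreasing in modulus. We therefore take $\psi^{-1}$ to be the projection of $\tilde P$, so that $\psi=\tilde P^{-1}$ (or its inverse, depending on the convention for ``conjugates into standard form''; the argument applies symmetrically to $\psi$ and $\psi^{-1}$).

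The second step estimates the relevant quantities in terms of angles. Set $\theta=\min_i\angle(E_i(g),F_{d-i}(g))$.
\begin{itemize}
\item \emph{Determinant.} $|\det P|$ is the volume of the parallelepiped spanned by the unit vectors $v_i$. Writing it as a product $\prod_i \operatorname{dist}(v_i,\langle v_1,\dots,v_{i-1}\rangle)$, each factor is bounded below by $\sin\angle(E_{i-1},F_{d-i+1})\ge c\,\theta$ and above by $1$. Hence $c^d\theta^{d}\le|\det P|\le 1$.
\item \emph{Norm of $P^{-1}$.} The rows of $P^{-1}$ form the dual basis. The $i$-th dual vector has norm $1/\operatorname{dist}(v_i,\operatorname{span}_{j\ne i}v_j)$. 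Since $\operatorname{span}_{j\neq i} v_j$ contains neither $E_i$ nor $F_{d-i+1}$ entirely, this distance is bounded below by roughly $\sin\angle(E_i,F_{d-i})\cdot\sin\angle(E_{i-1},F_{d-i+1})$. This gives only a polynomial bound in $\theta$.
\end{itemize}

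The main obstacle is that these crude estimates produce $\|\psi\|\sim\theta^{-m}$ for some power $m$ that may exceed $1$, whereas the statement asks for exactly $\theta^{-1}$. To get the sharp exponent I would not use $P$ directly. Instead I would use a Gram--Schmidt adapted choice: let $u_1,\dots,u_d$ be the orthonormal basis adapted to the flag $E(g)$, and similarly $w_d,\dots,w_1$ adapted to $F(g)$. Then $\psi^{-1}$ factors as $k\cdot n$, where $k\in \mathsf{SU}(d)$ and $n$ is upper triangular with unit-modulus diagonal. It is then natural to use the freedom $\psi\mapsto \exp(h)\psi$, with $h\in\h$, to rebalance the scaling of the columns. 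The claim to prove is that the off-diagonal entries of $n$ are governed by cotangents of the angles $\angle(E_i,F_{d-i})$. Concretely, I would proceed by induction on $d$. Restrict to $E_{d-1}$ and to the quotient by $L_1$, apply the induction hypothesis there, and glue the two pieces together. Each gluing step introduces one factor of the form $1/\sin\angle(E_i,F_{d-i})$, and the normalization via $\exp\h$ keeps these factors from multiplying together. Once this is done, the lower bound $\|\psi\|\gtrsim 1/\theta$ is immediate. Indeed, $\psi$ maps the two subspaces $E_i$ and $F_{d-i}$, which meet at angle $\theta$, to the orthogonal coordinate subspaces $\langle e_1,\dots,e_i\rangle$ and $\langle e_{i+1},\dots,e_d\rangle$. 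Since $\psi\in\SL(d,\C)$, it follows that $\|\psi\|\,\|\psi^{-1}\|\gtrsim 1/\theta$.
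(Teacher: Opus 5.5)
Your proposal has two genuine problems, one for each direction of the estimate. For reference, the paper only cites Bochi--Potrie--Sambarino, and only the upper bound $\|\psi\|\lesssim 1/\theta$ is used later, in Lemmas \ref{minimo} and \ref{eigandsing}.

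\textbf{The upper bound is never proved.} Your Gram--Schmidt and induction scheme is only announced. Its whole content, namely that each gluing step costs exactly one factor $1/\sin\angle(E_i,F_{d-i})$ and that the $\exp\h$ normalization keeps these factors from multiplying, is asserted without argument. Moreover, the ``obstacle'' that sends you there comes from a lossy estimate. For $\delta_i=\operatorname{dist}(v_i,\operatorname{span}_{j\ne i}v_j)$ you only claim $\delta_i\gtrsim\theta^2$, but in fact $\delta_i\ge\frac12\sin\theta$. To see this, write $v_i=e+f+\delta_i n$ with $e\in E_{i-1}$, $f\in F_{d-i}$, and $n$ a unit normal to the hyperplane $E_{i-1}\oplus F_{d-i}$, with its phase chosen so the coefficient is $\delta_i$.
\begin{itemize}
\item The vector $v_i-e=f+\delta_i n$ lies in $E_i$ and is within $\delta_i$ of $F_{d-i}$, so $|v_i-e|\sin\theta\le\delta_i$.
\item The vector $v_i-f=e+\delta_i n$ lies in $F_{d-i+1}$ and is within $\delta_i$ of $E_{i-1}$, so $|v_i-f|\sin\theta\le\delta_i$.
\item Adding, $2\delta_i/\sin\theta\ge|2v_i-e-f|=|v_i+\delta_i n|=\sqrt{1+3\delta_i^2}\ge 1$.
\end{itemize}
The rows of $P^{-1}$ have norms $1/\delta_i$, and $|\det P|\le 1$ by Hadamard. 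So your very first choice $\psi=(\det P)^{1/d}P^{-1}$ already satisfies $\|\psi\|\le\sqrt d\max_i\delta_i^{-1}\le 2\sqrt d/\sin\theta$, and no induction is needed.

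\textbf{The lower bound argument is wrong.} That $\psi$ sends $E_i$ and $F_{d-i}$ to orthogonal coordinate subspaces only bounds the condition number $\|\psi\|\,\|\psi^{-1}\|$ from below. In $\SL(d,\C)$ you only have $\|\psi^{-1}\|\le\|\psi\|^{d-1}$, which gives $\|\psi\|\gtrsim\theta^{-1/d}$, not $\theta^{-1}$. In fact the lower bound genuinely fails for natural choices. Take $d=2$, $v_E=e_1$, $v_F=(\cos\theta,\sin\theta)$, and let $\psi^{-1}$ be the matrix with columns $v_E,v_F$ scaled by $(\sin\theta)^{-1/2}$. This balanced choice is where your $\exp\h$ rebalancing leads, and its norm is within a factor $\sqrt2$ of the minimal possible one. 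For it, $\|\psi\|=\|\psi^{-1}\|=\sqrt{\cot(\theta/2)}$, so $\|\psi\|\theta\to 0$.

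The lower bound holds only because $\psi$ may be chosen non-optimally. The map $D\mapsto\|D\psi_0\|$ is continuous and unbounded on the connected torus $\exp\h$, so its image contains $[\|\psi_0\|,\infty)$. Starting from the $\psi_0$ above, either $\|\psi_0\|\theta\ge1$ already, or you can pick $D$ with $\|D\psi_0\|=1/\theta$. Either way $\|\psi\|\theta\in[1,\pi\sqrt d]$, using $\sin\theta\ge 2\theta/\pi$, so any $K>\pi\sqrt d$ works.
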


\begin{remark}[When $d=2$] If $g\in \PSL(2,\C)$ is loxodromic, then there exists $\psi\in \PSL(2,\C)$ so that
$\psi g\psi^{-1}=\pm\begin{bmatrix} \lambda & 0\\ 0 & \lambda^{-1}\end{bmatrix}$ where $|\lambda|>1$. So,
$$V^0(g):=\mathrm{Fix}\,(\Ad(g))=\Ad(\psi^{-1})\h=\left\langle\psi^{-1}\begin{bmatrix} 1 & 0\\ 0 & {-1}\end{bmatrix}\psi\right\rangle.$$
The other two eigenspaces of $\Ad(g)$ are 
$$\left\langle\psi^{-1}\begin{bmatrix} 0 & 1\\ 0 & 0\end{bmatrix}\psi\right\rangle\quad\text{and}\quad
\left\langle\psi^{-1}\begin{bmatrix} 0 & 0\\ 1 & 0\end{bmatrix}\psi\right\rangle$$
with eigenvalues $\lambda^2$ and $\lambda^{-2}$ respectively. Moreover, $E(g)$ is the attracting eigenline of $g$
and $F(g)$ is the repelling eigenline. Notice that the opposition involution is the identity map when $d=2$, so $\sf m\big((g,x)^{-1}\big)=\sf m(g,x).$
\end{remark}

An element $(g,x)\in\PSL(d,\C)\ltimes\sl(d,\C)$ induces an affine transformation 
$$\aff{(g,x)}:\sl(d,\C)\to\sl(d,\C)\quad\text{by}\quad \aff{(g,x)}v=\Ad(g)v+x.$$
When the angles between the attracting/repelling flags of $g$ are uniformly bounded below, then the displacement of $\aff{(g,x)}$ is controlled by the Margulis invariant:

\begin{lemma}[Margulis invariant as least displacement]
\label{minimo}  Given $C>0$  and $d\in \mathbb N$,  there exists $c>0$ such that if $g\in\PSL(d,\C)$ is loxodromic
and 
$$\min_{i\in\lb1,d-1\rb}\angle(E_i(g),F_{d-i}(g))\ge C,$$
then 
$$\big\|\aff{(g,x)}v-v\big\|\geq c\|\sf m(g,x)\|.$$
for any $x,v\in\sl(d,\C)$.
\end{lemma}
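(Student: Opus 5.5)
The plan is to write $\aff{(g,x)}v-v=\Ad(g)v-v+x$ and decompose along the $\Ad(g)$-invariant splitting $\sl(d,\C)=V^0(g)\oplus W(g)$. Since $\Ad(g)$ fixes $V^0(g)$ pointwise and preserves $W(g)$, we have
$$\pi^0_g\big(\aff{(g,x)}v-v\big)=\pi^0_g\big((\Ad(g)-\Id)v\big)+\pi^0_g(x)=\pi^0_g(x).$$
Indeed, $(\Ad(g)-\Id)$ kills the $V^0(g)$-component of $v$ and maps the $W(g)$-component into $W(g)$. So the $V^0$-projection of the displacement does not depend on $v$, and it equals $\pi^0_g(x)=\Ad(\psi^{-1})\sf m(g,x)$.

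It therefore suffices to produce $c>0$ with
$$\|y\|\ge c\,\|\Ad(\psi)\pi^0_g(y)\|\quad\text{for all } y\in\sl(d,\C).$$
Apply this with $y=\aff{(g,x)}v-v$. Now write $\Ad(\psi)\pi^0_g=\pi^0_{\mathrm{std}}\circ\Ad(\psi)$, where $\pi^0_{\mathrm{std}}$ is the fixed projection of $\sl(d,\C)$ onto the diagonal subalgebra $\h$ along the off-diagonal matrices. This identity holds because $\Ad(\psi)$ carries $V^0(g)$ to $\h$ and $W(g)$ to the sum of root spaces, which is the kernel of $\pi^0_{\mathrm{std}}$. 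The operator $\pi^0_{\mathrm{std}}$ has norm $1$ in the standard Hilbert–Schmidt-type norm. This gives
$$\|\sf m(g,x)\|=\|\pi^0_{\mathrm{std}}\Ad(\psi)y\|\le\|\Ad(\psi)\|\,\|y\|.$$

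The remaining step is to bound $\|\Ad(\psi)\|$ uniformly. Choose $\psi$ as in Lemma \ref{control-psi}; this is legitimate since $\sf m$ is independent of the choice of $\psi$. Then $\|\psi\|<K/\min_i\angle(E_i(g),F_{d-i}(g))\le K/C$. We also need $\|\psi^{-1}\|$, since $\|\Ad(\psi)\|\le\|\psi\|\,\|\psi^{-1}\|$. For $\psi\in\SL(d,\C)$, $\|\psi^{-1}\|\le\|\psi\|^{d-1}$ by the singular value bound, as the product of the singular values is $1$. Hence $\|\Ad(\psi)\|\le (K/C)^d$, and we may take $c=(C/K)^d$ up to norm-comparison constants.

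The main obstacle is the normalisation of norms on $\sl(d,\C)$ and the control of $\|\psi^{-1}\|$. The Bochi–Potrie–Sambarino lemma as quoted only bounds $\|\psi\|$. If one instead reads it as controlling the distance of $\psi$ to $\PSL(d,\C)$-compact cosets, the determinant trick above still converts it into a two-sided bound. All the remaining steps are formal linear algebra with constants depending only on $d$, $C$ and $K$.
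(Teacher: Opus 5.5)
Your proposal is correct and follows the same route as the paper: you show that the $V^0(g)$-component of the displacement is $\pi^0_g(x)$ independently of $v$, and you control the conjugating element $\psi$ via Lemma \ref{control-psi}. The only difference is presentational. The intertwining identity $\Ad(\psi)\circ\pi^0_g=\pi^0_{\mathrm{std}}\circ\Ad(\psi)$, with $\pi^0_{\mathrm{std}}$ of norm one, merges the paper's two estimates (the angle bound on $\pi^0_g$ and the bound on $\Ad(\psi)$) into one. Your bound $\|\psi^{-1}\|\le\|\psi\|^{d-1}$ then makes explicit the control of $\|\Ad(\psi)\|$ that the paper's write-up leaves implicit.
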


\begin{proof} Since the angles of the attracting/repelling flags of $g$ are bounded below by $C,$ the angles between the spaces in the decomposition 
$$\sl(d,\C)=V^0(g)\oplus W(g)$$ 
are uniformly bounded below. Thus there exists $k>0$ (depending only on $C$ and $d$) such that $\|u\|\geq \|\pi^0_g(u)\|$
for every $u\in\sl(d,\C)$.
In particular, if  $u=\aff{(g,x)}v-v$ we obtain
\begin{equation}
\label{primera}
\|\aff{(g,x)} v-v\|\geq k\big\|\pi^0_g(\aff{(g,x)}v-v)\big\|.
\end{equation} 
Now observe that, since $\Ad(g)|_{V^0(g)}=\mathrm{id}$, we have
 $$\pi^0_g\big(\aff{(g,x)}v-v\big)=\pi^0_g\big(\Ad(g)v+x-v)= \pi^0_g(x),$$ 
It follows from the  definition of $\sf m$ that $$\Ad(\psi)\big(\pi^0_g\big(\aff{(g,x)}v-v)\big)=\sf m(g,x).$$
for all $v\in\sl(d,\C)$, where $\psi$ is an element conjugating $g$ into standard form.
By Lemma  \ref{control-psi} we can chose $\psi$ so that $\|\psi\|<K,$ where $K$ only depends on $C$ and $d$.
Therefore
$$\|\pi^0_g(\aff{(g,x)}v-v)\|\geq K^{-1}\big\|\Ad(\psi)(\pi^0_g(\aff{(g,x)}v-v)\big\|=K^{-1}\|\sf m(g,x)\|,$$ which combining with \eqref{primera} finishes the proof.
\end{proof}

Let $$\sigma_1(g)\geq\cdots\geq\sigma_d(g)$$ be the singular values of $g$ (with respect to the standard Hermitian metric on $\mathbb C^d$).
The \emph{Cartan projection} 
$$\kappa:\PSL(d,\C)\to \mathfrak a^+\quad\text{is given by}\quad  \kappa(g)=\diag(\log \sigma_i(g)).$$
We observe that when the angles between the attracting/repelling flags of $g$ are uniformly bounded below then one can bound the
ratio of the top eigenvalue and the top singular value.

\begin{lemma}
\label{eigandsing}  
Given $C>0$  and $d\in \mathbb N$,  there exists $b>0$ such that if $g\in\PSL(d,\C)$ is loxodromic
and 
$$\min_{i\in\lb1,d-1\rb}\angle(E_i(g),F_{d-i}(g))\ge C,$$
then 
$$\big| \omega_1(\mu(g))-\log\sigma_1(g)\big|<b$$
where $\omega_1(\mu(g))=\log |\lambda_1(\tilde g)|$.
\end{lemma}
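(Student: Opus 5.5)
We use the same device as in Lemma~\ref{minimo}: conjugate $g$ into standard form by a well-chosen $\psi$ and control its norm via Lemma~\ref{control-psi}. Write $\tilde g=\tilde\psi^{-1}\,\tilde D\,\tilde\psi$ with $\tilde D=\diag(b_1,\ldots,b_d)$ and $|b_1|>\cdots>|b_d|$, so that $|b_1|=|\lambda_1(\tilde g)|$. By Lemma~\ref{control-psi} and the hypothesis $\min_i\angle(E_i(g),F_{d-i}(g))\ge C$, we may choose $\psi$ with $\|\psi\|<K/C$, where $K$ depends only on $d$.

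The main obstacle is that we also need a bound on $\|\psi^{-1}\|$. Lemma~\ref{control-psi} bounds only $\|\psi\|$. In $\SL(d,\C)$, however, $\|\psi^{-1}\|=\sigma_d(\psi)^{-1}$, and since $\prod_i\sigma_i(\psi)=1$ we get $\sigma_d(\psi)^{-1}=\prod_{i<d}\sigma_i(\psi)\le\|\psi\|^{d-1}$. Hence $\|\psi^{-1}\|\le (K/C)^{d-1}$. (If the normalization in Lemma~\ref{control-psi} were instead for some other representative, one can rescale $\tilde\psi$ to have determinant $1$; I expect this to be consistent with how the lemma is stated.)

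The estimate then follows from submultiplicativity. From $\tilde g=\tilde\psi^{-1}\tilde D\tilde\psi$ we get
$$\sigma_1(g)=\|\tilde g\|\le\|\psi^{-1}\|\,\|\tilde D\|\,\|\psi\|\le (K/C)^{d}\,|\lambda_1(\tilde g)|,$$
using $\|\tilde D\|=|b_1|$. In the other direction, the spectral radius is at most the operator norm, so $|\lambda_1(\tilde g)|\le\|\tilde g\|=\sigma_1(g)$. Taking logarithms,
$$0\le\log\sigma_1(g)-\omega_1(\mu(g))\le d\log(K/C),$$
so any $b>\max\{d\log(K/C),0\}$ works. This $b$ depends only on $C$ and $d$, as required.
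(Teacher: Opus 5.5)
Your proposal is correct and follows the paper's proof essentially verbatim: you choose $\psi$ via Lemma~\ref{control-psi} so that $\|\psi\|$ is bounded in terms of $C$ and $d$, use $\|\psi^{-1}\|\le\|\psi\|^{d-1}$ (which follows from $\det\tilde\psi=1$), and apply submultiplicativity to $\tilde g=\tilde\psi^{-1}\tilde D\tilde\psi$. You also make explicit the reverse inequality $|\lambda_1(\tilde g)|\le\sigma_1(g)$ (spectral radius is at most operator norm), which the paper leaves implicit even though the absolute value in the statement needs it.
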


\begin{proof}
Lemma \ref{control-psi} implies that there exists $K>1$, which depends only on $C$ and $d$, so that
$\psi g\psi^{-1}$ is in standard form and $||\psi||<K$. Now notice that $|\omega_1(\mu(g))|=\|\psi g\psi^{-1}\|$
and 
$$\sigma_1(g)=||g||\le \|\psi^{-1}\| \cdot \|\psi g\psi^{-1}\|\cdot \|\psi\| \le \|\psi\|^d \cdot \omega_1(\mu(g)) \le K^d |\mu_1(g)|$$
since $\|\psi^{-1}\|\le\|\psi\|^{d-1}$. So, our result hold with $b=d\log K$.
\end{proof}

\subsection{Totally Anosov representations in $\PSL(d,\mathbb C)$}\label{Anosovpreliminaries1} 
Anosov representations were introduced by Labourie \cite{labourie} and have been a very active topic of research ever since. 
The surveys by Kassel \cite{KasselICM} and
Wienhard \cite{WienhardICM}  give an overview of the topic so we will focus on the facts that we need.

Let $\G$ be a finitely generated group and let $|\,|$ denote the word-length on $\G$ associated to some fixed  finite generating set. 
A representation $\rho:\G\to\PSL(d,\C)$ is \emph{totally Anosov} if there exist positive $C,c$ such that for every $k\in\lb1,d-1\rb$ and $\g\in\G$ one has 
$$\frac{\sigma_{k+1}(g)}{\sigma_{k}(g)}\leq Ce^{-c|\g|}.$$
This characterization of totally Anosov representations was established by Kapovich-Leeb-Porti \cite{KLP} and Bochi-Potrie-Sambarino \cite{BPS}. Totally Anosov representations into $\PSL(d,\C)$  are often called Borel Anosov, since they are Anosov with respect to the Borel subgroup of $\PSL(d,\C)$ 

The space of totally Anosov representations will be denoted by 
$$\frak A_{\sf\Delta}(\G,\PSL(d,\C)).$$ 
It is an open subset of the character variety (see \cite{labourie} and \cite{GW}). Moreover, if $\Gamma$ admits a totally Anosov representation,
then $\G$ is necessarily word-hyperbolic (see \cite{KLP}) and there exists a continuous equivariant map $\xi:\partial\G\to\cal F(\C^d)$ such that if $x,y\in\partial\G$ are distinct, 
then $\xi(x)$ and $\xi(y)$ are transverse flags. Moreover, for every infinite order element $\g\in\G$,  its image $\rho(\g)$ is loxodromic and its attracting/repelling flags are, 
respectively, $\xi(\g_+)$ and $\xi(\g_-)$ (see \cite{labourie} and \cite{GW}). In particular, totally Anosov representations into $\PSL(d,\C)$ are natural
generalizations of convex cocompact representations into $\PSL(d,\C)$.

\subsection{A criterion for proper affine actions}
\label{criterion}

If $\rho:\G\to\PSL(d,\C)$ is totally Anosov, a \emph{cocycle} for $\Ad\rho$ is a map $\co:\G\to\sl(d,\C)$ such that for every $\g,\eta\in\G$ one has 
$$\co(\g\eta)=\co(\g)+\Ad\rho(\g)\co(\eta).$$ 
This equation implies that the map 
$$(\rho,\co):\G\to\PSL(d,\C)\ltimes\sl(d,\C)=\mathrm{Aff}(\sl(d,\C))$$
 is a representation and we consider the associated group of affine transformations $\aff{(\rho,\co)}(\G)$ given by 
 $$\aff{(\rho,\co)}(\g)=\aff{(\rho(\g),\co(\g))}.$$ 
 The most common way to construct a cocycle is to consider a smooth family $\{\rho_t:\Gamma\to\PSL(d,\C)\}$ and
 let 
 $$\co(\gamma)=\frac{d}{dt}\Big|_{t=0}\rho_t(\g)\rho_0(\g)^{-1}\in\sl(d,\C).$$
 It is easy to check that $\co$ is a cocycle for $\Ad\rho$.

The  {\em normalized Margulis spectra} is the set 
$$\M(\rho,\co)=\overline{\Big\{\frac{\sf m(\rho(\g),\co(\g))}{\omega_1(\mu(\rho(g)))} :\g\in\G \text{ has infinite order}\Big\}}$$ 
where $\omega_1(\mu(\rho(g)))= \log|\lambda_1(\widetilde{\rho(g)})|$. 
It is always a compact convex subset of $\h.$

The following is a particular case of a result of  Kassel-Smilga \cite{KS}, see also Ghosh \cite{Ghoshdef}. We include a proof for completeness. 

\begin{proposition}[Properness Criteria]\label{notzero} 
Suppose that $\rho:\Gamma\to\PSL(d,\mathbb C)$ is totally Anosov and $\co:\Gamma\to\sl(d,\C)$ is a cocycle for $\rho$.
If $0\notin\M(\rho,\co),$ then $\aff{(\rho,\co)}(\G)$ acts properly discontinuously on $\sl(d,\C)$.
\end{proposition}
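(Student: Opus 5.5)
We argue by contradiction. Suppose the action of $\aff{(\rho,\co)}(\G)$ is not properly discontinuous. Then there is a compact set $B\subset\sl(d,\C)$ and infinitely many distinct $\g_n\in\G$ with $\aff{(\rho,\co)}(\g_n)B\cap B\neq\emptyset$. Choose $v_n\in B$ with $\aff{(\rho,\co)}(\g_n)v_n\in B$. Then the displacements $\|\aff{(\rho,\co)}(\g_n)v_n-v_n\|$ are bounded by $\mathrm{diam}(B)$, while $|\g_n|\to\infty$.

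The aim is to turn this bounded displacement into the statement that the normalized Margulis invariants of suitable infinite order elements tend to $0$. This contradicts $0\notin\M(\rho,\co)$. Lemma~\ref{minimo} does exactly this, but only for loxodromic elements whose attracting and repelling flags are separated by a uniform angle $C$. Arbitrary $\g_n$ need not have this property.

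The standard fix uses the hyperbolicity of $\G$. There is a finite set $F\subset\G$ such that for every $\g$ some $f\in F$ makes $f\g$ of infinite order with $\g_+,\g_-$ for $f\g$ uniformly separated in $\partial\G$. Moreover $|f\g|\ge|\g|-c_0$. Since the boundary map $\xi$ is continuous and its images at distinct points are transverse, this separation gives
$$\min_i\angle\big(E_i(\rho(f\g)),F_{d-i}(\rho(f\g))\big)\ge C$$
for a uniform $C>0$.

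Set $h_n=f_n\g_n$. Then
$$\aff(h_n)v_n-v_n=\aff(f_n)\big(\aff(\g_n)v_n\big)-v_n.$$
Here $\aff(\g_n)v_n\in B$ and $f_n$ ranges over the finite set $F$, so these displacements stay bounded by some $R$. Lemma~\ref{minimo} then gives
$$\|\sf m(\rho(h_n),\co(h_n))\|\le R/c.$$

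For the denominator, Lemma~\ref{eigandsing} gives
$$\omega_1(\mu(\rho(h_n)))\ge\log\sigma_1(\rho(h_n))-b.$$
The totally Anosov property makes $\log\sigma_1(\rho(h_n))$ grow at least linearly in $|h_n|$, since $\sigma_1\ge(\sigma_1/\sigma_d)^{1/d}$ and the ratio $\sigma_1/\sigma_d$ grows exponentially. As $|h_n|\to\infty$, we get $\omega_1(\mu(\rho(h_n)))\to\infty$. Hence $\sf m(\rho(h_n),\co(h_n))/\omega_1(\mu(\rho(h_n)))\to0$. This places $0$ in the closure $\M(\rho,\co)$, which is the contradiction.

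The main obstacle is the uniform transversality step: producing $f\in F$ with the flags of $\rho(f\g)$ uniformly transverse. The plan is to use the standard argument for hyperbolic groups. Pick a finite set of elements whose pairs of fixed points are in general position, so that for any $\g$ some $f\in F$ makes the axis of $f\g$ pass near the identity with well-separated endpoints. Alternatively, one could cite this fact from the Anosov literature (Kassel--Smilga, or Guichard--Wienhard).
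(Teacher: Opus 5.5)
Your proposal is correct and is essentially the paper's argument. It uses the same three ingredients: a finite set making $\rho(\alpha\g)$ uniformly transverse, Lemma~\ref{minimo} to bound the Margulis invariant by the displacement, and Lemma~\ref{eigandsing} to compare $\omega_1(\mu(\cdot))$ with $\log\sigma_1$. The differences are cosmetic. The paper cites the transversality step from Guichard--Wienhard and Abels--Margulis--Soifer rather than deriving it from ping-pong on $\partial\G$ and the boundary map. The paper also argues directly: it uses $\|\sf{m}\|\ge a\,\omega_1$ to bound $\log\sigma_1(\rho(\g))$ on the return set and then invokes finiteness. You instead argue by contradiction, using the Anosov growth of $\sigma_1$ to force the normalized Margulis invariants of $f_n\g_n$ to $0$.
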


\begin{proof} We follow the outline of Smilga \cite{Smig}. 
We simplify notation by letting $\aff{}:=\aff{(\rho,\co)}$ throughout the proof. 
Since $0\notin\M(\rho,\co),$ there exists $a>0$ so that
$$\|m(\rho(\g),\co(\g))\|\ge a\omega_1(\mu(\rho(\g)))$$
for all $\g\in\G$.

Guichard and Wienhard \cite[Thm. 5.10]{GW} used work of Abels, Margulis and Soifer \cite{AMS} to show that if $\rho$ is 
totally Anosov, then there exists a finite subset $A\subset\G$  and $C>0$ so that if $\gamma\in \G$, then there exists $\alpha\in A$ so that 
$\rho(\alpha\g)$ is loxodromic and
\begin{equation}
\label{alsocrucial}
\min_{i\in\lb1,d-1\rb}\angle(E_i(\alpha\g),F_{d-i}(\alpha\g))\ge C.
\end{equation}
Lemmas \ref{minimo} and \ref{eigandsing} then imply that there exists $b,c>0$ so that for all 
$\g\in\G$ there exists $\alpha\in A$ so that
\begin{equation}
\label{crucial}
\|\aff{}(\alpha\g)v-v\|\geq c\big\|\sf m\big(\rho(\alpha\g),\co(\alpha\g)\big)\big\| \text{ for all } v\in\sl(d,\C) \quad\text{and}\quad
|\omega_1(\mu(\rho(\alpha\g))-\log\sigma_1(\rho(\alpha\g)|<b.
\end{equation}

Let $K\subset \sl(d,\C)$ be a compact set and $S = \{\gamma\ | \ \aff{} (\g)K\cap K\neq\emptyset\}$. 
We consider the compact set 
$$K'=\bigcup_{\alpha\in A}\aff{}(\alpha)(K).$$ 
If $\gamma \in S$, then we can choose $\alpha\in A$ so that $\rho(\alpha\g)$ is loxodromic and satisfies Equation \eqref{crucial}.
Since $\gamma\in S$, we see that 
$\aff{} (\alpha\g)(K)\cap \aff{}(\alpha)(K)\neq\emptyset$, so $\aff{}(\alpha\g)K\cap K'\neq\emptyset.$ 
Therefore, combining Equations \eqref{alsocrucial} and \eqref{crucial}, we see that
$$\omega_1(\mu(\rho( \alpha\g))) \leq \frac{1}{ac}\inf_{v\in\sl(d,\C)} \|\aff{}(\alpha\g)v-v\| \leq \frac{1}{ac}\mbox{diam}(K\cup K') = C_1,$$ 
so, again by Equation \eqref{crucial},
$$\log\sigma_1(\rho(\alpha\g)) <b+C_1.$$
If we set $d=\max_{\alpha\in A}\log\sigma_1(\alpha)$, then we conclude that
$$\log\sigma_1(\rho(\g)) <b+C_1+d.$$
Since there are only finitely many elements of $\Gamma$ such that  $\log\sigma_1(\rho(\g)) <b+C_1+d$, we
conclude that  $S$ is finite and so $F$ is proper.

\end{proof}

\medskip\noindent
{\bf Remark:} If $d=2$ and $\rho$ is quasifuchsian  it is easy to use a ping-pong argument to produce two elements $\alpha_1$ and $\alpha_2$ 
so that if $\gamma\in \rho(\pi_1(S))$, then the attracting and repelling fixed points of $\alpha_i\gamma$ are uniformly separated for either $i=1$ or $i=2$.

\subsection{Proper affine actions on $\sl(2,\C)$}
\label{conclusion}
In order to apply our Properness criterion and prove Theorem \ref{Main Theorem 3},
we must be able to compute the Margulis invariant. We first introduce some notation.

If $\tilde g\in \SL(d,\C)$ is loxodromic,  we define 
$$\lambda:\SL(d,\C)\to\SL(d,\C)\quad \text{by}
\quad\lambda(\tilde g)=\mathrm{diag}\big(\lambda_1(\tilde g),\ldots,\lambda_d(\tilde g)\big).$$
Recall that $\lambda$ is an analytic function on the open set of loxodromic elements in $\SL(d,\C)$.
If $g\in\PSL(d,\C)$ is loxodromic and $\dot g\in\sf T_g\PSL(d,\C)$, let $\{g_t\}_{t\in (-\epsilon,\epsilon)}$ be a smooth family of 
loxodromic elements  in $\PSL(d,\C)$ so that $g_0=g$ and $\frac{d}{dt}\big|_{t=0} g_t=\dot g$. One can lift
$\{g_t\}_{t\in (-\epsilon,\epsilon)}$ to a smooth family $\{\tilde g_t\}_{t\in (-\epsilon,\epsilon)}$  of loxodromic elements
of $\SL(d,\C)$. We abuse notation by defining
$$\big({\rm {d}}\lambda(\dot g)\big)\lambda(g)^{-1}=\left({\rm {d}}\lambda\left(\frac{d}{dt}\Big|_{t=0} \tilde g_t\right)\right)\lambda\big(\tilde g_0^{-1}\big)\in\h.$$
Since any two lifts differ by multiplication by an element of $Z_d$, this is well-defined.
We also let
$$x_g:=\dot g g^{-1}\in\sl(d,\C).$$

The following computation was first performed by Goldman and  Margulis \cite{goldman-margulis} when $\sf G=\mathsf{SO}(2,1)$.
We give a version which appears in  Sambarino \cite[Cor. 8.3]{sambarino}. 
Similar computations can also be found in Ghosh \cite{Ghoshdef,avatars}, Kassel-Smilga \cite{KS} and
Danciger-Zhang \cite{DZ}. We include a proof for completeness.

\begin{proposition}
\label{eigenvaluevariation} 
If $g\in\PSL(d,\C)$ is  loxodromic and $\dot g\in\sf T_g\PSL(d,\C),$ then 
$$\sf m(g,x_g)=\big({\rm {d}}\lambda(\dot g)\big)\lambda(g)^{-1}$$
In particular,
 $$\Re\Big(\sf m(g,x_g)\Big)=\dd\mu(\dot g).$$
\end{proposition}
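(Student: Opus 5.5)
We will work in standard form: since both sides are conjugation invariant in the appropriate sense, we conjugate so that $g$ is diagonal with entries decreasing in modulus. We choose a smooth family $g_t$ of loxodromic elements with $g_0=g$ and $\dot g_0=\dot g$, and lift to $\SL(d,\C)$. The key observation is that, because $g$ is loxodromic, the eigenvalues and eigenlines vary analytically near $g$. So we can write $g_t=\psi_t^{-1}\lambda(g_t)\psi_t$ with $\psi_t$ smooth, $\psi_0$ conjugating $g$ into standard form, and after conjugating once we may assume $\psi_0=\Id$. Then $g=\lambda(g)$ is diagonal, $V^0(g)=\h$, and $\pi^0_g$ is the projection of $\sl(d,\C)$ onto the diagonal part, whose kernel is the off-diagonal matrices. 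Consequently $\sf m(g,x)$ is simply the diagonal part of $x$.

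Next we differentiate $g_t=\psi_t^{-1}\lambda(g_t)\psi_t$ at $t=0$. Writing $X=\dot\psi_0\in\sl(d,\C)$ and $\dot\lambda=\frac{d}{dt}|_{t=0}\lambda(g_t)$, we obtain
$$\dot g=-X\lambda(g)+\dot\lambda+\lambda(g)X,$$
so
$$x_g=\dot g g^{-1}=\dot\lambda\lambda(g)^{-1}+\big(\Ad(\lambda(g))X-X\big).$$
The term $\Ad(\lambda(g))X-X$ lies in the image of $\Ad(g)-\Id$. This image is exactly $W(g)$, the sum of the eigenspaces with eigenvalue different from $1$; in the diagonal frame it consists of the off-diagonal matrices. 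Hence its diagonal part vanishes and $\pi^0_g(x_g)=\dot\lambda\lambda(g)^{-1}$, which already lies in $\h$. Since $\psi_0=\Id$, this gives $\sf m(g,x_g)=(\dd\lambda(\dot g))\lambda(g)^{-1}$.

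For the real part, note that $(\dd\lambda(\dot g))\lambda(g)^{-1}$ is the diagonal matrix with entries $\dot\lambda_i/\lambda_i=\frac{d}{dt}\log\lambda_i(g_t)$. Its real part is therefore $\frac{d}{dt}\log|\lambda_i(g_t)|$, which is $\dd\mu(\dot g)$ because $\mu=(\log|\lambda_i|)$.

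The only delicate step is the smooth choice of $\psi_t$. This follows from the implicit function theorem, since the eigenvalues are simple, together with the fact that the normalisation by $\exp\h$ does not affect $\sf m$. A second point to check is that the choice of lift to $\SL(d,\C)$ changes nothing: a central factor cancels in both $\dot\lambda\lambda^{-1}$ and $x_g$.
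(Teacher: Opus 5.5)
Your proof is correct and follows essentially the paper's argument. You differentiate the smooth diagonalization $g_t=\psi_t^{-1}\lambda(g_t)\psi_t$, observe that $x_g$ differs from $\dd\lambda(\dot g)\lambda(g)^{-1}$ (up to conjugation) by a term $\Ad(g)X-X$ that $\pi^0_g$ kills, and read off the real part from $\Re(\dot z/z)=\frac{d}{dt}\log|z_t|$. The only cosmetic difference is that you first use conjugation invariance to arrange $\psi_0=\Id$, whereas the paper keeps $\psi$ and uses the intertwining relation $\pi^0\circ\Ad(\psi)=\Ad(\psi)\circ\pi^0_g$.
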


\begin{proof}
Let $\{g_t\}_{t\in (-\epsilon,\epsilon)}$ be a smooth family of loxodromic elements  in $\PSL(d,\C)$
so that $g_0=g$ and $\frac{d}{dt}\big|_{t=0} g_t=\dot g$. Lift
$\{g_t\}_{t\in (-\epsilon,\epsilon)}$ to a smooth family of loxodromic elements
of $\SL(d,\C)$  which we will further abuse notation by continuing to call $\{g_t\}_{t\in (-\epsilon,\epsilon)}$.
We may choose a smooth family $\{\psi_t\}_{t\in (-\epsilon,\epsilon)}$ in $\SL(d,\mathbb C)$
so that  $\mu(\g_t) = \psi_t  g_t\psi_t^{-1}$ for all $t$.
Let  $\psi=\psi_0$ and $\psi_t = \psi\phi_t$, so $\phi_0 = \Id$ and $\frac{d}{dt}\big| \phi_t=\dot\phi_0\in\frak{sl}(d,\C)$.
Differentiating the expression $\lambda(g_t) = \psi (\phi_t  g_t\phi_t^{-1})\psi^{-1}$ gives
$$\dd\lambda(\dot{g})  = \psi(\phi_0\dot g\phi_0^{-1}+\dot\phi_0 g_0- g_0\dot\phi_0)\psi^{-1} = 
\psi(\dot g+\dot\phi_0 g_0- g_0\dot\phi_0)\psi^{-1},$$
so
$$\dd\lambda(\dot{g})\lambda(g)^{-1} = \psi(\dot g +\dot\phi_0 g_0-  g_0\dot\phi_0) g_0^{-1}\psi^{-1} = 
\psi(\dot g g_0^{-1} + \dot\phi_0- g_0\dot\phi_0 g_0^{-1})\psi^{-1}.$$
Therefore
$$\dd\lambda(\dot{g})\lambda(g)^{-1} =  \Ad(\psi)(x_g + \dot\phi_0-\Ad(g)(\dot\phi_0)).$$

Recall that the  map $\pi^0_g$  is projection onto the 1-eigenspace $V^0(g)$ of $\Ad(g)$ parallel to  the sum $W(g)$ of the other eigenspaces.
Thus
$$\pi_{\psi g\psi^{-1}}^0  = \Ad(\psi)\circ \pi^0_g\circ \Ad(\psi^{-1}).$$ 
Since  $\psi g\psi^{-1}$ is diagonal and loxodromic, $\pi^0_{\psi g\psi^{-1}} = \pi^0$ where $\pi^0:\sl(d,\C)\rightarrow \frak h$ 
is  the projection to the diagonal obtained by changing all the non-diagonal entries to $0$. Therefore  
$$\pi^0 \circ \Ad(\psi) = \Ad(\psi)\circ \pi^0_g.$$
Applying $\pi^0$ to  both sides gives
$$\dd\lambda(\dot{g})\lambda(g)^{-1}= \pi^0(d\lambda(\dot{g})\lambda(g)^{-1}) = \Ad(\psi)\Big(\pi_g^0\big(x_g+\dot\phi_0-\Ad(g)(\dot\phi_0)\big)\Big).$$
Since $\pi_g^0\circ \Ad(g) = \pi_g^0$, we conclude that
 $$\mathrm d\lambda(\dot{g})\lambda(g)^{-1} = \Ad(\psi)(\pi_g^0(x_g)) = \sf m(g,x_g).$$

Our formula for $\Re\Big(\sf m(g,x_g)\Big)$ follows from the elementary fact that if $(z_t)_{t\in(-\eps,\eps)}$ is a smooth curve in $\C^*$,
then $\Re\big(\dot z/z\big)=\frac{\partial}{\partial t}\big|_{t=0}\log|z_t|.$
\end{proof}

With this computation in hand, we can combine Theorem \ref{Main Theorem 4} and Theorem \ref{properness criterion} 
to establish Theorem \ref{Main Theorem 3}.

\medskip\noindent
{\bf Theorem \ref{Main Theorem 3}.} {\em
If $[\rho]\in \QF(S)$ is not fuchsian and both $\Omega_+(\rho)$ and $\Omega_-(\rho)$ are moderately bent,
then there exists a representation $\sigma:\pi_1(S)\to\mathrm{Aff}(\sl(2,\mathbb C))$  whose linear part is $\Ad(\rho)$
so that $\sigma(\pi_1(S))$ acts properly discontinuously on $\sl(2,\C)$.}

\medskip

\begin{proof}
Let $w=w(\rho)=w_+(\rho)+w_-(\rho)$.
Let $\{\rho_t\}_{t\in (-\epsilon,\epsilon)}$ be a smooth family in $\mathrm{Hom}\big(\pi_1(S),\PSL)2,\C)\big)$ such that $\rho_0=\rho$ and 
$$\frac{d}{dt}\Big|_{t=0} \rho_t=w$$ 
Let $\co$ be the cocycle for $\Ad\rho$ given by 
 $\co(g)=\frac{d}{dt}\Big|_{t=0}\rho_t(g)\rho_0(g)^{-1}.$
Theorem \ref{Main Theorem 4} implies that there exists a $K > 0$ so that
$$\dd\ell_\g(w) \leq  -K \ell_\g(\rho)$$
for all $\g\in\pi_1(S)$. Since $\ell_\rho(\gamma)=2\log(\mu_1(\gamma))$, Proposition \ref{eigenvaluevariation}
implies that 
$$\Re\Big(\sf m(\rho(g),\co(g))\Big)=\Big(\dd\ell_\g(\co) ,-\dd\ell_\g(\co)\Big) $$
and so the first coordinate of $\frac{\Re\big(\sf m(\rho(g)\co(g))\big)}{\omega_1(\rho(g))}$ is always less than $-K$. 
Therefore, $0\notin\M(\rho,\co)$, so our Properness Criterion, 
Proposition \ref{notzero}, implies that the action $F_{\rho,\co}$ is proper. 
\end{proof}

\subsection{Proper affine actions on other complex Lie groups}
\label{others}
We first observe that one may generalize the proof of Theorem \ref{Main Theorem 3} into the setting of totally Anosov
representations into $\PSL(d,\C)$.
Let $\a$ denote the real part of $\h$ (i.e. the matrices in $\h$ with real entries) and let $\a^*$ be its dual space.
Suppose that $\Gamma$ is a finitely generated group.
If $\varphi\in\a^*$ and $\gamma\in\Gamma$,  consider the function 
$$\varphi^{\g}:\mathrm{Hom}\big(\G,\PSL(d,\C)\big)\to\a\quad\text{given by}\quad \varphi^{\g}:\rho\mapsto\varphi\big(\mu\big(\rho(\g)\big)\big).$$ 
If $(\rho_t)_{t\in(-\eps,\eps)}:\G\to\PSL(d,\C)$ is a smooth curve in $\frak X\big(\G,\PSL(d,\C)\big)$ so that $\rho_0$ is totally Anosov,
then $\varphi^\gamma$ is differentiable at 0, whenever $\gamma\in\Gamma$ has infinite order.

\begin{corollary}\label{properness criterion} Let $(\rho_t)_{t\in(-\eps,\eps)}$ be a smooth curve in $\mathrm{Hom}\big(\G,\PSL(d,\C)\big)$
such that $\rho=\rho_0$ is totally Anosov and let 
$\co:\G\to\frak g$ be the  cocycle given by  $\co(g)=\frac{d}{dt}\Big|_{t=0}\rho_t(g)\rho_0(g)^{-1}.$
If there exists $\varphi\in\a^*$ such that
$$\dd\varphi^\g (\co)\geq\omega_1(\mu(\rho(\g)))$$ 
for all $\gamma\in\Gamma$, then $\aff{(\rho,\co)}(\Gamma)$ acts properly discontinuously on $\sl(d,\C)$ is proper.
\end{corollary}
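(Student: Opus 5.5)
The plan is to reduce Corollary \ref{properness criterion} to the Properness Criterion, Proposition \ref{notzero}: it suffices to show that $0\notin\M(\rho,\co)$. The link between the hypothesis and the Margulis invariant is Proposition \ref{eigenvaluevariation}, applied to the curve $g_t=\rho_t(\g)$ for each infinite order $\g\in\Gamma$.

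First fix an infinite order $\g$. Since $\rho$ is totally Anosov, $\rho(\g)$ is loxodromic, and $\rho_t(\g)$ stays loxodromic for small $t$. Set $g=\rho(\g)$ and $\dot g=\frac{d}{dt}\big|_{t=0}\rho_t(\g)$, so that $x_g=\dot g g^{-1}=\co(\g)$. Proposition \ref{eigenvaluevariation} gives
$$\Re\big(\sf m(\rho(\g),\co(\g))\big)=\dd\mu(\dot g)=\frac{d}{dt}\Big|_{t=0}\mu(\rho_t(\g))\in\a.$$
Applying the linear functional $\varphi$ and using linearity, we obtain
$$\varphi\Big(\Re\,\sf m(\rho(\g),\co(\g))\Big)=\dd\varphi^\g(\co)\ \ge\ \omega_1(\mu(\rho(\g))).$$
Here $\dd\varphi^\g(\co)$ is read, as in the statement, as the derivative of $\varphi^\g$ along the curve.

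Next, extend $\varphi$ to the real-linear functional $\hat\varphi=\varphi\circ\Re$ on $\h$. This is continuous. Dividing the previous inequality by $\omega_1(\mu(\rho(\g)))$ is legitimate, since this quantity is positive for loxodromic elements: the top eigenvalue has modulus $>1$, because the determinant is $1$ and the moduli are distinct. The division shows that every element of the generating set of $\M(\rho,\co)$ satisfies $\hat\varphi\ge 1$. Because $\hat\varphi$ is continuous, the closure $\M(\rho,\co)$ also lies in the closed half-space $\{\hat\varphi\ge1\}$. This half-space does not contain $0$, so $0\notin\M(\rho,\co)$, and Proposition \ref{notzero} gives proper discontinuity.

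The main obstacle is bookkeeping rather than an essential difficulty. The step needing care is justifying the identification $\dd\varphi^\g(\co)=\varphi(\dd\mu(\dot g))$. This requires that $\mu\circ\rho_t(\g)$ be differentiable at $t=0$, which holds because the moduli of the eigenvalues of a loxodromic element vary analytically. It also requires that the abuse of notation in Proposition \ref{eigenvaluevariation}, which lifts to $\SL(d,\C)$, is harmless. Finite order elements do not enter the definition of $\M(\rho,\co)$, so they need no treatment.
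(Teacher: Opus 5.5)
Your proposal is correct and follows the paper's own argument. You reduce to Proposition~\ref{notzero}, use Proposition~\ref{eigenvaluevariation} to identify $\varphi\big(\Re\,\sf m(\rho(\g),\co(\g))\big)$ with $\dd\varphi^\g(\co)$, and conclude that $\M(\rho,\co)$ lies in the closed half-space $\{\varphi\circ\Re\ge 1\}$, which does not contain $0$. Your extra remarks (positivity of $\omega_1(\mu(\rho(\g)))$, and continuity of $\varphi\circ\Re$ to pass to the closure) simply make explicit what the paper's short proof leaves implicit.
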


\begin{proof}
By Proposition \ref{notzero} it suffices to show that $0\notin\M(\rho,\co).$ 
For $\g\in\G$ we have, by Proposition \ref{eigenvaluevariation} and our assumption imply that
$$\varphi\big(\Re\big(\sf m(\rho(\g),\co(\g))\big)\big)=\dd\varphi^{\g}(\co)\geq \omega_1(\mu(\rho(\g))).$$ 
Thus $\varphi\Big(\Re\big(\M(\rho,\co)\big)\Big)\subset[1,\infty),$ which implies that $0\notin\M(\rho,\co).$ 
\end{proof}

We now consider other complex simple Lie groups. If $\sf G$ is a complex simple Lie group with Lie algebra $\frak g$,
the concepts from the previous sections,  such as the Cartan algebra $\frak h$,
the Jordan projection $\mu:\sf G\to\frak a=\Re(\h),$  and the Cartan projection $\kappa:\sf G\to\frak a=\Re(\h)$ extend to this setting.  
Finally the Margulis invariant $\sf m(g,x)\in\h$ of $(g,x)\in\sf G\ltimes\frak g,$ can be extended to this more general setting. 
It will suffice for our purposes to recall that (after identifying $\sf G\ltimes\frak g$ with $\sf T \sf G$)   for $(g,x) = \dot g \in \sf T_g \sf G$,   
$$\Re\big( \sf m(g,x)\big) = \dd\mu(\dot g)$$
(see \cite[Corollary 8.3]{sambarino}).  
We refer the reader to the book of Benoist and Quint \cite{BQ-book} for the standard Lie algebra and Lie group concepts 
and to Smilga \cite{Smig} for a discussion of the Margulis invariant in this more general context.

In this setting it is convenient to consider the \emph{first fundamental restricted weight} $\varpi_1\in\frak a^*$ of $\sf G.$ 
The number $\varpi_1(\mu(g))$ replaces $\omega_1(\mu(g))$ (and coincides with it when $\sf G=\PSL(d,\C)$). 
We say that a representation $\rho:\G\rightarrow \sf G$ is totally Anosov if it is Anosov with respect to a minimal parabolic
subgroup of $\sf G$ (see \cite{GW}).
Let $ \frak A_{\sf\Delta}(\G,\sf G)$  be the subspace of the character variety $\frak X(\G,\sf G)$ consisting of totally Anosov representations. 

For a homomorphism $(\rho,u):\Gamma \to\sf G\ltimes\frak g$ the normalized Margulis spectrum is the subset of $\h$,
$$\M(\rho,\co)=\overline{\Big\{\frac{\sf m(\rho(\g),\co(\g))}{\varpi_1\big(\mu(\rho(\g))\big)}:\g\in\G\text{ has infinite order }\Big\}}.$$
We recall that the Margulis spectrum varies continuously.

\begin{proposition}[{Sambarino \cite[Lemma 2.34]{sambarino}}]\label{continuous}
The map $(\rho,\co)\mapsto \M(\rho,\co)$ is continuous on the open subset of $\mathcal C(\G,\sf G)$ where 
$\rho \in \frak A_{\sf\Delta}(\G,\sf G)$ (where the image lies in the space of compact subsets of $\h$ with the Hausdorff topology).
\end{proposition}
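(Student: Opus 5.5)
The plan is to derive continuity of $\M(\rho,\co)$ from the thermodynamic formalism of totally Anosov representations. The normalized spectrum will be written as the set of integrals of a Hölder potential over flow-invariant probability measures. Continuity of that potential in $(\rho,\co)$ then gives Hausdorff continuity of the set.

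First, fix $(\rho_0,\co_0)$ with $\rho_0\in\frak A_{\sf\Delta}(\G,\sf G)$. By the stability of Anosov representations (Labourie, Guichard--Wienhard), there is a neighborhood $\mathcal U$ of $\rho_0$ consisting of totally Anosov representations. On $\mathcal U$ the equivariant boundary maps $\xi_\rho$ vary continuously, indeed analytically, in the Hölder topology. Next, over the geodesic flow space $\mathsf U\G$ of $\G$ (a Hölder reparametrization-free model, as in Sambarino's work), I will construct two Hölder functions:
\begin{itemize}
\item a positive function $f_\rho$ whose periods are $\varpi_1(\mu(\rho(\g)))$;
\item an $\h$-valued function $F_{\rho,\co}$ whose periods are $\sf m(\rho(\g),\co(\g))$.
\end{itemize}
For $F_{\rho,\co}$, the construction goes through the flat bundle $\mathsf U\G\times_{\rho,\co}\frak g$ together with its $\Ad\rho$-invariant splitting $V^0\oplus W$ determined by the flags $\xi_\rho$. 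Taking a continuous section and projecting its derivative along the flow onto the $V^0$-component yields a cocycle whose integral over the periodic orbit of $\g$ is the Margulis invariant, exactly as in Goldman--Labourie--Margulis and Smilga. Both $f_\rho$ and $F_{\rho,\co}$ depend continuously on $(\rho,\co)$ in the Hölder norm, since the splitting depends only on $\xi_\rho$ and the section can be chosen locally uniformly.

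Then, by Livšic-type arguments and density of periodic orbit measures among invariant probability measures $\mathcal P$ (via the specification property of the transitive Anosov-like flow), I get
$$\M(\rho,\co)=\Big\{\tfrac{\int F_{\rho,\co}\,dm}{\int f_\rho\,dm}: m\in\mathcal P\Big\},$$
where $\mathcal P$ is compact and convex in the weak$^*$ topology. Indeed, the quotient evaluated at the normalized orbit measure of $\g$ equals the ratio of periods, and orbit measures are dense in $\mathcal P$. The map
$$(\rho,\co,m)\mapsto \frac{\int F_{\rho,\co}\,dm}{\int f_\rho\,dm}$$
is jointly continuous, because $f_\rho$ is bounded below by a positive constant locally uniformly. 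Hence the images of the fixed compact set $\mathcal P$ vary continuously in the Hausdorff topology. This is a standard consequence of uniform continuity on the compact set $\mathcal P\times(\text{compact neighborhood})$.

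The main obstacle is constructing $F_{\rho,\co}$ with the right periods and continuous dependence. The Margulis invariant requires conjugating into $\h$ via $\psi$, which is only defined up to $\exp\h$. My first attempt will be to use the trivialization of $V^0$ provided by the continuous family of transverse flag pairs $(\xi_\rho(x),\xi_\rho(y))$, which canonically identifies each fiber of $V^0$ with $\h$. I expect this to be well defined, since $\Ad(\exp\h)$ acts trivially on $\h$.
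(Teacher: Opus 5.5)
Your argument is correct and is essentially the thermodynamic-formalism argument behind the result the paper relies on; the paper gives no proof of its own and simply imports Sambarino's Lemma 2.34. The core idea is the same: $\M(\rho,\co)$ is the image of the fixed compact set of flow-invariant probability measures under $m\mapsto\int F_{\rho,\co}\,dm/\int f_\rho\,dm$, so sup-norm continuity of the two potentials in $(\rho,\co)$ gives Hausdorff continuity. You only need sup-norm (not H\"older) continuity, together with a locally uniform lower bound on $\int f_\rho\,dm$ over invariant measures; that bound follows from density of orbit measures and the linear lower bound $\varpi_1(\mu(\rho(\g)))\ge c\,\ell(\g)$ for Anosov $\rho$, so you never have to choose $f_\rho$ pointwise positive in a continuous way.
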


We are now ready to prove part (1) of Corollary \ref{complexgroups}. Recall that $\mathcal C(S,\sf G)$ is the space of pairs $(\rho,\co)$ where
$\rho:\pi_1(S)\to\sf G$ is a representation and $\co $ is a cocycle for $\Ad\rho$.

\begin{theorem}\label{complexgroups1} Let $\sf G$ be a complex simple Lie group with Lie algebra $\frak g$, then the
subset of $\mathcal C(S,\sf G)$  consisting of pairs $(\rho,\co)$
so that $F_{\rho,\co}(\pi_1(S))$ act properly on  $\frak g$ has non-empty interior.
\end{theorem}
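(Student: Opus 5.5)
The plan is to produce one pair $(\rho_0,\co_0)\in\mathcal C(S,\sf G)$ whose normalized Margulis spectrum avoids $0$, and then to use Proposition \ref{continuous} to spread properness to a neighborhood. Everything is built inside $\mathsf{PSL}(2,\C)$. Take the principal (or any irreducible) $\mathfrak{sl}_2$-triple in $\frak g$; it gives a homomorphism $\iota:\PSL(2,\C)\to\sf G$, after possibly passing to $\SL(2,\C)$ and lifting surface-group representations, which is harmless. Pick $[\rho]\in\QF(S)$, not fuchsian, lying in the neighborhood $V(S)$ of Corollary \ref{neighborhoods}, so both $\Omega_\pm(\rho)$ are moderately bent. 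Let $w=w_+(\rho)+w_-(\rho)$ and choose a smooth curve $\rho_t$ tangent to $w$. Set $\rho_0=\iota\circ\rho$ and let $\co_0$ be the cocycle of the curve $\iota\circ\rho_t$, so that $\co_0=d\iota\circ\co$.

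\textbf{Step 1: $\rho_0$ is totally Anosov.} I expect this to be the main obstacle. For the principal embedding, the composition of a convex cocompact representation with $\iota$ is Anosov with respect to the minimal parabolic. The reason is that the Cartan projection of $\iota(g)$ is $\kappa(g)$ times the coroot $H$ of the principal triple, and $H$ lies in the open Weyl chamber. Hence every simple root gap grows linearly in word length, which is the singular value characterization of Kapovich--Leeb--Porti and Bochi--Potrie--Sambarino.

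\textbf{Step 2: compute the spectrum.} By the same observation, $\mu(\iota(g))=\ell(g)\,H/2$ with $H\in\frak a$ fixed. Differentiating with $\Re\sf m(g,x)=\dd\mu(\dot g)$ gives
$$\Re\,\sf m(\rho_0(\g),\co_0(\g))=\tfrac12\,\dd\ell_\g(w)\,H .$$
Also $\varpi_1(\mu(\rho_0(\g)))=\tfrac12\ell_\g(\rho)\varpi_1(H)$, and $\varpi_1(H)>0$. Theorem \ref{Main Theorem 4} gives $\dd\ell_\g(w)\le -K\ell_\g(\rho)$. So every element of $\M(\rho_0,\co_0)$ has real part equal to $cH$ with $c\le -K/\varpi_1(H)<0$. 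Since $H\neq 0$, it follows that $0\notin\M(\rho_0,\co_0)$.

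\textbf{Step 3: spread to a neighborhood.} Total Anosovness is open (Labourie, Guichard--Wienhard). By Proposition \ref{continuous}, $\M(\rho,\co)$ varies continuously in the Hausdorff topology, and $\M(\rho_0,\co_0)$ is compact and misses $0$. Hence there is an open neighborhood $\mathcal U$ of $(\rho_0,\co_0)$ in $\mathcal C(S,\sf G)$ on which $\rho$ is totally Anosov and $0\notin\M(\rho,\co)$.

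For each pair in $\mathcal U$, the general-$\sf G$ analogue of Proposition \ref{notzero} gives properness. That analogue has the same proof: Lemma \ref{minimo}, Lemma \ref{eigandsing}, and the Abels--Margulis--Soifer/Guichard--Wienhard uniform transversality, with $\varpi_1$ in place of $\omega_1$ and $\Ad$ on $\frak g$ in place of $\mathfrak{sl}(d,\C)$. Alternatively, compose with an irreducible representation $\sf G\to\PSL(d,\C)$ that maps the minimal parabolic appropriately. Therefore $\mathcal U$ lies in the set of proper pairs, and that set has nonempty interior.
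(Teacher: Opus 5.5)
Your proposal is correct and is essentially the paper's argument. You push the quasifuchsian example of Theorem \ref{Main Theorem 4} through the principal embedding (lifting to $\SL(2,\C)$ when $\mathsf{G}\neq\mathrm{Inn}(\mathfrak g)$), show that the normalized Margulis spectrum misses $0$, and spread this to a neighborhood via openness of total Anosovness, Proposition \ref{continuous} and the Kassel--Smilga criterion. Your explicit checks that $\iota\circ\rho$ is totally Anosov and that $\Re\,\mathsf{m}$ is a negative multiple of the regular element $H$ fill in details the paper leaves implicit.

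The only inaccuracies are in asides you do not actually use. A non-principal $\mathfrak{sl}_2$-triple need not have $H$ regular. Composing with a representation $\mathsf{G}\to\PSL(d,\C)$ generally does not give a totally Anosov representation, since distinct weights can take equal values on $H$. So the direct $\mathsf{G}$-version of the properness criterion is the right tool.
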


\begin{proof}
We first give the proof in the case that $\sf G=\mathrm{Inn}(\frak g)$ is the group of inner autmorphisms of $\frak g$.
Theorem \ref{Main Theorem 3} implies there exists $\rho\in \QF(S)$ and a cocycle $\co$ for $\Ad\rho$
such that $0\notin\Re(\M(\rho,\co)).$  
Kostant defined an embedding $\tau:\PSL(2,\C)\to\mathrm{Inn}(\frak g)$,  called the principal embedding, which is unique up to conjugation.
If $\sf G = \PSL(d,\C)$, the principle embedding is the irreducible embedding (see Kostant \cite{Kostant}).  
The principal embedding $\tau:\PSL(2,\C)\to\mathrm{Inn}(\frak g)$  has the property that  
 $$\varpi_1(\mu(\tau(g))) = c_{\frak g} \omega_1(\mu(g))$$
 for all $g\in\PSL(2,\C)$,
where $c_{\frak g}>0$ is an explicit constant ($c_\frak g=d-1$ when $\frak g=\sl(d,\C)$).
The principal embedding $\tau$ induces an embedding 
$$\mathrm d\tau:\sl(2,\C)\to\frak g\quad\text{so that}\quad
\varpi_1\big(\Re(\M(\tau(\rho),\dd\tau\circ\co)\big)=\omega_1( \Re(\M(\rho,\co))),$$
so $0\notin\Re(\M(\tau(\rho),\dd\tau(\co))).$

Since the map $(\eta,{\vec v})\mapsto \M(\eta,{\vec v})$ is continuous when $\eta$ is totally Anosov 
and $0\notin\varpi_1(\Re(\M(\tau(\rho),\dd\tau(\co))))$, we conclude that the same holds for nearby $(\eta,{\vec v}).$ 
In particular, $0\notin\M(\eta,{\vec v})$ and thus the analogue of  Proposition \ref{properness criterion}  (see \cite{KS} or \cite[Prop. 7.2]{sambarino})
shows that $\aff{(\eta,\vec v)}(\pi_1(S))$ acts properly discontinuously on $\frak g$.

If $\sf G$ is not $\mathrm{Inn}(\frak g)$, then there is a covering map $p:\sf G\to \mathrm{Inn}(\frak g)$ and $\tau$ lifts to a
homomorphism $\tilde \tau:\SL(2,\C)\to \sf G$ such that  $\varpi_1(\mu(\tilde\tau(g))) = c_{\frak g} \omega_1(\mu(g))$.
Recall that every quasifuchsian representation
$\rho:\pi_1(S)\to\PSL(2,\C)$ lifts to a representation \hbox{$\tilde \rho:\pi_1(S)\to\SL(2,\C)$} (see Culler \cite{culler}).
Since $\Ad\tilde\rho=\Ad\rho$ and $\mathrm d\tilde\tau \circ u=\mathrm d\tau\circ u$,
we may complete the proof in this case using the same argument.
 \end{proof}

\section{Proper actions on the group manifold}

In this section, we study actions on the group manifold  of a Lie group via left and right multiplication. The final goal of the
section is to establish part (2) of Corollary \ref{complexgroups}.

The action of $\sf G\times\sf G$  on $\sf G$ via $(g,h)x=gxh^{-1}$ is transitive and the stabilizer of $e\in\sf G$ is the diagonal embedding 
$${\rm{Diag}}(\sf G)=\{(g,g):g\in\sf G\}\subset\sf G\times\sf G.$$ 
The {\em group manifold} of $\sf G$ is the quotient  $\sf G\times\sf G/\rm{Diag}(\sf G).$
The quotient may be identified with $\sf G$. 
The group manifold inherits a pseudo-Riemannian metric from the Killing form on $\mathfrak g$ which is invariant under right
and left multiplication by $\sf G$.
The most classical situation occurs when $\sf G=\PSL(2,\R)$, in which case
the group manifold is three-dimensional anti-de Sitter space.

Let $\Gamma$ be a finitely generated group. Given two representations $\rho,\eta:\G\to\sf G$, we let $\rho\times\eta:\G\to\sf G\times\sf G$ 
be the product representation  defined by $\g\mapsto(\rho(\g),\eta(\g)).$ The quotient action  of $\Gamma$ on the group manifold
is simply the action  given by right multiplication by $\rho(\g)$ and left multiplication by $\eta(\g)^{-1}$.

\subsection{Properness criteria}
Benoist\cite{BenoistK-criterion} and Kobayashi \cite{BKobayashi-criterion} developed a general criterion to determine when an action
on reductive homogeneous spaces are proper. When restricted to our situation we get the following criterion.

\begin{theorem}[{Benoist-Kobayashi properness criterion}]
\label{BK}
Let $\rho,\eta:\G\to\sf G$ be two representations and assume that $\rho\times\eta$ has discrete image and finite kernel. 
If for every $K>0$, the set 
$$\big\{\g\in\G:\|\kappa(\rho(\g))-\kappa(\eta(\g))\|\leq K\big\}$$ 
is finite, then $\rho\times\eta(\G)$ acts properly on the group manifold $\sf G\times\sf G/\rm{Diag}(\sf G)$.
\end{theorem}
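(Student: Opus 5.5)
The plan is to verify properness directly from the definition and reduce it to a standard ``uniform continuity'' property of the Cartan projection $\kappa$. We identify the group manifold $\sf G\times\sf G/{\rm Diag}(\sf G)$ with $\sf G$, so that $\gamma\in\G$ acts by $x\mapsto \rho(\g)\,x\,\eta(\g)^{-1}$. Properness means the following: for every compact $Q\subset\sf G$, the set
$$S_Q=\{\g\in\G:\ \rho(\g)Q\eta(\g)^{-1}\cap Q\neq\emptyset\}$$
is finite. Since the hypothesis is about finiteness of sublevel sets of $\|\kappa(\rho(\g))-\kappa(\eta(\g))\|$, it suffices to prove one thing. We must find a constant $K_Q$, depending only on $Q$, such that every $\g\in S_Q$ satisfies $\|\kappa(\rho(\g))-\kappa(\eta(\g))\|\le K_Q$.

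\textbf{Step 1.} If $\g\in S_Q$, then there are $q,q'\in Q$ with $\rho(\g)\,q\,\eta(\g)^{-1}=q'$, that is,
$$\rho(\g)=q'\,\eta(\g)\,q^{-1}.$$

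\textbf{Step 2.} The key input is the following estimate. For every compact $Q\subset \sf G$ there is $C_Q$ such that
$$\|\kappa(g_1 g g_2)-\kappa(g)\|\le C_Q\quad\text{for all } g\in\sf G \text{ and } g_1,g_2\in Q\cup Q^{-1}.$$
For $\sf G=\PSL(d,\C)$ this follows from the standard singular value inequalities. Indeed, $\sigma_i(g_1 g g_2)\le \|g_1\|\,\|g_2\|\,\sigma_i(g)$, and applying the same bound to $g=g_1^{-1}(g_1gg_2)g_2^{-1}$ gives the reverse inequality. Hence
$$|\log\sigma_i(g_1gg_2)-\log\sigma_i(g)|\le \log\|g_1\|+\log\|g_2\|+\log\|g_1^{-1}\|+\log\|g_2^{-1}\|,$$
and the right-hand side is bounded on a compact set. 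For a general complex simple (or reductive) $\sf G$, I would either cite the analogous fact from Benoist \cite{BenoistK-criterion} or Kobayashi \cite{BKobayashi-criterion}, or reduce to the linear case. For the reduction, one applies irreducible representations of highest weights $\varpi_j$. In each such representation the top singular value is controlled by $\varpi_j(\kappa(\cdot))$, and the fundamental weights determine $\kappa$.

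\textbf{Step 3.} Combining Steps 1 and 2 gives $\|\kappa(\rho(\g))-\kappa(\eta(\g))\|\le C_Q$ for all $\g\in S_Q$. By hypothesis, with $K=C_Q$, the set $S_Q$ is finite. Hence the action is proper.

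The hypothesis that $\rho\times\eta$ has discrete image and finite kernel is consistent with this conclusion, since finiteness of $S_Q$ already forces it. We use it only to phrase the result as a proper action of the group $\G$ (equivalently, of the discrete group $\rho\times\eta(\G)$), rather than as a statement about properly discontinuous actions of the image. The only nontrivial step is the Cartan projection estimate in Step 2 for a general $\sf G$. I expect the main (mild) obstacle to be stating it cleanly via the fundamental representations rather than only for $\PSL(d,\C)$.
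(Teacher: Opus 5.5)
Your proof is correct. The paper gives no proof of this statement. It quotes it from Benoist \cite{BenoistK-criterion} and Kobayashi \cite{BKobayashi-criterion}; it is the ``if'' half of their criterion, specialized to $\mathsf G\times\mathsf G/\mathrm{Diag}(\mathsf G)$.

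Your argument is exactly the standard proof of that half, and it makes the statement self-contained. The two ingredients are these. First, $\g\in S_Q$ forces $\rho(\g)\in Q\,\eta(\g)\,Q^{-1}$. Second, the Cartan projection has bounded distortion, $\|\kappa(g_1gg_2)-\kappa(g)\|\le C_Q$ for $g_1,g_2$ in a compact set.

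For general $\mathsf G$, the cleanest justification of Step 2 is the one you indicate. Use the subadditivity $\varpi_j(\kappa(gh))\le\varpi_j(\kappa(g))+\varpi_j(\kappa(h))$ for each fundamental weight $\varpi_j$; the paper itself uses the case $j=1$ later, citing \cite[Cor. 6.34]{BQ-book}. Apply it to $g_1gg_2$ and to $g=g_1^{-1}(g_1gg_2)g_2^{-1}$. This bounds $|\varpi_j(\kappa(g_1gg_2))-\varpi_j(\kappa(g))|$ uniformly for $g_1,g_2\in Q\cup Q^{-1}$. Since the $\varpi_j$ form a basis of $\mathfrak a^*$, this gives the norm bound.

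Your remark that the finiteness hypothesis already forces finite kernel and discrete image is also correct. The extra hypothesis is therefore only needed to phrase the conclusion as a proper action of the subgroup $\rho\times\eta(\G)$.
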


Their properness criterion is most commonly used in the following form.

\begin{corollary}
\label{crit} 
Let $\rho,\eta:\G\to\sf G$ be two representations of a finitely generated group and assume that $\rho\times\eta$
has discrete image and finite kernel.  If there exists $b>1$ and $c\geq0$ such that 
$$\varpi_1(\kappa(\rho\g))-b\varpi_1(\kappa(\eta\g))\geq -c,$$ 
for every $\g\in\G$, then $\rho\times\eta(\G)$ acts properly on the group manifold $\sf G\times\sf G/\rm{Diag}(\sf G)$.
\end{corollary}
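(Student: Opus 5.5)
We will deduce Corollary \ref{crit} directly from Theorem \ref{BK}: it suffices to show that for every $K>0$ the set
$$\big\{\g\in\G:\|\kappa(\rho(\g))-\kappa(\eta(\g))\|\leq K\big\}$$
is finite.

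The first step is to pass from the norm to the linear functional $\varpi_1$. Since $\varpi_1\in\frak a^*$ is linear, there is a constant $C_1>0$ with $|\varpi_1(v)|\le C_1\|v\|$ for all $v\in\frak a$. So if $\|\kappa(\rho\g)-\kappa(\eta\g)\|\le K$, then
$$\varpi_1(\kappa(\rho\g))\le \varpi_1(\kappa(\eta\g))+C_1K.$$
Combining this with the hypothesis $\varpi_1(\kappa(\rho\g))\ge b\,\varpi_1(\kappa(\eta\g))-c$ gives
$$(b-1)\,\varpi_1(\kappa(\eta\g))\le c+C_1K.$$
Since $b>1$, this bounds $\varpi_1(\kappa(\eta\g))$ above by $(c+C_1K)/(b-1)$. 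Feeding this back into the first inequality also bounds $\varpi_1(\kappa(\rho\g))$ above.

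The second step converts these bounds into finiteness. On the closed Weyl chamber $\frak a^+$, the first fundamental weight $\varpi_1$ controls the norm: $\varpi_1(v)\ge c'\|v\|$ for $v\in\frak a^+$. In the $\PSL(d,\C)$ case this is elementary, since $\log\sigma_1\ge |\log\sigma_i|/(d-1)$ when $\sum_i\log\sigma_i=0$. In general it holds because $\varpi_1$ is a positive combination of simple roots and is positive on $\frak a^+\setminus\{0\}$; I will invoke this standard fact. Hence $\|\kappa(\rho\g)\|$ and $\|\kappa(\eta\g)\|$ are both bounded on our set. Since $\kappa$ is a proper map, the pairs $(\rho(\g),\eta(\g))$ then lie in a compact subset of $\sf G\times\sf G$. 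Because $\rho\times\eta$ has discrete image, only finitely many elements of that image lie in the compact set, and because its kernel is finite, only finitely many $\g$ map to each such element. So the set is finite and Theorem \ref{BK} applies.

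The main point requiring care is the comparison $\varpi_1\ge c'\|\cdot\|$ on $\frak a^+$, which is what lets one-sided bounds on $\varpi_1\circ\kappa$ bound the whole Cartan projection. I expect this to be standard, and the remaining steps are routine.
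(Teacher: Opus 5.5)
Your proof is correct and follows the paper's route. Both arguments reduce to Theorem \ref{BK} by showing that the conditions $\varpi_1(x)-b\,\varpi_1(y)\ge -c$ and $\|x-y\|\le K$ cut out a bounded subset of $\mathfrak{a}^+\times\mathfrak{a}^+$, and then conclude finiteness from the properness of $\kappa$ and of $\rho\times\eta$. Your inequalities spell out the compactness that the paper only asserts, including the positivity of $\varpi_1$ on $\mathfrak{a}^+\setminus\{0\}$, which the paper uses implicitly.
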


\begin{proof} We consider the functional $\varphi_\mu:\a\times\a\to\R$ defined by $\varphi_\mu(x,y)=\varpi_1(x)-b\varpi_1(y)$ and an arbitrary $K>0.$ 
Since $\mu>1$ and $c\geq0,$ the set $\{\varphi_\mu\geq -c\}\cap \rm{Diag}(\a^+)$ is compact and thus also is its intersection with the tubular neighborhood 
$$\big\{(x,y)\in\a^+\times\a^+:\|x-y\|\leq K\big\}.$$ 
Since $\rho\times\eta$ is a proper map (by assumption) and $\cartan$ is also proper, 
we conclude that the set $\{\g\in\G:\|\kappa(\rho(\g))-\kappa(\eta(\g))\|\leq K\}$ is finite and thus Benoist-Kobayashi's criterion applies and we may
conclude that  $\rho\times\eta(\G)$ acts properly.
\end{proof}

\subsection{Ledrappier potentials for Anosov representations}
\label{Anosov2}
We now develop some more structure theory for Anosov representations, see Bridgeman-Canary-Labourie-Sambarino \cite{pressure}.
Given a totally Anosov representation $\rho:\G\to\sf G$ there exists a compact metric space $\sf U_\rho\G$ equipped with a flow 
$\phi^\rho=(\phi^\rho_t:\sf U_\rho\G\to\sf U_\rho\G)_{t\in\R}$ whose periodic orbits, if $\G$ is torsion-free,  are in one-to-one correspondence 
with conjugacy classes of  infinite order elements of $\G$ such that the period of $[\g]$ is $\varpi_1(\mu(\rho\g))$. If $\G$ has torsion 
the correspondence is finite-to-one (see Blayac \cite{blayac} and Carvajales \cite[Appendix]{carvajales} for more information).

If $\eta:\G\to\sf G$ is also totally Anosov, there exists a H\"older-continuous function $f_{\rho\eta}:\sf U_\rho\G\to\R_+$ 
such that for every conjugacy class $[\g]$ the integral of $f_{\rho\eta}$ over the associated periodic orbit is 
$$\int_{[\g]}f_{\rho\eta}=\varpi_1(\mu(\eta\g)).$$
The function $f_{\rho\eta}$ is called \emph{the Ledrappier potential} of $\eta$ (seen from $\rho$) and the map $\eta\mapsto f_{\rho\eta},$ 
well defined up to Liv\v sic-cohomology, is called the \emph{thermodynamic mapping}. For $\alpha\in(0,1)$, let $
\textrm{H\"ol}^\alpha(\sf U_\rho\G,\R)$ be the space of H\"older-continuous maps with exponent less than $\alpha,$ equipped with the standard 
norm that makes it a Banach space.

If $\eta\in\frak A_{\sf\Delta}(\G,\sf G)$ is a smooth point of $\mathrm{Hom}(\pi_1(S),\sf G)$, then there exists a neighborhood $\cal U_\eta$ of $\eta$ 
and $\alpha\in(0,1)$  such that the thermodynamic mapping sends $\cal U_\eta$ into $\textrm{H\"ol}^\alpha(\sf U_\rho\G,\R)$ 
and it is an analytic map when restricted to $\cal U_\eta$ (see \cite[Proposition 6.2]{pressure}).

\subsection{Proper affine actions on the group manifold}

We are now ready to establish a 
weaker version of a recent result due to Ghosh-Kobayashi \cite{GK},  which suffices for our purposes. We include a proof for completeness.

If $U$ is a smooth open set in $\mathrm{Hom}(\Gamma,\sf G)$, then one can identify $\sf TU$ with an
open set in $\mathcal C(\Gamma,\sf G)$ and there exists a smooth map $\Psi:\sf TU\to \mathrm{Hom}(\Gamma,\sf G)$
with the property that $\frac{d}{dt}\Big|_{t=0}\Psi(\rho,tv)=v$ for all $v\in \sf T_\rho\mathrm{Hom}(\Gamma,\sf G)$ with $\rho\in U$.
We will use the notation $(\rho,v)$ to denote a tangent vector 	$v\in \sf T_\rho\mathrm{Hom}(\Gamma,\sf G)$.

\begin{proposition}\label{GxG} 
Suppose $\rho_0\in\frak A_{\sf \Delta}(\G,\sf G)$  is a smooth point of $\mathrm{Hom}(\Gamma,\sf G)$,  $v_0\in \sf T_{\rho_0}\mathrm{Hom}(\Gamma,\sf G)$
and $\co_{v_0}\in\mathcal C(\Gamma,\sf G)$ is the cocycle for $\Ad\rho$ associated to $v_0$.
If $\rho_0$ is semi-simple and  $0\notin\varpi_1\big(\Re(\M(\rho_0,\co_{v_0}))\big)$, then there exists an open neighborhood $V$ of $(\rho_0,v_0)$ in $\sf TU$ and $\delta>0$ 
so that if $(\rho, v)\in V$ and $t\in (0,\delta)$, 
the group $\rho\times \Psi(\rho,tv)$ acts properly on $\sf G\times\sf G/\rm{Diag}(\sf G).$ 
\end{proposition}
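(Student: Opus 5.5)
The plan is to reduce the statement to the Benoist--Kobayashi criterion in the form of Corollary \ref{crit}, applied to the pair $(\rho,\eta_t)$ with $\eta_t=\Psi(\rho,tv)$. Since the condition concerns $\varpi_1$, I will trade Cartan projections for Jordan projections and control the latter with the thermodynamic formalism. Replacing $v_0$ by $-v_0$ if necessary, and using that $\M$ is convex, I may assume $\varpi_1(\Re(\M(\rho_0,\co_{v_0})))\subset(-\infty,-2a]$ for some $a>0$. By Proposition \ref{continuous}, after shrinking to a neighborhood $V$ of $(\rho_0,v_0)$, the same holds with $-a$ in place of $-2a$ for all $(\rho,v)\in V$. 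By Proposition \ref{eigenvaluevariation}, applied to $\Re\,\sf m$, this gives
$$\frac{\dd}{\dd s}\Big|_{s=0}\varpi_1(\mu(\Psi(\rho,sv)(\g)))\le -a\,\varpi_1(\mu(\rho(\g)))$$
for every infinite order $\g$. Here I use that the cocycle of the curve $s\mapsto\Psi(\rho,sv)$ is $\co_v$.

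The second step passes from infinitesimal to finite $t$, uniformly in $\g$ and in $(\rho,v)$. I will use the Ledrappier potentials of Section \ref{Anosov2}. Fix the flow space $\sf U_{\rho_0}\G$. Since $\rho_0$ is a smooth point, the thermodynamic map $\eta\mapsto f_{\rho_0\eta}$ is analytic into $\textrm{H\"ol}^\alpha$ near $\rho_0$. Hence $F(\rho,v,t)=f_{\rho_0\Psi(\rho,tv)}$ is smooth in all variables near $(\rho_0,v_0,0)$, and
$$F(\rho,v,t)=f_{\rho_0\rho}+t\,\dot f_{\rho,v}+O(t^2)$$
in sup norm, with the $O(t^2)$ uniform on a compact neighborhood. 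Integrating over the periodic orbit of $[\g]$, whose period is $\varpi_1(\mu(\rho_0\g))$, gives
$$\varpi_1(\mu(\eta_t\g))\le \varpi_1(\mu(\rho\g))-at\,\varpi_1(\mu(\rho\g))+Ct^2\varpi_1(\mu(\rho_0\g)).$$
Since $\varpi_1\circ\mu\circ\rho$ and $\varpi_1\circ\mu\circ\rho_0$ are uniformly comparable near $\rho_0$ (again by bounds on the potentials), choosing $\delta$ small gives $\varpi_1(\mu(\eta_t\g))\le(1-at/2)\,\varpi_1(\mu(\rho\g))$ for $t\in(0,\delta)$. I must check that the derivative of the integral equals the integral of the derivative, which follows from analyticity of the potentials. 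I also need Livšic ambiguity not to matter, which holds because only periodic integrals are used.

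The third step passes from Jordan to Cartan projections. For Anosov representations, the Jordan projection and the Cartan projection along word length differ by an asymptotically vanishing relative amount. Concretely, I will use the Abels--Margulis--Soifer/Guichard--Wienhard finite set $A$ from the proof of Proposition \ref{notzero}, together with Lemma \ref{eigandsing} and its $\sf G$-analogue, so that $\varpi_1(\kappa(\cdot))$ and $\varpi_1(\mu(\cdot))$ differ by at most a bounded constant on $\rho(\alpha\g)$ and $\eta_t(\alpha\g)$. The finite set $A$ and the constants can be taken uniform near $\rho_0$, and semisimplicity of $\rho_0$ is used here. Since $\varpi_1(\kappa(\rho\g))$ and $\varpi_1(\kappa(\rho(\alpha\g)))$ differ by at most a constant, I obtain $\varpi_1(\kappa(\rho\g))-b\,\varpi_1(\kappa(\eta_t\g))\ge -c$ with $b=(1-at/2)^{-1}>1$. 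Finally, $\rho\times\eta_t$ has discrete image and finite kernel because $\rho$ is Anosov, so Corollary \ref{crit} applies.

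I expect the main obstacle to be uniformity. The constants in the second step must not depend on $\g$, which is why I normalize by periods and use sup-norm control of the potentials. The Jordan-to-Cartan comparison in the third step must hold simultaneously for all $(\rho,v)\in V$ and all $t<\delta$, which requires that the transversality constant $C$ in \eqref{alsocrucial} be locally uniform for nearby totally Anosov representations. I would first try to justify this from continuity of limit maps in the representation.
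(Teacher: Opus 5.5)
Your proposal is essentially the paper's proof. Ledrappier potentials turn the bound on $\varpi_1(\Re(\M))$ into $\varpi_1(\mu(\rho\g))\ge b_t\,\varpi_1(\mu(\Psi(\rho,tv)\g))$ with $b_t>1$; a finite set $A$ making $\alpha\g$ uniformly proximal converts Jordan projections to Cartan projections up to bounded error; subadditivity of $\varpi_1\circ\kappa$ removes $\alpha$; and Corollary \ref{crit} concludes. Your choice to fix the base flow space $\sf U_{\rho_0}\G$ and then compare $\varpi_1(\mu(\rho\g))$ with $\varpi_1(\mu(\rho_0\g))$ is a harmless and slightly cleaner variant of the paper's use of $\sf U_\rho\G$.

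The one step to correct is the sign reduction. Replacing $v_0$ by $-v_0$ changes the conclusion, which concerns $\Psi(\rho,tv)$ with $v$ near $v_0$ and $t>0$. If $\varpi_1(\Re(\M(\rho_0,\co_{v_0})))$ is positive, you should instead obtain $\varpi_1(\mu(\Psi(\rho,tv)\g))\ge(1+at/2)\,\varpi_1(\mu(\rho\g))$ and apply Corollary \ref{crit} to the swapped pair. This works because $x\mapsto x^{-1}$ conjugates the action of $\rho\times\eta$ on $\sf G$ to that of $\eta\times\rho$. The paper silently assumes the negative sign here.
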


\begin{proof} 
Since $\frak A_{\sf \Delta}(\G,\sf G)$ is open, we may assume that $\rho$ is totally Anosov for all $(\rho,v)\in V$.
We may also assume that the set 
$$V_0=\{(\rho,v)\in V\text{ for some }v\in \sf T_\rho\mathrm{Hom}(\Gamma,\sf G)\}$$
is a smooth open set in $\mathrm{Hom}(\Gamma,\sf G)$.
We then consider the Ledrappier potential $f_t(\rho,v):=f_{\rho \Psi(\rho,tv)}:\sf U_\rho\G\to\R$ from \S\ref{Anosov2},
with respect to the base representation $\rho.$ 
Since the map $(\rho,\eta)\mapsto f_{\rho\eta}$ is analytic  on an open neighborhood of $(\rho,\rho)$
(which may assume contains $V_0\times \Psi(V)$),
we may find $K>0$ and $\delta>0$ such that, after possibly further shrinking $V$
one has 
$$\|f_t(\rho,v)-f_0-t\frac{d}{dt}\Big|_{t=0}f_{\rho\Psi(\rho,tv)}\|_\infty\leq K t^2$$
for every $(\rho,v)\in V$ and $t\in [0,\delta]$. 
By integrating the Ledrappier potential over periodic orbits
of the geodesic flow,
one observes  that
$$\big|\varpi_1(\mu(\Psi(\rho,tv)(\g)))-\varpi_1(\mu(\rho(\g)))-t\dd\varpi_1^\g(v)\big|\leq Kt^2\varpi_1(\mu(\rho(\g)))$$  
for every $(\rho,v)\in V$,  $t\in [0,\delta]$ and infinite order element $\g\in\G$. 
Since $\varpi_1\big(\Re(\M(\rho,\co_v))$ varies continuously, we may assume
that there exists $c>0$ so that $\varpi_1\big(\Re(\M(\rho,\co_v))\big)\leq-c<0$ for all $(\rho,v)\in V$, so 
$$(1-ct+K t^2)\varpi_1(\mu(\rho(\g)))\geq\varpi_1(\mu(\Psi(\rho,tv)(\g))).$$
for every $(\rho,v)\in V$,  $t\in [0,\delta]$ and infinite order element $\g\in\G$. 
If we assume that $\delta\le\frac{c}{2K}$, then
$$\varpi_1(\mu(\rho(\g)))\geq b_t\varpi_1(\mu(\Psi(\rho,tv)(\g)))$$ 
for all $t\in (0,\delta]$ and $\g\in\G$
with $b_t=\frac{1}{1-ct+K t^2}> 1.$

Benoist \cite[\S 4.5]{benoist-asymptotic} showed there exists a finite set $A\subset\Gamma$ so that for all $\gamma\in\Gamma$,
there exists $\alpha\in A$ so that $\rho_0(\alpha\gamma)$ is $(\Delta,2\epsilon)$-proximal. 
(Recall that $g\subset \sf G$ is $(\Delta,\delta)$-proximal if it is loxodromic, if $g^-$ and $g^-$ are its attracting and repelling flags, then $d(g^+,g^-)>\delta$, 
$g(B_\delta(g^-))\subset b_\delta(g^+)$ and $g$ is $\delta$-Lipschitz on $b_\delta(g^+)$ where $B_\delta(g^-)$ is the complement of the open
neighborhood of $g^-$ of radius $\delta$ and $b_\delta(g^+)$ is the closed neighborhood of $g^+$ of radius $\delta$.)
After possibly shrinking $V$ and $\delta$, one may assume that if $\g\in\G$, then
there exists $\alpha\in A$ so that if $(\rho,v)\in V$ and $t\in [0,\delta)$, then
$\Psi(\rho,tv)(\alpha\gamma)$ is $(\Delta,\epsilon)$-proximal. Benoist \cite[\S 4.5]{benoist-asymptotic}
also showed that  there exists a compact subset $N$ of $\frak a$, depending only on $\epsilon$, so that
$g\in \sf G$ is $(\Delta,\epsilon)$-proximal, then 
$\mu(g)-\kappa(g)\in N$.
Therefore, there exists $E>0$ so that if $g\in \sf G$ is $(\Delta,\epsilon)$-proximal, then 
$$\Big|\varpi_1\big(\mu(g)\big)-\varpi_1\big(\kappa(g)\big)\Big|<E.$$
By \cite[Cor. 6.34]{BQ-book}, if $g,h\in\sf G$, then
$$\varpi_1\big(\kappa(gh))\big)\le\varpi_1\big(\kappa(g)\big)+\varpi_1(\kappa(h)).$$
Let $S=\max\{\omega_1(\rho_t(\alpha))\ :\ \alpha\in A\mbox{ or } \alpha^{-1}\in A,\ t\in [0,\delta]\}$. 
If $(\rho,V)\in U$, $t\in[0,\delta]$
and $\gamma\in \Gamma$, then there exists $\alpha\in A$ so that $\rho(\alpha\gamma)$ and $\Psi(\rho,tv)(\alpha\gamma)$ are both
$(\Delta,\epsilon)$-proximal, so
\begin{eqnarray*}
\varpi_1(\kappa(\rho(\g))) & \ge & \varpi_1(\kappa(\rho(\alpha\g)))-S\\ 
&\ge &\varpi_1(\mu(\rho(\alpha\g)))-(S+E)\\
&\geq & b_t\varpi_1(\mu(\Psi(\rho,tv)(\alpha\g)))-(S+E)\\
&\geq & b_t\Big(\varpi_1(\kappa(\Psi(\rho,tv)(\g)))-(S+E)\Big)-(S+E)\\
&\geq &b_t\varpi_1(\kappa(\Psi(\rho,tv)(\g)))-\mu_t(S+E)-(S+E).
\end{eqnarray*}
Since $b_t>1$ if $t\in (0,\delta)$, 
Corollary \ref{crit} implies that $\rho\times\Psi(\rho,tv)\,(\G)$ acts properly on $\sf G\times\sf G/\rm{Diag}(\sf G)$  for all
$(\rho,v)\in V$ and  $t\in(0,\delta).$
\end{proof}

Let ${\P}(S,\sf G)\subset\mathrm{Hom}(\pi_1(S),\sf G)$ be the set of totally Anosov representations $\rho:\pi_1(S)\to\mathsf G$ so
that there exists a cocycle $\co$ for $\Ad\rho$ so that $0$ does not lie in $\varpi_1\big(\Re(\M(\eta,\co))\big).$
Let  ${\P}_{sm}(S,\sf G)$ denote the set of semisimple representations $\rho\in{\P}(S,\sf G)$ which are smooth points of $\mathrm{Hom}(\pi_1(S),\sf G)$.

We first observe that ${\P}_{sm}(S,\sf G)$ is non-empty.

\begin{lemma} 
\label{smoothness}
Let $\sf G$ be a complex simple Lie group and $\tilde\tau:\SL(2,\C)\to \sf G$ be the lift of a principal embedding $\tau:\PSL(2,\C)\to \mathrm{Inn}(\frak g)$.
If $\rho:\pi_1(S)\to \SL(2,\C)$ is a lift of a quasifuchsian representation which is not fuchsian, then $\tilde\tau\circ \rho$ is a smooth point of $\Hom(\pi_1(S),\sf G).$
In particular, ${\P}_{sm}(S,\sf G)$ is non-empty.
\end{lemma}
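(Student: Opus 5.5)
The smoothness criterion I plan to use is Goldman's \cite{goldman-symplectic}: for a closed surface group, $\eta\in\Hom(\pi_1(S),\sf G)$ is a smooth point as soon as the centralizer $Z(\eta)$ of $\eta(\pi_1(S))$ in $\sf G$ is finite (more precisely, as soon as $H^0(\pi_1(S),\frak g_{\Ad\eta})=\frak g^{\Ad\eta(\pi_1(S))}=0$). Indeed, by Poincar\'e duality $H^2(\pi_1(S),\frak g_{\Ad\eta})$ is dual to $H^0$, so it vanishes too. Then the map $\sf G^{2g}\to\sf G$, $(a_i,b_i)\mapsto\prod[a_i,b_i]$, is a submersion at $\eta$, and $\Hom$ is a smooth manifold of dimension $(2g-1)\dim\sf G$ near $\eta$. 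So the proof reduces to showing that $\frak g$ has no nonzero vectors fixed by $\Ad(\tilde\tau\circ\rho)(\pi_1(S))$.

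I will do this by Zariski density. Since $\rho$ is quasifuchsian but not fuchsian, its image is discrete, non-elementary, and not conjugate into $\SL(2,\R)$ (or any proper real form, up to finite index). Hence the Zariski closure of $\rho(\pi_1(S))$ in $\SL(2,\C)$, viewed as a real algebraic group, is all of $\SL(2,\C)$. Indeed, a proper real algebraic subgroup containing a non-elementary Kleinian group must be conjugate to (a finite extension of) $\SL(2,\R)$ or $\mathrm{SU}(2)$. The first would force the limit set to be a round circle, i.e.\ $\rho$ fuchsian. The second is compact, hence excluded. Consequently the fixed vectors of $\Ad(\tilde\tau\circ\rho)(\pi_1(S))$ in $\frak g$ coincide with the fixed vectors of $\Ad(\tilde\tau(\SL(2,\C)))$, which form the centralizer of the principal $\sl(2,\C)$ (the image of $\dd\tau$) in $\frak g$. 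The fact that this is a real Zariski closure rather than a complex one does no harm: the fixed-point condition $\Ad(\tilde\tau(g))X=X$ is real algebraic in $g$.

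The remaining step, which I expect to be the main point, is Kostant's theorem \cite{Kostant}: under the adjoint action of the principal $\sl(2,\C)$, the algebra $\frak g$ decomposes as $\bigoplus_{j=1}^{r}V_{2m_j}$, where $r$ is the rank, $m_j\ge1$ are the exponents, and $V_{2m}$ is the irreducible representation of dimension $2m+1$. Since every exponent is at least $1$, no trivial summand occurs, so the centralizer is $0$. (Equivalently, the principal nilpotent $e$ is regular, so its centralizer is abelian of dimension $r$ and consists of highest weight vectors of positive weight for $h$.) This gives $H^0=0$ and hence smoothness.

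For the ``in particular'' statement, the proof of Theorem~\ref{complexgroups1} already shows that for a non-fuchsian $\rho$ with both domains moderately bent (which exist, e.g.\ in $V(S)$ from Corollary~\ref{neighborhoods}), the cocycle $\dd\tilde\tau\circ\co$ satisfies $0\notin\varpi_1(\Re\M(\tilde\tau\circ\rho,\dd\tilde\tau\circ\co))$. Moreover, $\tilde\tau\circ\rho$ is totally Anosov, since composing a convex cocompact representation with the principal embedding yields a Borel Anosov representation. It is also semisimple, since its Zariski closure $\tilde\tau(\SL(2,\C))$ is reductive, so the representation is completely reducible. Combined with the smoothness above, this places $\tilde\tau\circ\rho$ in ${\P}_{sm}(S,\sf G)$.
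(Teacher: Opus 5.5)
Your proof is correct and follows the paper's route. Goldman's criterion reduces smoothness to the vanishing of $\mathfrak{g}^{\Ad(\tilde\tau\circ\rho)(\pi_1(S))}$. Zariski density of $\rho(\pi_1(S))$ reduces this to the centralizer of the principal $\sl(2,\C)$, which Kostant's theory shows is zero. The ``in particular'' part is obtained from Theorem~\ref{complexgroups1} exactly as in the paper.

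The only difference is in the last Lie-theoretic step. You invoke the decomposition $\mathfrak{g}=\bigoplus_j V_{2m_j}$ with all exponents $m_j\ge 1$. The paper instead argues directly: a centralizing element lies in $\h$ because $h$ is regular, and it is killed by every simple root because $e=\sum_{\alpha\in\Delta}e_\alpha$ with all $e_\alpha\neq 0$, so it vanishes.
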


\begin{proof}
Goldman \cite[\S 1.2 \& 1.3]{goldman-symplectic} showed that  $\tilde\tau\circ \rho$ is a smooth point if the Lie algebra  of its centralizer  is zero-dimensional. 
Since lying in the centralizer is an algebraic condition, it suffices to show the Lie algebra of its Zariski closure  is zero-dimensional. 
If $\rho:\pi_1(S)\to\PSL(2,\C)$ is quasifuchsian, but not fuchsian, the Lie algebra of the  Zariski closure of $\tilde\tau\rho(\pi_1(S))$ is $\frak s = d\tau(\sl(2,\C))$.

Suppose that $x$ lies in the centralizer of $\frak s$. By Kostant (see  \cite[Lemma 5.2]{Kostant}),  $\frak s$ is an $\frak\sl(2,\C)$ triple $\{h, e, f\}$  where $h$ is in the interior of a Weyl chamber  with Cartan subalgebra $\frak h$ and  simple roots $\Delta$ such that 
$$e = \sum_{\alpha \in \Delta} e_\alpha \qquad e_\alpha \in \mathfrak g_\alpha, \ e_\alpha \neq 0.$$ 
Thus the centralizer of $h$ is $\frak h$ and therefore $x \in \frak h$.   As $x$ is also in the centralizer of $e$,
$$0 = \ad(x)(e) = \sum_{\alpha \in \Delta} \ad(x)(e_\alpha) = \sum \alpha(x) e_\alpha.$$
It follows that $\alpha(x) = 0$ for all $\alpha \in \Delta$ and as $\Delta$ is a basis for $\frak h^*$ then $x =0$.
Therefore, $\tilde\tau\circ \rho$ is a smooth point of $\Hom(\pi_1(S),\sf G).$

Theorem \ref{Main Theorem 4} implies that there exists a quasifuchsian representation $\sigma:\pi_1(S)\to\mathsf{PSL}(2,\mathbb C)$ which admit a cocycle $\co$ 
so that $0\notin\varpi_1\big(\Re(\M(\rho,\co))\big)$. Let $\rho:\pi_1(S)\to\SL(2,\C)$ be a lift of $\sigma$, then the proof of Corollary \ref{complexgroups1} implies
that $\tilde\tau\circ\rho\in {\P}(S,\sf G)$. The representation $\tilde\tau\circ\rho$ is semi-simple, since its Zariski closure is simple and we have
just shown it is a smooth point of $\Hom(\pi_1(S),\sf G)$, so $\tilde\tau\circ\rho\in {\P}_{sm}(S,\sf G)$.
\end{proof}

We now establish the following more precise version of part ii) of Corollary \ref{complexgroups}.

\begin{corollary} Suppose that $\sf G$ is a  complex simple Lie group. 
The space of representations of $\pi_1(S)$ into $\sf G\times\sf G$ such that the action on $\sf G\times\sf G/\rm{Diag}(\sf G)$ 
is proper has non-empty interior and its closure contains the set $\{\rho\times\rho:\rho\in\P_{sm}(S,\sf G)\}.$
\end{corollary}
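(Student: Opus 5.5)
The plan is to derive the corollary directly from Proposition \ref{GxG} together with Lemma \ref{smoothness}. Fix $\rho_0\in\P_{sm}(S,\sf G)$. By definition there is a cocycle $\co$ for $\Ad\rho_0$ with $0\notin\varpi_1\big(\Re(\M(\rho_0,\co))\big)$. Since $\rho_0$ is a smooth point, $\co=\co_{v_0}$ for a unique $v_0\in\sf T_{\rho_0}\Hom(\pi_1(S),\sf G)$. If the set $\varpi_1(\Re(\M))$ happens to lie in $(0,\infty)$ rather than $(-\infty,0)$, I replace $v_0$ by $-v_0$. This is allowed because $\sf m$ is linear in $x$, so $\M(\rho_0,-\co)=-\M(\rho_0,\co)$. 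After this normalization we are in the sign convention used in the proof of Proposition \ref{GxG}, namely $\varpi_1(\Re\M)\le -c<0$.

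Proposition \ref{GxG} then provides an open neighborhood $V$ of $(\rho_0,v_0)$ in $\sf TU$ and a $\delta>0$ such that $\rho\times\Psi(\rho,tv)$ acts properly for every $(\rho,v)\in V$ and every $t\in(0,\delta)$. Fix some $t_0\in(0,\delta)$. The map $(\rho,v)\mapsto \rho\times\Psi(\rho,t_0v)$, restricted to $V$, lands in the set of proper actions. To conclude that this set has nonempty interior in $\Hom(\pi_1(S),\sf G\times\sf G)$, I would show that the image of $V$ contains an open set. Here I use the freedom in choosing $\Psi$. Near the smooth point $\rho_0$, take $\Psi(\rho,v)=\exp_\rho(v)$ with respect to some chart of the manifold $\Hom$, so that $\Psi(\rho,\cdot)$ is a local diffeomorphism onto a neighborhood of $\rho$. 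Then $(\rho,v)\mapsto(\rho,\Psi(\rho,t_0v))$ is a local diffeomorphism from $\sf TU$ onto $U\times U$ near $(\rho_0,v_0)$, and so its image contains an open subset of $\Hom\times\Hom=\Hom(\pi_1(S),\sf G\times\sf G)$.

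For the closure statement, note that $(\rho_0,v_0)\in V$. Hence $\rho_0\times\Psi(\rho_0,tv_0)$ is proper for all $t\in(0,\delta)$, and it converges to $\rho_0\times\rho_0$ as $t\to0$. Nonemptiness of $\P_{sm}(S,\sf G)$ is supplied by Lemma \ref{smoothness}, which in turn rests on Theorem \ref{Main Theorem 3}.

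The main obstacle I anticipate is the openness step. Proposition \ref{GxG} is stated in terms of pairs $(\rho,v)$, and converting it into an open set of representations requires that $\Psi$ be a submersion in the $v$ variable. I would handle this by fixing $\Psi$ concretely via a local chart at the smooth point $\rho_0$, and by checking that Proposition \ref{GxG} holds for that particular choice. Its proof uses only that $\Psi$ is smooth with $\frac{d}{dt}\big|_{t=0}\Psi(\rho,tv)=v$, so any such choice should work. A secondary point to verify is the sign normalization, namely that the hypothesis in Proposition \ref{GxG} is really used in the form $\varpi_1(\Re\M)<0$.
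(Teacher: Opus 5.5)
Your proposal is correct and follows the paper's proof. Apply Proposition \ref{GxG} at each $\rho_0\in\P_{sm}(S,\sf G)$, take the set of $\sigma\times\Psi(\sigma,tv)$ with $(\sigma,v)\in V$ and $t\in(0,\delta)$, and set $(\sigma,v)=(\rho_0,v_0)$ with $t\to 0$ to reach $\rho_0\times\rho_0$; your sign normalization and your check that $(\sigma,v)\mapsto(\sigma,\Psi(\sigma,t_0v))$ is a local diffeomorphism make explicit the sign convention and openness that the paper asserts without comment (and the latter holds for any admissible $\Psi$ once $t_0$ is small, since $d_v\Psi(\sigma,0)$ is the identity).
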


\begin{proof} 
For each $\rho_0\in\P_{sm}(S,\sf G)$, there exists a cocycle $\co_0$ for $\Ad\rho_0$ so that 
$0\notin\varpi_1\big(\Re(\M(\rho_0,\co_0)\big)$.
Proposition \ref{GxG} produces $\delta_{\rho_0}>0$ and a neighborhood $V_{\rho_0}$ of $(\rho_0,\co_0)$ in $\mathcal C(\Gamma,\sf G)$
so that if $(\sigma,v)\in V_{\rho_0}$ and $t\in (0,\delta_{\rho_0}]$, then
$\sigma\times\Psi(\sigma,tv)$ yields a proper action of $\pi_1(S)$ on $\sf G\times\sf G/\rm{Diag}(\sf G)$.
The set 
$$\{\sigma\times\Psi(\sigma,tv):  (\sigma,v)\in V_\rho\text{ and }t\in(0,\delta_\rho) \ \text{for some}\ \ \rho\in \P_{sm}(S,\sf G)\}$$
 is a non-empty open set that verifies the desired conditions.
\end{proof}

\section{Angle  and bending bounds}

In order to show that a Jordan domain is moderately bent, it suffices to have a bound on the angle between
geodesics in the intrinsic metric on the boundary of the convex hull and geodesics in $\mathbb H^3$ joining points on the boundary.
Our final goal is to prove Theorem \ref{Main Theorem 5} which shows that if $||\beta_\mu||_L<r(L)$, then $\Omega_\mu$ is moderately bent.

Given  a measured lamination $\mu$ on $\Hp$ one may  construct a locally convex map $f_\mu:\Hp \rightarrow \Hs$ with bending lamination $\mu$.
The construction may be formulated in the same language as the construction of a bending deformation.  Specifically, for a locally finite 
lamination $\mu$, we consider the cocycle $Z_\mu: \Hp\times\Hp \rightarrow \psl$. 
If $x, y \in \Hp$, consider the geodesic arc $[x,y]$ and  let $m_1,\ldots, m_n$ be the geodesics in the support of 
$ \mu$ intersecting $ [ x, y]$ with  atomic measures $a_1,\ldots a_m \in \R_+$. Then we define the {\em bending cocycle}
$$Z_\mu(x,y) = R(m_1,ia_1)R(m_2,ia_2)\ldots R(m_{n-1},ia_{m-1})R(m_n,ia_n).$$
If $x \in m_1$ (or $y \in a_m$), we replace $a_1$ by $a_1/2$ (similarly $a_m$ by $a_m/2$).
Then the map $f_\mu$ is defined by fixing a basepoint $x_0$ and defining
$$f_\mu(x) = Z_\mu(x_0,x)x.$$
This is well-defined up to post-composition by an element of $\psl$. We will denote  the image of  $f_\mu$ by $P_\mu$. 
This definition can be extended to all measured laminations by taking limits (see \cite[Chapter II.3]{EM}).  

If a locally convex pleated plane $P_\mu$ is embedded we call it a {\em convex pleated plane} and it bounds a convex region $X_\mu$ in
$\mathbb H^3$. If $f_\mu$ extends continuously to an embedding
$\bar f_\mu:\partial\mathbb H^2\to\partial\mathbb  H^3$, we let  $\Omega_\mu$ denote the Jordan domain of $\partial\mathbb H^3$ facing
the boundary $P_\mu$  of the convex region $X_\mu$.  Notice that if $\beta_\pm$ are the bending laminations of $\rho\in \QF(S)$, 
then the boundary $\partial CH(\rho)$ consists of
two convex pleated planes whose bending laminations are the lifts of $\beta_\pm$ to $\mathbb H^2$.

\subsection{$\theta$-bounded pleated planes}
We now give a concrete definition of the angle we want to bound when $P_\mu$ is a convex pleated plane.

Let $\alpha:[0,\infty)\to\Hp$ be  a unit speed geodesic ray and $\beta = f_\mu\circ \alpha$.   The map $\beta$ may not be differentiable at every point,
but for all $t$ it has a left and right derivative $\beta'_\pm(t)$. If $t>0$, we let $\gamma_t:[0,\infty)\to \mathbb H^3$ such that
$\gamma_t(0)=\alpha(0)$ and $\gamma_t(r_t)=\beta(t)$ where $r_t=d(\beta(0),\beta(t))$. We then define
$\theta^\pm_\mu(t)$ to be the angle at $\beta(t)$ between the tangent vectors $\gamma_t'(r_t)$
and $\beta_\pm'(t)$ at the point $\beta(t)$.
If $\beta(t)$ does not lie on an isolated leaf of $\mu$, then $\theta_\alpha^-(t) = \theta^+_\alpha(t)$. 
Furthermore $\theta_\mu^-$ is continuous from the left and $\theta^+_\mu$ is continuous from the right.

Given $p\ne q\in\Hp$, let $\alpha:\R\to \Hp$ be the unique unit speed geodesic ray with $\alpha(0) = p$ and $\alpha(d(p,q)) = q$. We define
$$\theta_\mu(p,q) = \theta^+_\mu(p,q) = \theta^+_\alpha(d(p,q)) \qquad \theta^-_\mu(p,q) =  \theta^-_\alpha(d(p,q)).$$
If $q$ is not on an isolated leaf of $\mu$, then $\theta^-_\mu(p,q) = \theta^+_\mu(p,q)= \theta_\mu(p,q)$. 

Given $\theta \in [0,\pi)$ we define $\mu$ to be {\em $\theta$-bounded} if $\theta_\mu(p,q) \le \theta$ for all $p\ne q\in\Hp$. Notice that  continuity
properties of $\theta_\mu^{\pm}$ imply that $\mu$ is $\theta$-bounded  if and only if 
$\theta^{-}_\mu(p,q) \le \theta$ for all $p\ne q\in\Hp$.
 
We first observe that if $\mu$ is $\theta$-bounded for some $\theta<\frac{\pi}{2}$, then $f_\mu$ is a bilipschitz embedding.

\begin{lemma}
If $\mu\in \mathcal{ML}(\Hp)$ is $\theta$-bounded for some $\theta <\frac{\pi}{2}$, then $f_\mu:\mathbb H^2 \rightarrow \mathbb H^3$ is a $(\sec\theta)$-bilipschitz
embedding. In particular $f_\mu$ extends continuously to an embedding
$\bar f_\mu:\partial\mathbb H^2\to\partial \mathbb H^3$.
\end{lemma}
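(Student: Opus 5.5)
The plan is to compare the distance function along the image of a geodesic with the intrinsic (hyperbolic) arc length. Fix $p\ne q$ in $\Hp$, let $\alpha$ be the unit speed geodesic from $p$ with $\alpha(d(p,q))=q$, and set $\beta=f_\mu\circ\alpha$. Since $f_\mu$ is an isometry on each flat piece and bending only happens along leaves, $\beta$ is $1$-Lipschitz and parametrized by arc length, with one-sided unit derivatives $\beta'_\pm(t)$. Hence
$$d(f_\mu(p),f_\mu(q))\le d(p,q),$$
which is the easy direction of the bilipschitz bound.

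For the reverse inequality, consider $r(t)=d(\beta(0),\beta(t))$ for $t>0$. The first variation formula for distance in $\Hs$ gives the one-sided derivatives
$$r'_\pm(t)=\cos\theta^\pm_\mu(p,\alpha(t)),$$
since $\theta^\pm$ is exactly the angle between $\beta'_\pm(t)$ and the outward radial direction $\gamma_t'(r_t)$. By $\theta$-boundedness, $r'_\pm(t)\ge\cos\theta>0$ for all $t>0$. One has to check that $r$ is absolutely continuous, which holds because it is Lipschitz as a composition of a $1$-Lipschitz curve with a distance function. Then a function whose right derivative is everywhere at least $\cos\theta$ satisfies $r(t)\ge t\cos\theta$; near $t=0$ one uses $r(t)/t\to 1$ or simply integrates from $\epsilon$ and lets $\epsilon\to0$. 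This yields
$$d(f_\mu(p),f_\mu(q))\ge \cos\theta\, d(p,q),$$
so $f_\mu$ is $(\sec\theta)$-bilipschitz, and in particular an embedding.

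The step I expect to need the most care is the identification of the one-sided derivatives of $r$ with $\cos\theta^\pm$ when $\beta$ is not differentiable, and the passage from one-sided derivative bounds to the integrated bound when $\mu$ has non-atomic parts. For a finite-leaved $\mu$ this is immediate, since $\beta$ is piecewise geodesic and $r$ is piecewise smooth. In general I would either argue directly via Dini derivatives, using the monotonicity lemma: a continuous function with lower right Dini derivative at least $c$ everywhere satisfies $r(t)-r(s)\ge c(t-s)$. Alternatively, I would approximate $\mu$ by finite-leaved laminations, but this requires that $\theta$-boundedness pass to the approximants, which is not clear. So I would take the Dini derivative route.

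Finally, for the boundary extension, a bilipschitz embedding $\Hp\to\Hs$ is a quasi-isometric embedding. By the Morse lemma it carries geodesic rays to quasi-geodesics at bounded Hausdorff distance from geodesic rays, and so it extends continuously and injectively to $\bar f_\mu:\partial\Hp\to\partial\Hs$. This is the standard fact that quasi-isometric embeddings of hyperbolic spaces induce topological embeddings of Gromov boundaries, and I would simply cite it.
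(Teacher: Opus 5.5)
Your proposal is correct. For finite-leaved $\mu$ it coincides with the paper's argument: $s'(t)=\cos\theta(t)\ge\cos\theta$ on each geodesic piece, then integrate.

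The two routes differ in the general case. The paper does not differentiate $f_\mu\circ\alpha$ there. Instead it replaces the image curve by inscribed piecewise geodesic polygons $\gamma_n$ with vertices $f_\mu(\alpha(\mathcal P_n))$. It asserts that these are $\theta_n$-bounded with $\theta_n\to\theta$, and passes to the limit. Note that it approximates the curve, not the lamination, so it also avoids the problem you flagged about $\theta$-boundedness of finite-leaved approximants.

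You instead handle every $\mu$ at once. The function $r$ is $1$-Lipschitz, and the first variation formula identifies its one-sided derivatives with $\cos\theta^\pm_\mu$. Then either $r(t)=\int_0^t r'$ (absolute continuity) or the Dini monotonicity lemma gives $r(t)\ge t\cos\theta$.

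What your route buys is a uniform argument with no limiting step. In particular you do not need to verify why the polygons are $\theta_n$-bounded with $\theta_n\to\theta$, which the paper leaves unchecked. Its only input beyond calculus is the existence of the one-sided derivatives $\beta'_\pm(t)$ for arbitrary $\mu$. The paper already takes this for granted when it defines $\theta^\pm_\mu$, so you assume nothing new. The paper's route, in exchange, keeps all the calculus on piecewise smooth curves.

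One small detail you should state: the first variation formula needs $\beta(t)\ne\beta(0)$. This is either implicit in the hypothesis (otherwise $\theta_\mu$ is undefined), or it follows by running your argument up to the first time $t^*$ with $\beta(t^*)=\beta(0)$, which gives the contradiction $0=r(t^*)\ge t^*\cos\theta>0$.

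For the boundary extension, the paper gives no argument. Your appeal to the standard fact is the right justification: quasi-isometric embeddings of Gromov hyperbolic spaces induce topological embeddings of the boundaries.
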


\begin{proof}
By definition $f_\mu$ is $1$-Lipschitz so we only need to establish the lower bound.  We assume first that $\mu$ has locally finite support. 
Let $x,y\in \mathbb H^2$  and $\alpha:[0,T]\rightarrow \mathbb H^2$ be a geodesic parametrized by arc length with 
$x = \alpha(0)$ and $y=\alpha(T)$. We let $s(t) = d(f_\mu(x), f_\mu(\alpha(t))$ and let $t_i$ be values of $t$ such that $f_\mu(\alpha(t))$  
is on a  bending line. If $f_\mu\circ \alpha$ does not intersect  $\mu$ transversely, then it is a geodesic and $f_\mu$ is an isometry on $\alpha$. 
Otherwise, there is a finite collection $\{t_i\}\subset [0,T]$ so that $f_\mu\circ \alpha$ is geodesic on $[t_i,t_i+1]$.
If $t\neq t_i$  we define $\theta(t) = \theta_\mu(x, \alpha(t))$. Then a simple calculation gives that
$$s'(t) = \cos(\theta(t)) \geq \cos(\theta).$$
It follows that $s(t) \geq \cos(\theta)t$, so
$$d(f_\mu(x), f_\mu(y)) = s(T) \geq \cos(\theta)T = \cos(\theta)d(x,y).$$

For a general $\theta$-bounded lamination $\mu$, we consider the path $\gamma(t) = f_\mu(\alpha(t))$. 
Then $\gamma$ is a $\theta$-bounded curve. Let $\mathcal P_n$ be an evenly spaced partition of 
$[0,T]$ with $n$ vertices and let $\gamma_n:[0,T]\rightarrow \mathbb H^3$ 
be the piecewise geodesic path with vertices $f_\mu(\alpha(\mathcal P_n))$. 
Then, $\gamma_n$ is a $\theta_n$ bounded curve and $\theta_n \rightarrow \theta$, so by taking limits we see that
$$d(f_\mu(x), f_\mu(y)) \geq \cos(\theta)d(x,y).$$
\end{proof}

We now prove that $\theta$-bounded Jordan domains are moderately bent when $\theta<\frac{\pi}{2}$.
 
\begin{proposition}\label{theta-mod}
If $\mu\in \mathcal{ML}(\Hp)$ is $\theta$-bounded for some $\theta <\frac{\pi}{2}$, then $\Omega_\mu$ is moderately bent.
\end{proposition}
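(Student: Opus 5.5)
Fix a bending pair $(x,y)$ for $\Omega_\mu$, so there is a round disk $D\subset\Omega_\mu$ with $x,y\in\partial D$. The plan is to produce the transverse circle $L$ required by the definition of moderately bent by looking at the pleated plane $P_\mu$ from the bending line joining $x$ and $y$.

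Step 1: the geodesic $g$ joining $x$ and $y$ lies in $P_\mu$. The hyperbolic plane $H_D$ spanned by $\partial D$ is a support plane to the convex region $X_\mu$, so $g\subset \overline{X_\mu}$ lies in $P_\mu$ and is either a leaf of $\mu$ or lies in a flat piece. By the preceding Lemma, $f_\mu$ is a $(\sec\theta)$-bilipschitz embedding extending to $\bar f_\mu$. Hence $g=f_\mu(\tilde g)$ for the geodesic $\tilde g\subset\Hp$ with endpoints $\bar f_\mu^{-1}(x),\bar f_\mu^{-1}(y)$.

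Step 2: normalize and describe $L$. Normalize so that $x=0$, $y=\infty$, $g$ is the vertical axis, and $H_D$ is the vertical half-plane over $\R$, with $D$ the upper half-plane. A circle through $0$ and $\infty$ is a line $L_\phi=e^{i\phi}\R$, bounding the half-plane $\Pi_\phi$ over it. It suffices to find $\phi$ for which $\Lambda=\partial\Omega_\mu$ meets $L_\phi$ only at $0,\infty$ and the two arcs of $\Lambda\setminus\{0,\infty\}$ lie on opposite sides of $L_\phi$. Let $\tilde g^\pm$ be the two half-planes of $\Hp\setminus\tilde g$; the arcs of $\Lambda$ are $\bar f_\mu(\partial_\infty\tilde g^\pm)$.

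Step 3: apply the angle bound. For each $p\in\tilde g^{+}$, drop the perpendicular from $p$ to $\tilde g$ at foot $q$. This is the main obstacle: we must show that the unit vector of $[f_\mu(q),f_\mu(p)]$ at $f_\mu(q)$, which is perpendicular to $g$ after adjusting by the tangent direction, points into a fixed closed sector of opening less than $\pi$ in the normal circle to $g$, and that the opposite side gives the antipodal sector. Apply $\theta$-boundedness with base point $p$ and endpoint $q$ along the ray from $p$ through $q$ continuing across $\tilde g$. The ray from $p$ meets $g$ at $f_\mu(q)$ with forward direction $v$ making angle at most $\theta$ with the chord direction from $f_\mu(p)$. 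Continuing across into $\tilde g^-$ and applying the bound symmetrically shows that chords from opposite sides arrive making angle at most $\theta<\pi/2$ with the continued path.

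Step 4: convert the local bound into a global separation. Convexity confines $\Lambda$ to the closed lower half-plane. The bound should imply that the $+$ arc lies in an open sector $\{\arg z\in(\pi,\pi+\pi/2+\theta)\}$-type region and the $-$ arc lies in a sector disjoint from it, with the total opening strictly less than $\pi$ on each side. Then a line $L_\phi$ with $\phi$ chosen between the two sectors separates them. I expect to make this precise by first treating finite-leaved $\mu$, where $P_\mu$ near $g$ is a finite union of flat pieces and the dihedral angles along $g$ are controlled. The general case would then follow by the approximation used in the preceding Lemma, with strict inequality preserved because $\theta<\pi/2$ is fixed. Transversality of $L_\phi$ to $\Lambda$ at $0$ and $\infty$ follows because the arcs lie in sectors bounded away from $L_\phi$ near these points.
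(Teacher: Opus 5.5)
Your Step 3 uses the hypothesis correctly. It is essentially the paper's key step: with the far point as base and the point of $g$ as endpoint, $\theta$-boundedness compares the chord with the continuation of the path across $g$.

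The gap is Step 4. That is where the proposition is actually decided, and you only assert it; the sketch you give does not work.
\begin{itemize}
\item The claim in Step 3 that the other side yields the \emph{antipodal} sector fails as soon as $g$ carries positive bending. Chords from the $\tilde g^+$ side are controlled by the continuation across $g$ of the face on the $\tilde g^-$ side, and vice versa. These two reference directions make angle $\pi$ minus the bending, not $\pi$, so the two $\theta$-cones can overlap when $\theta$ is near $\frac{\pi}{2}$.
\item A sector such as $\arg z\in(\pi,\frac{3\pi}{2}+\theta)$ for one arc, together with its symmetric counterpart for the other arc, also overlaps. Two sectors of opening $\frac{\pi}{2}+\theta$ at opposite ends of a half-plane must meet.
\item The reduction to finite-leaved laminations is not available. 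Approximants of $\mu$ need not be $\theta'$-bounded for any $\theta'<\frac{\pi}{2}$, and moderate bentness is not obviously preserved under limits. The reduction is also unnecessary, since one-sided derivatives already handle general $\mu$.
\end{itemize}

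The missing idea is to combine convexity with a bound on the bending along $g$. In your normalization, $\Lambda$ lies in the closed lower half-plane. The two faces (or extreme support half-planes) along $g$ lie over rays of arguments $-\alpha$ and $-\pi+\beta$ with $\alpha,\beta\ge 0$; label them so the first is on the $\tilde g^+$ side.

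First, bound the bending. Apply $\theta$-boundedness to a short path crossing $g$ perpendicularly. This shows that the bending $\alpha+\beta$ at $g$ is at most $\theta<\frac{\pi}{2}$. Hence the continuation of the $\tilde g^-$ face across $g$ lies over the ray of argument $\beta\in[0,\frac{\pi}{2})$.

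Next, pass to boundary points. For $\xi\in\partial_\infty\tilde g^+$, let $q$ be the foot of the perpendicular from $\xi$ to $\tilde g$, and let $p\to\xi$ along that perpendicular.
\begin{itemize}
\item The continued direction at $f_\mu(q)$ is perpendicular to $g$, hence horizontal with argument $\beta$.
\item By continuity of $\bar f_\mu$, the chord from $f_\mu(q)$ to $f_\mu(p)$ converges to the geodesic ray toward $u=\bar f_\mu(\xi)$. The horizontal component of that ray points along $u/|u|$.
\item Projecting onto the horizontal plane does not increase an angle (less than $\frac{\pi}{2}$) to a horizontal vector. So your bound gives $|\arg u-\beta|\le\theta$.
\end{itemize}

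Together with $\Im u\le 0$, this forces $\arg u\in[\beta-\theta,0]\subset(-\frac{\pi}{2},0]$, i.e.\ $\Re u>0$. Symmetrically $\Re u<0$ on the other arc, so $L=i\R$ is the required circle.

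This is the paper's argument, except that the paper instead fixes the point of $g$ so that the limiting chord, rather than the path direction, is horizontal. In particular the correct sectors have opening at most $\theta$, not $\frac{\pi}{2}+\theta$.
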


\begin{proof}
Let $(x,y)$ be a bending pair for $\Omega_\mu$. We may assume that  $(x,y) =(0,\infty)$ and that the plane $Q$ lying above the real axis $\R$
is a support plane for the pleated plane $P_\mu$. We can assume that $\Omega_\mu$ contains the upper half-plane.  
Let $P_-$  and $P_+$ be the left and right half-planes for the geodesic $\overline{0\infty}$.
(Notice that if $g$ is not an atom for the measure, then $Q=P_-=P_+$.)

Suppose that $u\in \partial \Omega_\mu\setminus\{0,\infty\}$ lies to the right of $0$.
Let $\eta:\R\to \mathbb H^3$ be the unique geodesic ray so that $\eta(0)=p=(0,0,|u|)\in\overline{0\infty}$ and
$\lim_{t\to +\infty} \eta(t)=u$. Notice that $\eta(\R)$ is perpendicular to $\overline{0\infty}$. 
Let $\gamma:\R\to P_\mu$ be a geodesic  in the intrinsic metric on $P_\mu$ so that  $p=\gamma(0)$  and  $\lim_{t\to +\infty} \gamma(t)=u$. 
Let $\alpha_t:\R\to \mathbb H^3$ be a geodesic  in $\mathbb H^3$ so that  $p=\alpha_t(0)$ and $\alpha_t(d(p,\gamma(t)))=\gamma(t)$.
Since $\mu$ is $\theta$-bounded, 
the angle  $\theta_\mu(\gamma(t),\gamma(0))$ is at most $\theta$. By definition, $\theta_\mu(\gamma(t),\gamma(0))$ is the angle  between 
$-\alpha_t'(0)$  and $-\gamma_-'(0)$, or equivalently,  the angle  between $\alpha_t'(0)$  and $\gamma_-'(0)$.

Since $f_\mu$ extends continuously to an embedding 
$\bar f_\mu:\partial\mathbb H^2\to\partial \mathbb H^3$,  we see that $\alpha_t$ converges to $\eta$, so 
$\alpha_t'(0)$ converges to $\eta'(0)$. Therefore, the angle between $\eta'(0)$ and  $\gamma_-'(0)$  is at most $\theta$. 

Notice that $\gamma_-'(0)$ lies in the portion of $P_-$ which is to the right of $\overline{0\infty}$. We see, by projecting $\eta$ onto the plane that the
Euclidean line segment $R_u$ joining 0 to u makes an angle of at most $\theta$ with portion $R_-$ of $\partial P_-$ which lies to the right of $0$.
Notice that $R_-$ must be a line segment with positive slope, since otherwise the  bending at $\overline{0\infty}$ would be at least $\frac{\pi}{2}$, which would contradict  the fact that
$\mu$ is $\theta$-bounded. Since $R_u$ lies in the lower half plane and makes an angle less than $\frac{\pi}{2}$ with $R_-$ it must have negative slope.
Therefore, $\Re(u)>0$ (see Figure \ref{fig:theta}).

We may similarly show that if $u \in \partial\Omega_\mu$ lies to the left of $0$, then $\Re(u)<0$.
Therefore, if $L$ is the imaginary axis, then $L$ is a round circle in $\hat\C$ which intersects $\partial\Omega_\mu$ transversely at the points
$0$ and $\infty$ and intersects no other point of $\partial\Omega_\mu$.
\begin{figure}[htbp] 
   \centering
   \includegraphics[width=3in]{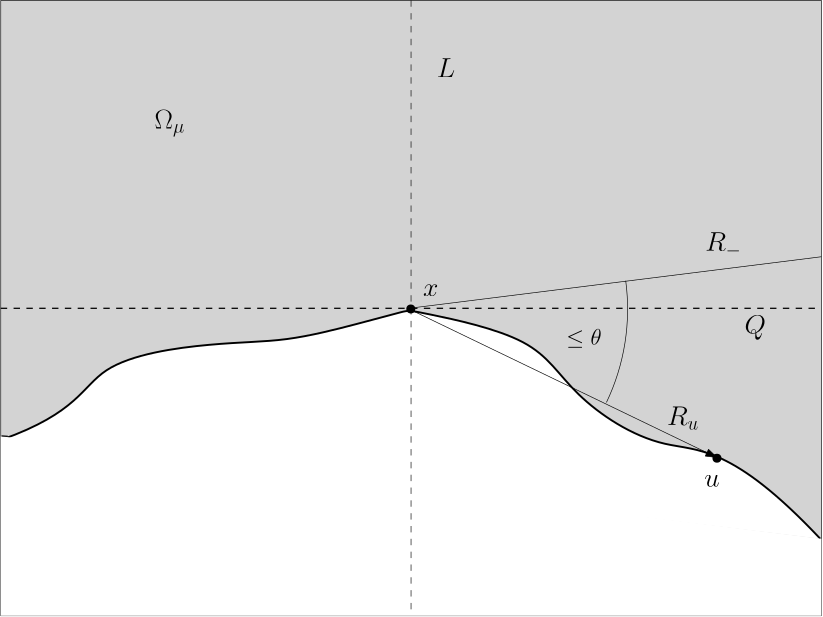} 
   \caption{Plane $L$}
   \label{fig:theta}
\end{figure}
\end{proof}

\subsection{Bending bounds}
We now give explicit bounds on $\|\mu\|_L$ such that the associated pleated plane is $\theta$-bounded and therefore the associated Jordan domain  is moderately bounded.

In earlier work \cite{BCY} the first two authors and  Yarmola produced an explicit upper bound on  $\|\mu\|_L$ which guarantees that $f_\mu$ is a bilipschitz embedding.

\begin{theorem}{\rm (Bridgeman-Canary-Yarmola \cite{BCY})}
\label{BCY}
There exists an explicit function $G:\R_+\rightarrow\R_+$ such that if $\mu$ is a uniformly bounded measured lamination on
$\mathbb H^2$ and $\|\mu\|_L < G(L)$, then $f_\mu$ is an  bilipschitz embedding. In particular,  $P_\mu$ is a convex pleated plane
and $f_\mu$ extends continuously to an embedding
$\bar f_\mu:\partial\mathbb H^2\to\partial \mathbb H^3.$
\end{theorem}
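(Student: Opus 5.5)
The plan is to follow the strategy of Bridgeman-Canary-Yarmola: reduce to finite-leaved laminations, control the distance along a pleated path from below using the roundness bound, and then pass to limits. First assume $\mu$ has locally finite support. Given $x,y\in\Hp$, let $\alpha:[0,T]\to\Hp$ be the unit speed geodesic from $x$ to $y$ and $\beta=f_\mu\circ\alpha$, a piecewise geodesic in $\Hs$. Let $\theta(t)=\theta_\mu(x,\alpha(t))$ be the angle, at $\beta(t)$, between $\beta'(t)$ and the $\Hs$-geodesic from $\beta(0)$ to $\beta(t)$, and let $s(t)=d(\beta(0),\beta(t))$. Away from bending points $s'(t)=\cos\theta(t)$. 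Moreover $\theta$ obeys a differential inequality of the form $\theta'(t)\le \sin\theta(t)/\tanh s(t)$ along geodesic segments; this is the standard formula for the change of angle of a geodesic seen from a fixed point. At a bending point of weight $a$, $\theta$ jumps by at most $a$.

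The core step is a comparison argument showing that $\theta(t)$ stays below a threshold. That threshold should be some explicit $\theta_0<\frac{\pi}{2}$ when $\|\mu\|_L<G(L)$. On each window of length $L$ the total jump is bounded by $\|\mu\|_L$. Between jumps, the angle relaxes because $s$ grows, and $s$ grows at rate at least $\cos\theta_0$ as long as the threshold is respected. I would define $G(L)$ as the supremum of the values $g$ for which the comparison ODE, which adds a jump $g$ every time $L$ and otherwise follows $\theta'=\sin\theta/\tanh(\cos\theta_0\,t)$, stays bounded below $\frac{\pi}{2}$. This is the step I expect to be hardest. Near $t=0$ the quotient $1/\tanh s$ blows up, so the early behaviour must be handled separately, for instance by the observation that for $t\le L$ the angle is simply bounded by the accumulated bending, which is less than $G(L)$. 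A continuity (bootstrap) argument then shows that $\theta$ never reaches $\theta_0$.

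With the bound $\theta\le\theta_0$ in hand, we get $s(T)\ge\cos(\theta_0)T$, exactly as in the lemma on $\theta$-bounded laminations above. So $f_\mu$ is $\sec\theta_0$-bilipschitz, with constants depending only on $L$ and $G(L)$. For a general uniformly bounded $\mu$, approximate it by finite-leaved $\mu_n\to\mu$ with $\|\mu_n\|_L<G(L)$, which is possible because the strict inequality is open. The maps $f_{\mu_n}$ converge to $f_\mu$ uniformly on compacta (Epstein-Marden), so the uniform bilipschitz bounds pass to the limit. Finally, a bilipschitz map $\Hp\to\Hs$ is a quasi-isometric embedding, so its image is a convex pleated plane. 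It also extends continuously and injectively to the boundary by the Morse lemma, which gives the embedding $\bar f_\mu$.
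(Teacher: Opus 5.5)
The paper does not prove this theorem; it quotes it from Bridgeman--Canary--Yarmola, where $G(L)=-Lh'(x_0)$ comes from a tangent-line construction on the hill function and only gives $\theta_\mu\le h(x_0-L)\in(\frac{\pi}{2},\pi)$. Your architecture (finite-leaved case, an angle bound $\theta\le\theta_0<\frac{\pi}{2}$ giving $s(T)\ge\cos(\theta_0)T$, then a limit) is viable. Once repaired it is essentially the paper's own $\theta$-bounded criterion combined with the $\sec\theta$-bilipschitz lemma, and it yields a smaller but still explicit $G$, which is enough for the statement as phrased.

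\textbf{The core step fails as written.} Your inequality $\theta'\le\sin\theta/\tanh s$ is true but has the wrong sign to be useful. The actual formula between bending lines is $\theta'=-\sin\theta/\tanh s$, so the angle strictly decreases. The comparison flow you build, $\theta'=\sin\theta/\tanh(\cos(\theta_0)t)$ with positive jumps, increases between jumps and crosses $\frac{\pi}{2}$ for every jump size $g>0$. So your $G(L)$ would be $0$.

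With the correct sign, your worry about $t=0$ and your use of $s\ge\cos(\theta_0)t$ are misplaced. The factor $1/\tanh s>1$ only accelerates the decrease. A lower bound on $s$ bounds the decay from above, hence bounds $\theta$ from \emph{below}, which is the wrong direction. The missing idea is to discard $s$ entirely: $\tanh s<1$ gives $\theta'\le-\sin\theta$, the equation $h'=-\sin h$ solved by the hill function $h(x)=\cos^{-1}(\tanh x)$. Writing $\theta=h(g)$, the point $(g(t),\theta(t))$ slides down the graph of $h$ with $g'=1/\tanh s>1$, and changes by at most the bending weight at each leaf. No bootstrap between $\theta$ and $s$ is needed.

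\textbf{Periodic jumps are not a valid worst case.} The hypothesis only bounds the total bending in each window of length $L$, not where it sits. The paper's argument handles this as follows.
\begin{enumerate}
\item Pick $a>0$ with $h(a)-Lh'(a)\le\theta_0$, and suppose $\theta$ reaches $\theta_0$.
\item Let $T_0$ be the last earlier time with $\theta^-(T_0)\le h(a)$.
\item On $[T_0,T_0+L)$ the total upward jump is less than $-Lh'(a)$, so $\theta_0$ is not reached there.
\item The sliding portions have horizontal displacement exceeding $L$ on a part of the graph where the slope is at most $h'(a)<0$. Hence $\theta^-(T_0+L)\le h(a)$, contradicting the choice of $T_0$.
\end{enumerate}
This is what produces an explicit threshold: $r_L(\theta)$ in the paper, or the tangent-line $G(L)$ in Bridgeman--Canary--Yarmola.

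\textbf{The approximation step is not justified by openness.} The map $\mu\mapsto\|\mu\|_L$ is not upper semicontinuous in the weak$^*$ topology. Two leaves of weight $c$ at distance exactly $L$ are limits of pairs at distance slightly less than $L$, and those pairs have roundness $2c$. You need Epstein--Marden--Markovic's approximation by finite-leaved $\mu_n\to\mu$ with $\|\mu_n\|_L=\|\mu\|_L$, as the paper uses.
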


For all $\theta\in (0,\frac{\pi}{2}]$, we will obtain a bound on $||\mu||_L$ which guarantees that $\mu$ is $\theta$-bounded.
Our bounds will be defined using the hill function $h:\R\rightarrow (0,\pi)$ given by
$$h(t) = \cos^{-1}(\tanh(t))\quad\text{so}\quad h'(t)=-\sech(t)=-\sin h(t)\quad\text{and}\quad h(0)=\frac{\pi}{2}.$$
This function arises naturally in our situation since if $\alpha:\R\to\mathbb H^2$ is geodesic,
$x_0\in\mathbb H^2$ does not lie on the geodesic, $s(t)=d(x_0,\alpha(t))$, the angle 
$\theta(t)$ between the geodesic ray $\overrightarrow{x_0\alpha(t)}$ and $\alpha'(t)$ is monotonically decreasing,
then
$$s'(t)=\cos \theta(t)  \quad\text{and}\quad \theta'(t)=-\frac{\sin(\theta(t))}{\tanh s(t)}\le -\sin(\theta(t))$$
(see \cite[Lemma 4.4]{EMM}). 
We can then define a function $g:\R\to\R$ so that $h(g(t))=\theta(t)$, so that
the pair $(g(t), h(t))$ lies on the graph of the hill function.

Now consider the function
$$u_L(x) = h(x) -Lh'(x)$$
and notice that
$$\lim_{x\to +\infty} u_L(x)=0\quad\text{and}\quad \lim_{x\to -\infty} u_L(x)=\pi,$$
since $\lim_{x\to\pm\infty}h'(x)=0$, $\lim_{x\to+\infty} h(x)=\pi$ and $\lim_{x\to-\infty} h(x)=0.$
One may check that
$$u_L'(x) = -\sech(x) +L\sech(x)\tanh(x) = -\sech(x)(1-L\tanh(x)),$$  
so, if $L\in(0,1]$, then $u_L(x)<0$ for all $x\in\R$. Thus, if $L\in(0,1]$,
$u_L$ is a strictly decreasing function with image $(0, \pi)$.
So, given any $\theta \in [0,\pi/2]$ there is a unique value $a_L(\theta)$ so that $u_L(a_L(\theta)) = \theta$ (see Figure \ref{fig:hill-r}).  
We define $r_L(\theta)$ by the equation
$$r_L(\theta) = -Lh'(a_L(\theta))= L\sech(a_L(\theta)).$$  
If $L>1$, we define
$$r_L(\theta) =  -Lh'(L+h^{-1}(\theta)) = L\sech(L+\tanh^{-1}(\cos(\theta)).$$
In all cases, we define  $r(L) = r_L(\pi/2)$. 

We will give a more explicit description of these functions at the end of this section.

\begin{figure}[htbp] 
   \centering
   \includegraphics[width=4in]{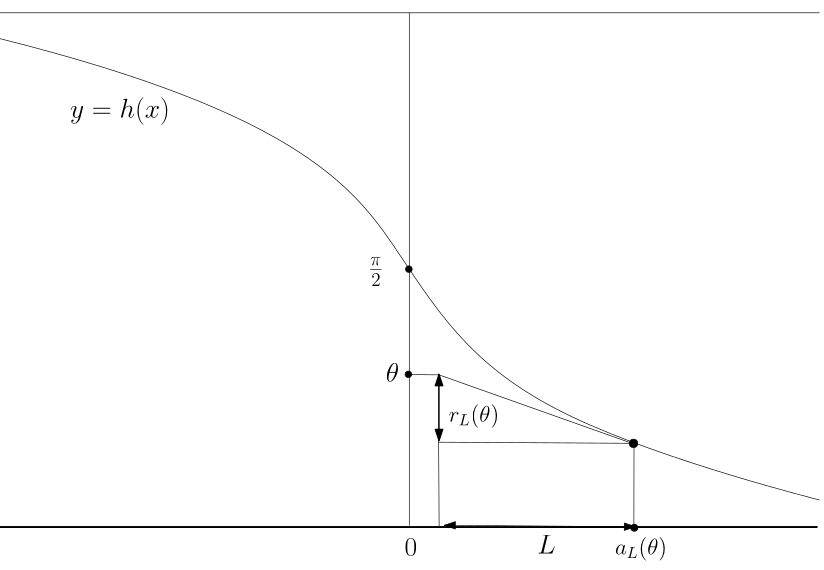} 
   \caption{Functions $a_L(\theta)$ and $r_L(\theta)$ for $L \leq 1$}
   \label{fig:hill-r}
\end{figure}

\medskip

We obtain the following strengthening of Theorem \ref{BCY}.

\begin{theorem}{($\theta$-bounded criterion)}
\label{theta bounded}
Let $\theta \in [0,\pi/2]$ and $L > 0$. If $\mu\in \mathcal{ML}(\Hp)$ and $\|\mu\|_L < r_L(\theta)$, 
then $P_\mu$ is a convex pleated plane and $\mu$ is  $\theta$-bounded. In particular, if $\|\mu\|_L < r(L)$,
then $P_\mu$ is $\theta$-bounded for some $\theta<\frac{\pi}{2}$.
\end{theorem}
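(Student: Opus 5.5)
Reduce first to the case where $\mu$ is finite-leaved (locally finite), then pass to the limit. Once we know that finite-leaved $\mu$ with $\|\mu\|_L<r_L(\theta)$ is $\theta$-bounded, the general case follows by approximation. Take finite-leaved $\mu_n\to\mu$ with $\|\mu_n\|_L<r_L(\theta)$; this is possible because the condition is open and roundness is controlled under standard approximations, as in \cite{BCY}. Then $f_{\mu_n}\to f_\mu$ uniformly on compacta together with one-sided derivatives, so the angles $\theta^-_{\mu_n}(p,q)$ converge to $\theta^-_\mu(p,q)$ and the bound $\theta$ persists. For $\theta<\pi/2$, $\theta$-boundedness gives a $\sec\theta$-bilipschitz embedding by the preceding lemma, hence a convex pleated plane. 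When $\theta=\pi/2$, the strict inequality $\|\mu\|_L<r_L(\pi/2)$ together with continuity of $r_L$ in $\theta$ gives $\|\mu\|_L<r_L(\theta')$ for some $\theta'<\pi/2$. This also yields the final assertion about $r(L)=r_L(\pi/2)$.

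The core step is a comparison (ODE) argument along a geodesic $\alpha$ starting at $p$, with $\beta=f_\mu\circ\alpha$, $s(t)=d(\beta(0),\beta(t))$ and $\theta(t)=\theta^\pm_\alpha(t)$. We track the pair $(g(t),\theta(t))$, where $\theta(t)=h(g(t))$ so that the point sits on the hill graph. On geodesic segments between bending lines, the cited \cite[Lemma 4.4]{EMM} gives $\theta'\le-\sin\theta$. Since $h'=-\sin h$, this means $g'(t)\ge 1$: the point moves right along the hill at unit speed or faster. At a bending line of weight $a$, the angle jumps up by at most $a$, because the new direction differs from the old one by a rotation of angle $a$. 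So $\theta$ jumps by at most $a$ while $g$ jumps left.

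Now organize $[0,\infty)$ into windows of length $L$, each carrying total bending less than $r=\|\mu\|_L$. Over a window the point advances by at least $L$ in the $g$-coordinate and loses height by at most $r$. The claim to prove is that the set $\{(x,y): y\le\theta,\ \text{a suitable barrier}\}$ is invariant. Take the threshold $x^*=a_L(\theta)$ for $L\le1$, where $u_L(x^*)=h(x^*)-Lh'(x^*)=\theta$ and $r_L(\theta)=-Lh'(x^*)$. Starting at height $h(x^*)$, after moving $L$ to the right the height drops to about $h(x^*+L)\le h(x^*)+Lh'(x^*)$; this uses convexity of $h$ for $x\ge0$, or the tangent-line estimate in general. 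A jump of at most $r_L(\theta)=-Lh'(x^*)$ then returns it to at most $u_L(x^*)$ in the linearized sense, which is $\le\theta$.

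For $L>1$ the function $u_L$ is no longer monotone, so the definition changes to $r_L(\theta)=-Lh'(L+h^{-1}(\theta))$. The worst case is then a bend arriving right after the point has traveled a full window from height $\theta$, and the barrier point is $h^{-1}(\theta)+L$. The base case at $t=0$ has $\theta=0$, and the first window puts the point at height $\le r<\theta$.

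The main obstacle is making the window argument rigorous. Bends can occur anywhere in a window, so we need a worst-case placement lemma: given total bending $<r$ in every half-open interval of length $L$, the maximum of $\theta$ is achieved when all the bending concentrates at the endpoint of the window. Concavity of $h$ on $x\le0$ versus its convexity on $x\ge0$ complicates this when the trajectory crosses the top of the hill. I would first try a continuous barrier: show that $\Phi(t)=\theta(t)+(\text{bending in }(t-L,t])$ cannot exceed $\theta$, comparing with the function $u_L$ and using $g'\ge1$ and the monotonicity of $u_L$ when $L\le1$. The case $L>1$ would be handled separately with the shifted barrier, mirroring the argument of \cite{BCY} for $G(L)$.
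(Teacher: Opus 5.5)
You have the right framework, and it is the paper's: track $(g(t),\theta(t))$ on the graph of $h$, use $g'\ge 1$ on drift intervals, bound jumps by leaf weights, and reduce to finite-leaved $\mu$. Getting embeddedness from the preceding bilipschitz lemma, instead of comparing $r_L(\theta)<G(L)$ and invoking \cite{BCY}, is a legitimate shortcut. The gap is that the step which actually proves $\theta$-boundedness is never established, and the heuristic you give for it runs the wrong way.

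Since $h''(x)=\sech x\tanh x\ge 0$ for $x\ge 0$, $h$ is convex there and lies \emph{above} its tangent lines. So $h(x^*+L)> h(x^*)+Lh'(x^*)$, the reverse of what you wrote. To the right of $x^*=a_L(\theta)$, the drop guaranteed by $g'\ge1$ is only $h(x^*)-h(x^*+L)<L\sech x^*=r_L(\theta)$, which cannot absorb a jump of size up to $r_L(\theta)$. Windows laid down along $[0,\infty)$ also give no control of the height at the start of each window.

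The missing idea is to use the steepness of $h$ to the \emph{left} of $x^*$, and to anchor the window there. The paper does this as follows.
\begin{itemize}
\item Let $T_1$ be the first time with $\theta(t)\ge\theta$, and $T_0$ the last time before $T_1$ with $\theta^-(t)\le h(a)$, where $a=a_L(\theta)$.
\item Jumps in $[T_0,T_0+L)$ total less than $r_L(\theta)$. Hence $\theta(s)<h(a)+r_L(\theta)=\theta$ on that interval, and $T_1\ge T_0+L$.
\item Off the bending lines in $(T_0,T_1)$ one has $h(a)<\theta(t)<\theta\le\pi/2$, i.e.\ $g(t)\in(0,a)$, where $|h'|\ge\sech a$.
\item Using $g'>1$ and the mean value theorem on each drift interval, the drift over $[T_0,T_0+L]$ lowers the angle by more than $L\sech a=r_L(\theta)$. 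This beats the total jump, so $\theta^-(T_0+L)<h(a)$, contradicting the choice of $T_0$.
\end{itemize}
For $L>1$ the paper runs the same argument with $b=L+h^{-1}(\theta)$, using only $h(b)-Lh'(b)\le h(b-L)=\theta$. Because one stops at the first time the target angle $\theta\le\pi/2$ is reached, $g\ge 0$ throughout. So crossing the top of the hill never arises, and no worst-case placement lemma is needed.

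Your fallback barrier does not close the gap as stated. $\Phi(t)$, the angle plus the bending in $(t-L,t]$, counts the most recent jump twice, since that jump is already in $\theta(t)$. The inequality $\Phi\le\theta$ is in fact false in simple cases. Take leaves orthogonal to $\alpha$, spaced exactly $1$ apart, each of weight $.73<r(1)$. Then $\beta$ is a convex planar polygonal path. The angle just after each jump tends to roughly $1.25<\pi/2$, but $\Phi$ there tends to roughly $1.98>\pi/2$.

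Two smaller points in your reduction step:
\begin{itemize}
\item $\|\cdot\|_L$ is not weak$^*$ upper semicontinuous: two leaves at distance exactly $L$ can be approximated by leaves at distance $<L$. So ``the condition is open'' does not suffice; you need an approximation that preserves roundness. The paper uses \cite[Lemma 4.6]{EMM2}.
\item One-sided derivatives need not converge at atoms of $\mu$, since an approximating leaf can land on either side of $q$. Instead, pass $\theta$-boundedness to the limit at points off the atoms, then use the one-sided continuity of $\theta^\pm_\mu$.
\end{itemize}
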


Theorem \ref{Main Theorem 5} follows immediately from Proposition \ref{theta-mod} and Theorem \ref{theta bounded}.
We will discuss the proof of Corollary \ref{neighborhoods} at the end of the section.

\begin{proof}
We first show that, $r_L(\theta) < G(L)$ where $G$ is the function in Theorem \ref{BCY}. 
This guarantees that $P_\mu$ is a convex pleated plane.
We recall from \cite{BCY} that
$$G(L) = h(x_0-L)-h(x_0) = -Lh'(x_0)$$ 
where  $x_0$  is the unique point such that the tangent line to $(x_0,h(x_0))$ on the graph of $h$  intersects the graph of
$h$ again at the point $(x_0-L, h(x_0-L))$. They further show that $x_0 \in (0,L).$  

Suppose that $L\le1$, Then, by definition,
$$u_L(x_0)=h(x_0)-Lh'(x_0)=h(x_0-L) > \frac{\pi}{2} \geq \theta = u_L(a_L(\theta)).$$
Since $u_L$ is strictly decreasing, $a_L(\theta)>x_0$.
Since $h'$ is increasing on $(0,\infty)$ (but negative),
$$r_L(\theta) =-Lh'(a_L(\theta))  < -L h'(x_0) = G(L).$$
If $L>1$,  we simply observe that since $x_0\in (0,L)$ and $h'$ is increasing on $[0,\infty)$, we have
$$r_\theta(L)\le r_{\pi/2}(L)=-Lh'(L)<-Lh'(x_0)=G(L).$$

We also recall, see \cite[Lemma 4.3]{BCY}, that
if $\|\mu\|_L < G(L)$, then if $p,q\in\Hp$, then
$$\theta_\mu(p,q)\le h(x_0-L)<\pi.$$

We first assume that $\mu$ is a uniformly bounded measured lamination on $\mathbb H^2$ with locally finite support $\mathrm{supp}(\mu)$.
Let $\alpha:[0,\infty)\to\Hp$ be a unit-speed geodesic and  let $\beta=f_\mu\circ \alpha$.
Let $\{t_i\}_{i=1}^r=\alpha^{-1}(\mathrm{supp}(\mu))$ (where $r\in\mathbb N\cup\{\infty\}$) ordered so that $t_{i+1}>t_i$ for all $i$.

For all $t>0$, we define
$$\theta(t)=\theta^+(t)=\theta_\mu(\alpha(0),\alpha(t))\quad\text{and let}\quad\theta^-(t)=\theta^-_\mu(\alpha(0),\alpha(t)).$$
Notice that if $t\ne t_i$ (for some $i$), then $\theta^-(t)=\theta(t)$ and observe that
$$u_i = |\theta^+(t_i)-\theta^-(t_i)| \leq \phi_i=\mu(m(\alpha(t_i)))$$
where $\mu(m(\alpha(t_i)))$ is the measure of the leaf of $\mu$ passing through $\alpha(t_i)$.
One may calculate (see \cite[Lemma 4.4]{EMM}) that if $s(t) = d(\alpha(0),\alpha(t))$ and $t\ne t_i$ for any $i$, then
$$s'(t) = \cos(\theta(t)) \qquad \theta'(t) = \frac{-\sin(\theta(t))}{\tanh(s(t))} \leq -\sin(\theta(t)).$$
Thus on the intervals $(t_i,t_{i+1})$, the angle $\theta(t)$ is monotonically decreasing from $\theta^+(t_i)$ to $\theta^-(t_{i+1})$. 

We define a function $g:(t_1,\infty)\setminus \{t_i\}_{i=1}^r\to\R$  by the property that $h(g(t)) = \theta(t)$. 
This is well-defined and continuous  since $h$ is strictly monotone, continuous and has image $(0,\pi)$.
Similarly we define $g^\pm(t_i)$ so that $h(g^\pm(t_i)) = \theta^\pm(t_i)$.
This definition guarantees that $H(t)=(g(t),\theta(t))$ and $H^\pm(t_i)=(g^\pm(t_i),\theta^\pm(t_i))$ all lie on the graph of the hill function.

On $(t_i,t_{i+1})$ the image of $H(t)$ slides downward and to the right. 
Notice that if $t\in (t_i,t_{i+1})$ and $\theta(t)\in \left(0,\frac{\pi}{2}\right)$, then
 $$h'(g(t))g'(t) = \theta'(t) < -\sin(\theta(t)) = -\sin(h(g(t)) = h'(g(t)),$$
so $g'(t) > 1$. In particular, if $\theta(t_i)<\frac{\pi}{2}$, then
$$g^+(t_{i+1}) - g^-(t_i) > t_{i+1}-t_i.$$
At each $t_i$, the function transitions from $(g^-(t_i),\theta^-(t_i))$ to
$(g^+(t_i),\theta^+(t_i))$, so it jumps upward by $u_i \le \phi_i$ and to the left by  $g^-(t_i)-g^+(t_i)$.

First, suppose that $L\le 1$ and $\theta\le\frac{\pi}{2}$.
Let $a = a_L(\theta)$ and notice that, since  $h(a) -Lh'(a) = \theta$, we must have $h(a)<\theta$. 
Assume that $\theta(t) \geq \theta$ for some $t$ and let
$$T_1 = \inf\{ t \in (t_1,\infty) \ | \ \theta(t) \geq \theta\}\quad\text{and}\quad T_0 = \sup\{ t \in (t_1,T_1) \ | \ \theta^-(t) \leq h(a)\}.$$
Notice that since $\theta^-$ is continuous from the left, $\theta^-(T_0)\le h(a)$.

Since there is a total upward shift less than $r_L(\theta)$ in the interval $[T_0,T_0+L)$, we see that 
$$\theta(s) < h(a)+r_L(\theta) = h(a) -Lh'(a) = \theta\le\frac{\pi}{2}$$
for all $s\in[T_0,T_0+L)$. Thus, 
 $T_1 \geq T_0+L$. 

We can make the argument above more concrete in the following fashion. Let $\{t_j,\ldots,t_k\}=\{t_i\}\cap (T_0,T_0+L)$.
If this set is non-empty, let $T_0=t_{j-1}$ and $T_0+L=t_{k+1}$, while if the set is empty let $T_0=t_{j-1}$ (for some arbitrary choice of $j$) and $T_0+L=t_j=t_{k+1}$.
In either case, $\{t_{j-1},\ldots,t_{k+1}\}$ is a partition of $[T_0,T_0+L]$.
If $s\in [t_{\ell},t_{\ell+1})$ for some $\ell\in\{j-1,\ldots,k\}$, then
\begin{eqnarray*}
\theta^-(s)&=&\theta^{-}(T_0)+ \left(\sum_{i=j-1}^\ell \theta^+(t_i)-\theta^-(t_i)\right)+\left(\sum_{i=j-1}^{\ell-1}  \theta^-(t_{i+1})-\theta^+(t_i)\right)+\theta(s)-\theta(t_\ell)\\
& < & h(a)+\left(\sum_{i=j-1} \theta^+(t_i)-\theta^-(t_i)\right)\\
&< &  h(a)+r_L(\theta).\\
\end{eqnarray*}
Here we use the facts that  the third term  is  strictly negative and the fourth term is non-positive (since $\theta(t)$ is strictly decreasing on each interval) and that
the second term is bounded above by $r_\theta(L)$ by assumption.

To complete the proof for $L \leq 1$, we now show that $\theta^-(T_0+L) \leq h(a)$ which contradicts our definition of $T_0$. Since $g'(t)>1$ if $t\ne t_i$ and  $\theta(t)\le\frac{\pi}{2}$, there  is a horizontal shift right of at least $L$ on the interval $[T_0,T_0+L)$. 
Since $h'$ is decreasing in $[0,a]$, the total downward shift (associated with the horizontal shift)
is greater than $h'(a)L$. Since the total upward shift is less than $r_L(\theta)=-Lh'(a)$, we conclude that  $\theta^-(T_0+L) \leq h(a)$ giving us our contradiction. 

Using the notation above, we can again make this more concrete.
Consider the interval $[t_\ell, t_{\ell+1})$ for some $\ell \in \{t_{j-1},\ldots,t_{k}\}$. Since  $h(g^\pm(t_i)) = \theta^\pm(t_i)$ then by definition of $T_0$ the interval $[g^+(t_\ell),g^-(t_{\ell+1})]$ is contained in  $[0,a]$. Since $h'$ is decreasing on $[0,\infty)$ and $g'(t) > 1$ on $(t_{\ell},t_{\ell+1})$, the mean value theorem 
implies that
$$\theta^-(t_{\ell+1})-\theta^+(t_\ell) \leq h'(a)(g^-(t_{\ell+1})-g^+(t_\ell)) < h'(a)(t_{\ell+1}-t_\ell).$$
Therefore
\begin{eqnarray*}
\theta^-(T_0+L)&=&\theta^{-}(T_0)+ \left(\sum_{i=j-1}^k \theta^+(t_i)-\theta^-(t_i)\right)+\left(\sum_{i=j-1}^{k}  \theta^-(t_{i+1})-\theta^+(t_i)\right)\\
& <& h(a)+\left(\sum_{i=j-1}^k \theta^+(t_i)-\theta^-(t_i)\right) + h'(a)\left(\sum_{i=j-1}^{k}  t_{i+1}-t_i\right)\\
&\le&  h(a)+r_L(\theta)+h'(a)L \\
&\le &   h(a)\\
\end{eqnarray*}

The argument follows the same outline if $L>1$ and $\theta\le\frac{\pi}{2}$. 
We define $b = L+h^{-1}(\theta)$, or equivalently, by the equation $h(b-L) = \theta$, so $r_L(\theta)=-Lh'(b)$. As $h$ is decreasing  and concave on $[0,\infty)$  
$$h(b)<\theta\quad\text{and}\quad h(b)-Lh'(b)<h(b-L)=\theta.$$ 
Assume that $\theta(t) \geq \theta$ for some $t$ and let
$$T_1 = \inf\{ t \in (t_1,\infty) \ | \ \theta(t) \geq \theta\}\quad\text{and}\quad T_0 = \sup\{ t \in (t_1,T_1) \ | \ \theta^-(t) \leq h(b)\}.$$
If $s\in[T_0,T_0+L)$, then 
$$\theta(s) < h(b)+r_L(\theta) = h(b) -Lh'(b) <h(b-L)= \theta,$$
so $T_1\ge T_0+L$. We argue as before that
$$\liminf_{s\rightarrow T_0+L} \theta(s) \le h(b) -Lh'(b)+Lh'(b) = h(b)$$
which again contradicts our definition of $T_0$ and completes the proof when $L>1$.

If $\mu$ is any measured lamination  on $\Hp$ with $\|\mu\|_L < r_L(\theta)$ then
by Epstein-Marden-Markovic \cite[Lemma 4.6]{EMM2} there exists a sequence $\mu_n$  of finite-leaved measured laminations converging to $\mu$ 
such that $\|\mu_n\|_L = \|\mu\|_L$ for all $n$. By \cite[Corollary 4.8]{BCY}  this implies that each map $f_{\mu_n}$ is a $K$-bilipschitz embedding  for 
some $K$ depending only on $L$ and $\|\mu\|_L$. Then by \cite[Theorem III.3.11.9]{EM} the maps $f_{\mu_n}$ converge uniformly on compact sets 
to $f_\mu$. It follows that as $f_{\mu_n}$ are $\theta$-bounded, that $f_\mu$ is also $\theta$-bounded.  
\end{proof}

\medskip\noindent
{\bf Remark:}  The argument we used for $L>1$ works for all $L$. However,
 if $L \leq 1$ and $\theta\le\frac{\pi}{2}$, then $0 < a_L(\theta)< L+h^{-1}(\theta)$, so the argument we give when $L\le 1$, yields a better bound.

\subsection{Explicit description of bounds}
For $L>1$,
$$r_L(\theta) =  -Lh'(L+h^{-1}(\theta)) = L\sech(L+\tanh^{-1}(\cos(\theta)).$$
It follows that
$$r(L) = r_L(\pi/2) = L\sech(L).$$
For $L \leq 1$
$$r_L(\theta) = -Lh'(a_L(\theta)) = L\sech(a_L(\theta))$$
then
$$a_L(\theta) = \cosh^{-1}\left(\frac{L}{r_L(\theta)}\right).$$ 
As $a_L(\theta)$ is defined by the equation $\theta = u_L(a_L(\theta))$ and
$$u_L(x) = h(x) -  Lh'(x) = \cos^{-1}(\tanh(x))+L\sech(x)$$
then
$$\theta = u_L\left(\cosh^{-1}\left(\frac{L}{r_L(\theta)}\right)\right) = \cos^{-1}\left(\sqrt{1-\frac{r_L(\theta)^2}{L^2}}\right)+r_L(\theta).$$
Equivalently
$$\theta =  \sin^{-1}\left(\frac{r_L(\theta)}{L}\right)+r_L(\theta).$$
\begin{figure}[htbp] 
   \centering
   \includegraphics[width=3in]{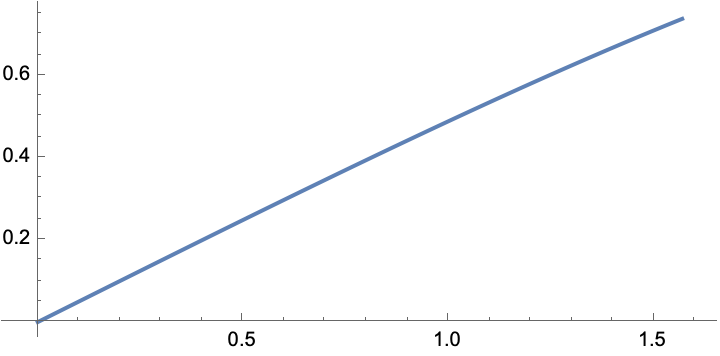} 
   \caption{Plot of $r_1(\theta)$ on $[0,\frac{\pi}{2}]$}
   \label{fig:r_1}
\end{figure}
Thus $r_L$ is the inverse function of $y = \sin^{-1}(x/L)+x$.  Therefore $r_L(\theta)$ is the solution to the equation
$$x = L\sin(\theta-x).$$
As $r(L) = r_L(\pi/2)$, it follows that $r$ is the inverse function of the function 
$$L(x) = x\sec(x).$$
In particular for $L=1$, 
$$r(1) = r_1(\pi/2) \simeq 0.739085.$$

\subsection{Proof of Corollary \ref{neighborhoods}}
Theorems \ref{Main Theorem 2} and \ref{Main Theorem 5} together imply that if $\rho\in U(S)$ is not fuchsian, then $\rho$ is not a critical point
of the entropy function.
Theorems \ref{Main Theorem 3} and \ref{Main Theorem 5} together imply that if $\rho\in V(S)$ is not fuchsian, then $\Ad\rho$ is the linear part of
a proper affine action of $\pi_1(S)$ on $\sl(2,\C)$. It remains to show that $US)$ and $V(S)$ are open.

We first observe that if $\alpha:\R \to \mathbb H^2$ is a geodesic and $\mu$ is a geodesic lamination on $\mathbb H^2$, then
$$\mu\left(\alpha([a, b))\right)=\lim_{\epsilon\to 0^+}\mu\left(\alpha((a-\epsilon,b-\epsilon))\right).$$
It follows that $||\mu||_L$ can also be defined to be the supremum of the transverse measure of any open geodesic arc of length $L$.
One can then easily check that $||\cdot||_L$ is a continuous function on $\mathcal T(S)\times \mathcal{ML}(S)$.
Keen and Series \cite{keen-series} showed that the map from $QF(S)$ to $\mathcal T(S)\times \mathcal{ML}(S)$ which takes $\rho$
to $(X^\nu(\rho),\beta_\nu)$ is continuous for both $\nu\in\{\pm\}$. It follows that $U(S)$ and $V(S)$ are open subsets of $QF(S)$.
\qed

\section{Teichm\"uller distance, Schwarzian derivatives and quasicircles}
\label{classical}
In this section, we obtain versions of our main results in terms of more
classical invariants.

Our first description is expressed in terms of the Schwarzian derivative.
Given  a locally univalent map  $f:\Delta\rightarrow\hat\C$ and $z\in\Delta$, let $M_{f,z}$ be the M\"obius transformation with the same 
2-jet as $f$ at $z$. It follows that $M_{f,z}^{-1}\circ f$ has the same two-jet as the identity at $z$ and the {\em Schwarzian derivative} $S(f)$  of $f$  at $z$ is then given by
$$S(f)(z) = ( M_{f,z}^{-1}\circ f)'''(z).$$
In particular, if $f$ is  a M\"obius transformation, then $S(f) = 0$.
In general  the Schwarzian derivative measures of how close $f$ is to being M\"obius. 
The Schwarzian derivative is also given by the equation
$$S(f) = \left(\frac{f''}{f'}\right)'-\frac{1}{2} \left(\frac{f''}{f'}\right)^2.$$
The Schwarzian derivative is a quadratic differential on $\Delta$. Denoting the space of quadratic differentials on a Riemann surface $X$ by $Q(X)$ we define the norm on $Q(X)$ by 
$$\|\phi\|_\infty = \sup_{z\in X} \frac{|\phi(z)|}{\rho_X(z)}$$
where $\rho_X(z) |dz|^2$ is the hyperbolic metric on $X$. 
Classical results of Nehari \cite{nehari} show that if $f:\Delta\rightarrow \hat\C$ is univalent then $\|S(f)\|\leq 3/2$ 
and if $f:\Delta\rightarrow \hat\C$ is locally univalent and $\|S(f)\|\leq 1/2$, then $f$ is univalent.

Let $\rho \in \QF(S)$ with limit set $\Lambda(\rho)$ and complementary Jordan domains $\Omega_+(\rho)$ and $\Omega_-(\rho)$. We let $f_\nu(\rho):\Delta \rightarrow \Omega_\nu$ be the maps uniformizing $\Omega_\nu(\rho), \nu \in \{ \pm \}$ and define quadratic differentials  $\tilde\phi_\nu(\rho) = S(f_\nu(\rho))$ in $Q(\Delta)$. Letting $X_\nu(\rho) = \Omega_\nu/\rho(\pi_1(S))$, then $\tilde\phi_\nu(\rho)$ descends to a quadratic differential $\phi_\nu(\rho)  \in Q(X_\nu(\rho))$.

By Bers simultaneous uniformization the map $QF(S) \rightarrow \mathcal T(S)\times\mathcal T(\bar S)$ given by $\rho \rightarrow (X_+(\rho),X_-(\rho))$ is a diffeomorphism (see \cite{bers}). We let $(X,Y) \rightarrow \rho(X,Y)$ be the inverse of this map.

For $X \in \mathcal T(S)$  the {\em Bers embedding} is the map
$B_X:\mathcal T(\overline{S})\rightarrow Q(X)$ given by
$$B_X(Y) = \phi_+(\rho(X,Y)).$$

Nehari's bounds imply that the image of the Bers embedding $B_X$ in the Banach space $(Q(X), \|\cdot\|_\infty)$ is contained in the ball of 
radius 3/2 about $0$ and contains the ball of radius $1/2$ about $0$. We first show that for a smaller ball about zero, we obtain our criterion.

 \begin{theorem}
 \label{schwarzian bound}
If $\rho \in \QF(S)$ and $0 < \|\phi_\nu(\rho)\| < .0739$ for some $\nu \in \{\pm\}$, 
then $\rho$ is not a critical point of $h$ and 
there is a proper affine action of $\pi_1(S)$ on $\mathfrak{sl}(2,\CC)$ with linear part  $\Ad(\rho)$.
\end{theorem}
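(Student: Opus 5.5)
The plan is to reduce the statement to Corollary \ref{neighborhoods}, that is, to Theorems \ref{Main Theorem 5}, \ref{Main Theorem 2} and \ref{Main Theorem 3}, by converting the hypothesis on $\|\phi_\nu(\rho)\|$ into a roundness bound $\|\beta_\nu\|_1<r(1)\approx .739$. The numerical constant $.0739 = r(1)/10$ suggests the key input is a linear comparison
$$\|\beta_\nu\|_1\le 10\,\|\phi_\nu(\rho)\|_\infty,$$
valid at least when $\|\phi_\nu(\rho)\|_\infty$ is small.

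\textbf{Step 1: non-fuchsian.} The hypothesis $\|\phi_\nu(\rho)\|>0$ means $\Omega_\nu(\rho)$ is not a round disk. Hence $\rho$ is not fuchsian, and the convex core has two bent boundary components with laminations $\beta_\pm$.

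\textbf{Step 2: the comparison.} I expect this to come from the Bridgeman / Bridgeman--Canary style estimates on the convex hull boundary. Lift to $\Delta$ and look at a unit-length geodesic arc on $\tilde\partial C_\nu(\rho)$. The support planes along the arc correspond to maximal round disks in $\Omega_\nu$. Via the Thurston/Epstein--Marden nearest-point retraction, the rotation of these circles is controlled by how far the uniformizing map $f_\nu(\rho)$ is from a M\"obius map on a hyperbolic ball of bounded radius. That distance is measured by the Schwarzian through the osculating M\"obius maps $M_{f,z}$: integrating $S(f)$ along a path gives the derivative of $z\mapsto M_{f,z}$ in $\sl(2,\C)$. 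So the total bending over the arc is at most (length of the relevant path) $\times\,\|S(f)\|_\infty$ times a universal factor. I would first look for this inequality with constant $10$ in the literature, and otherwise derive it by this osculating-M\"obius-map argument. This yields $\|\beta_\nu\|_1<r(1)$, so Theorem \ref{Main Theorem 5} shows $\Omega_\nu(\rho)$ is moderately bent. Theorem \ref{Main Theorem 2} then gives that $\rho$ is not a critical point of $h$.

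\textbf{Step 3: the other side.} The proper affine action needs both $\Omega_\pm$ to be moderately bent, via Theorem \ref{Main Theorem 3}. I therefore also need to control $\phi_{-\nu}(\rho)$. I would try to do this through Ahlfors' quasiconformal reflection. Since $\|\phi_\nu\|$ is far below Nehari's $1/2$, the limit set is a $K$-quasicircle with $k=(K-1)/(K+1)\le C\|\phi_\nu\|$. The standard estimate $\|\phi_{-\nu}\|\le 6k$ then gives $\|\phi_{-\nu}\|$ small, and Step 2 applied again yields $\|\beta_{-\nu}\|_1<r(1)$.

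\textbf{Main obstacle.} Steps 2 and 3 are where the difficulty lies: getting constants sharp enough to land exactly on $.0739$. If Step 3 loses too much, I would instead use the convex hull directly. Small bending $\beta_\nu$ forces the convex core to be thin, and since $\beta_+$ and $\beta_-$ bind, this in turn bounds $\beta_{-\nu}$ through the Bridgeman--Canary--Yarmola comparison. The hardest step is making this quantitative.
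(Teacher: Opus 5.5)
Your architecture matches the paper's: convert the Schwarzian bound into an $L$-roundness bound, then apply Theorems \ref{Main Theorem 5} and \ref{Main Theorem 2}. Step~1 is fine. Step~2, however, carries the entire quantitative content of the statement, and it is only a placeholder resting on a misreading of the constant.

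The paper's input is the explicit Bridgeman--Tee estimate, valid for every $L>0$: if $\|\phi_\nu(\rho)\|_\infty<\frac{1}{2\sqrt{1+e^{2L}}}$, then
$$\|\beta_\nu\|_L\le 2\tan^{-1}\left(\frac{2\|\phi_\nu(\rho)\|_\infty e^L}{\sqrt{1-4\|\phi_\nu(\rho)\|_\infty^2}}\right)=F_L\big(\|\phi_\nu(\rho)\|_\infty\big).$$
One then optimizes over $L$. Since $r(L)=r$ exactly when $L=r\sec r$ (for $L\le 1$), the admissible threshold is $F^{-1}_{r\sec r}(r)$. Taking $r=.611$, i.e.\ $L\approx .746$, gives $\approx .07396$. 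The agreement with $r(1)/10\approx .07391$ is a coincidence.

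At $L=1$ the same estimate is $F_1(x)=4ex+O(x^3)\approx 10.9\,x$, and it gives only the threshold $\approx .0705$. So your inequality $\|\beta_\nu\|_1\le 10\|\phi_\nu(\rho)\|_\infty$ is stronger than what is available. Your osculating-M\"obius sketch (``times a universal factor'') produces no specific constant. Without a concrete Schwarzian-to-roundness estimate, and the freedom to take $L\neq 1$, the argument does not reach $.0739$.

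On Step~3, you are right that Theorem \ref{Main Theorem 3} needs both $\Omega_+(\rho)$ and $\Omega_-(\rho)$ to be moderately bent. The paper's proof does not address this either. It bounds $\|\beta_\nu\|_L$ for the one given $\nu$ and then cites Theorem \ref{Main Theorem 3}. As written, it therefore justifies the affine-action conclusion only when the Schwarzian bound holds for both $\nu$. Compare Theorem \ref{teich-thm}, where the symmetric Teichm\"uller-distance hypothesis would control $\phi_+$ and $\phi_-$ simultaneously.

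Your repair, though, does not close this at the stated constant. In the paper's normalization (Nehari constants $\frac12$ and $\frac32$), the Ahlfors--Weill extension has dilatation $k=2\|\phi_\nu(\rho)\|_\infty$. Transferring it to a quasiconformal extension of the Riemann map of $\Omega_{-\nu}(\rho)$ and applying K\"uhnau's bound ($6k$ classically, $\frac32 k$ here) gives only
$$\|\phi_{-\nu}(\rho)\|_\infty\le 3\|\phi_\nu(\rho)\|_\infty.$$
This feeds into the Bridgeman--Tee step only when $\|\phi_\nu(\rho)\|_\infty$ is below roughly $.0246$. Your ``thin convex core plus binding'' fallback is not quantitative at all. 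For the second conclusion you should either assume the bound for both $\nu$ or settle for a smaller constant.
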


\begin{proof}
Let $\beta_\nu$ be the bending laminations of $\rho$.
Bridgeman and Tee \cite{BrTee} show that  if  $\|\phi_\nu(\rho)\|_\infty < \frac{1}{2}\frac{1}{\sqrt{1+e^{2L}}}$, then
$$ \|\beta_\nu\|_L \leq 2\tan^{-1}\left( \frac{2\|\phi_\nu(\rho)\|_\infty e^L}{\sqrt{1-4\|\phi_\nu(\rho)\|_\infty^2}}\right) = F_L(\|\phi_\nu(\rho)\|_\infty).$$
If  we choose $\epsilon>0$ so that $\epsilon < F_L^{-1}(r(L))$ for some $L$, then if $\|\phi_\nu(\rho)\|_\infty < \epsilon$ their result implies that
$$\|\beta_\nu\|_L < r(L).$$
Theorems \ref{Main Theorem 2}, \ref{Main Theorem 3}  and \ref{Main Theorem 5} then imply that $\rho$
is not a critical point of entropy and that there is a proper affine action on $\mathfrak{sl}(2,\CC)$ with linear part $\Ad\rho$. 

Recall that  $L(r) = r/\cos(r)$ when $r\in (0,r(1)]$ and define $G:(0,r(1)]\rightarrow \R$ by $G(r) = F_{L(r)}^{-1}(r)$. 
So if $r\le r(1)$ and  $\epsilon <  G(r)$, then $\epsilon < F_{L(r)}^{-1}(r(L))$ and Theorem \ref{schwarzian bound} holds for this value of $\epsilon$. 
Since  $r(1) \approx .73908$, we may choose $r = .611$ and compute that
$$G(.611) \approx .0739643$$
so our theorem holds with $\epsilon=.739$. 
\end{proof}

We now get a criterion involving the Teichm\"uller distance between $\Omega_+/\rho(\pi_1(S))$ and
$\Omega_0/\rho(\pi_1(S))$.

\begin{theorem}
\label{teich-thm}
 If $\rho \in \QF(S)$ and $0 < d_T\left(X_+(\rho),\overline{X_-(\rho)}\right) < .049$,
then $\rho$ is not a critical point of $h$ and 
there is a proper affine action of $\pi_1(S)$ on $\mathfrak{sl}(2,\CC)$ with linear part equal to $\Ad\rho$.
\end{theorem}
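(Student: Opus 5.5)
The plan is to reduce Theorem \ref{teich-thm} to Theorem \ref{schwarzian bound}. We bound the Schwarzian norm $\|\phi_\nu(\rho)\|_\infty$ in terms of the Teichm\"uller distance $d=d_T\big(X_+(\rho),\overline{X_-(\rho)}\big)$, and then check that $d<.049$ forces $\|\phi_\nu(\rho)\|_\infty<.0739$ for some $\nu$. We also need $\rho$ to be non-fuchsian. This is automatic when $d>0$, since the Fuchsian locus is exactly the diagonal in the Bers parametrization. Non-fuchsian also gives $\phi_\nu(\rho)\neq 0$, because $\phi_\nu(\rho)=0$ would force $\Omega_\nu$ to be a round disk and hence $\rho$ to be fuchsian.

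The key step is a quantitative comparison between the Bers embedding and Teichm\"uller distance. Let $X=X_+(\rho)$ and $Y=X_-(\rho)$. If $d_T(X,\overline Y)=d$, there is a $K$-quasiconformal map with $K=e^{2d}$ relating $X$ and $\overline Y$. Lifting, $\Lambda(\rho)$ is the image of a round circle under a $K$-quasiconformal map of $\hat\C$ that is conformal on one side; more precisely, it is conformal on $\Omega_+$ after normalizing so that the base point of the Bers slice is fuchsian. This is the standard setup for the Kraus--Nehari / Lehto type estimate
$$\|S(f)\|_\infty \le \tfrac{3}{2}\,k,\qquad k=\frac{K-1}{K+1}=\tanh d,$$
where $f$ is the uniformizing map of the domain onto which the quasiconformal map restricted to the disk is conformal, and the norm is taken with respect to the hyperbolic metric as normalized in this paper.

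The first thing to check is the normalization. The paper's norm uses $\rho_X$ as the density of $|dz|^2$, and Nehari's bound reads $3/2$ there. Lehto's theorem then gives $\|\phi_\nu(\rho)\|_\infty\le \frac32\tanh(d)$. To be safe, I would instead use the sharper Bers-embedding estimate of the form $\|B_X(Y)\|\le \frac32\tanh\big(d_T(X,\overline{Y})\big)$. Plugging in $d<.049$ gives $\frac32\tanh(.049)\approx .0735<.0739$, which is consistent with the constant in the statement. This strongly suggests the intended argument is exactly this estimate, with $\phi_+$ (so $\nu=+$) chosen.

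The main obstacle is justifying the inequality $\|\phi_+(\rho)\|_\infty\le \frac32 k$ with the correct dilatation $k=\tanh d$. The point to get right is that the quasiconformal map realizing the Teichm\"uller distance between $\overline{X_-}$ and $X_+$ can be taken conformal on $\Omega_+$ in the universal cover, after conjugating so that one side is the disk. I would first try the standard construction: take the Fuchsian group uniformizing $X_+$ and a Beltrami differential supported on the exterior disk with $\|\mu\|_\infty=\tanh d$ (the Teichm\"uller extremal one). Solving the Beltrami equation then produces a map conformal on $\Delta$ that conjugates to $\rho$, and Lehto's theorem applies to its restriction to $\Delta$. If the constant turns out to be off by the normalization of the hyperbolic metric, I would adjust by the factor relating $\rho_X$ to $(1-|z|^2)^{-2}$ and recheck the numerics against $.0739$. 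The conclusion then follows from Theorem \ref{schwarzian bound}.
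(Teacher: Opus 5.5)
Your argument is correct. It reduces to Theorem \ref{schwarzian bound} just as the paper does; the difference is the estimate you use to bound the Schwarzian.

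The paper invokes the Bridgeman--Tee result that the Bers embedding is $\frac32$-Lipschitz from the Teichm\"uller metric to the $L^\infty$ norm. It then integrates along the Teichm\"uller geodesic from the Fuchsian point, where the Bers embedding vanishes, to get $\|\phi(\rho)\|_\infty\le\frac32\, d_T\big(X_+(\rho),\overline{X_-(\rho)}\big)$.

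You instead lift the Teichm\"uller extremal Beltrami differential, of norm $k=\tanh d$, to one half-plane. Solving the Beltrami equation gives a global $e^{2d}$-quasiconformal map that is conformal on the other half-plane and uniformizes $\Omega_+(\rho)$. You then apply the classical K\"uhnau--Lehto bound $\sup(1-|z|^2)^2|S(f)|\le 6k$. Your concern about normalization is unnecessary. In the paper's curvature $-1$ normalization, Nehari's $6$ becomes $\frac32$, and this bound reads exactly $\|\phi_+(\rho)\|_\infty\le\frac32\tanh d$.

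Your route needs only a classical pointwise estimate at the Fuchsian base point, and it gives the slightly sharper bound $\frac32\tanh d\le\frac32 d$. The paper's route uses a global Lipschitz statement, which is more than is needed here. Both give $\|\phi_+(\rho)\|_\infty<.0735<.0739$.

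One point to tighten: the affine-action conclusion rests on Theorem \ref{Main Theorem 3}, which needs both $\Omega_+(\rho)$ and $\Omega_-(\rho)$ to be moderately bent. The proof of Theorem \ref{schwarzian bound} only yields moderate bending on the side whose Schwarzian is small. A bound on $\phi_+$ alone therefore gives non-criticality of $h$, but not the proper affine action. The hypothesis is symmetric, since $d_T\big(X_+,\overline{X_-}\big)=d_T\big(\overline{X_+},X_-\big)$. Running your construction with the roles of the two half-planes exchanged gives $\|\phi_-(\rho)\|_\infty\le\frac32\tanh d$ as well, so you should state and use both bounds.
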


\begin{proof}
Let $\beta_\nu$ be the bending laminations of $\rho$.
Bridgeman and Tee \cite{BrTee} showed that the Bers embedding $B_{X}$ is $\frac{3}{2}$-Lipschitz with respect to the Teichm\"uller metric 
on the domain and $L^\infty$ norm on the image. 
Thus integrating along the Teichm\"uller geodesic between $\overline{X_-(\rho)}$ and $X_+(\rho)$ in $\mathcal T(S)$ we get
$$\|\phi_-(\rho)\|_\infty \leq \frac{3}{2}d_T\left(X_{+}(\rho),\overline{X_-(\rho)}\right).$$ 
Therefore, if $d_T\left(X_{+}(\rho),\overline{X_-(\rho)}\right) < .049$ then
$$\|\phi_-(\rho)\|_\infty < \frac{3}{2}(.049)  = . 0735,$$
so our result follows from Theorem \ref{schwarzian bound}.
\end{proof}

Our next criterion  is in terms of the quasiconformal distortion of the limit set. 
Recall that a Jordan curve is a {\em $K$-quasicircle} if it is the image of the unit circle under a 
$K$-quasiconformal homeomorphism of $\hat\C$. If $\rho \in \QF(S)$, then the limit set $\Lambda(\rho)$ of $\rho(\pi_1(S))$
is a $K$-quasicircle for some $K$.

\begin{theorem}\label{kqcircle}
If  $\rho\in \QF(S)$ is not fuchsian and its limit set $\Lambda(\rho$) is a $K$-quasicircle for some $K < 1.05$, then
 $\rho$ is not a critical point of $h$ and 
there is a proper affine action of $\pi_1(S)$ on $\mathfrak{sl}(2,\C)$ with linear part equal to $\Ad\rho$.
\end{theorem}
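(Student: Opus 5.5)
The natural strategy is to reduce Theorem \ref{kqcircle} to Theorem \ref{teich-thm} or Theorem \ref{schwarzian bound}. Either route converts the quasicircle hypothesis into a bound on the Teichm\"uller distance between $X_+(\rho)$ and $\overline{X_-(\rho)}$, or on the norm of the Schwarzian derivative $\phi_\nu(\rho)$. The non-fuchsian hypothesis then supplies the strict positivity required in those theorems: if $\rho$ is not fuchsian, then $X_+(\rho)\neq\overline{X_-(\rho)}$ and $\phi_\nu(\rho)\neq 0$.

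The plan is as follows. Suppose $\Lambda(\rho)=F(S^1)$ for some $K$-quasiconformal homeomorphism $F$ of $\hat\C$. The first step is to replace $F$ by a $\rho$-equivariant map. Conjugate so that $\rho$ is close to a fuchsian group, and use the measurable Riemann mapping theorem together with a barycentric or averaging argument (for example, Douady--Earle extension of the boundary correspondence) to obtain a $\rho$-equivariant $K'$-quasiconformal map $\Phi$ that carries the unit circle to $\Lambda(\rho)$ and conjugates a fuchsian group $\Gamma_0$ to $\rho(\pi_1(S))$.

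A cleaner way to handle this step is via the quasisymmetry or cross-ratio characterization. A Jordan curve is a $K$-quasicircle exactly when the conformal welding, equivalently the reflection across it, has bounded distortion, and the optimal $K$ is governed by a conformally invariant quantity. By Kühnau and Lehto's theorem, a $K$-quasicircle admits a $K^2$-quasiconformal reflection, which is automatically equivariant when taken to be the extremal one. Then the Riemann map $\Delta\to\Omega_+$, composed with the reflection and with the Riemann map of $\Omega_-$, yields a $\rho$-equivariant $K^2$-quasiconformal map between $X_+(\rho)$ and $\overline{X_-(\rho)}$. Hence
$$d_T\left(X_+(\rho),\overline{X_-(\rho)}\right)\le \tfrac12\log K^2=\log K.$$
With $K<1.05$ this gives $\log K<0.0488<0.049$, and Theorem \ref{teich-thm} concludes the proof.

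As an alternative route, we can use Lehto's bound directly: for a $K$-quasicircle, the Schwarzian of the Riemann map satisfies $\|S(f)\|_\infty\le \frac32\cdot\frac{K^2-1}{K^2+1}$. With $K<1.05$ this gives roughly $0.0732<0.0739$, and Theorem \ref{schwarzian bound} applies. I would present this as the primary argument, since it avoids equivariance issues entirely. The Schwarzian norm is computed on $\Omega_\nu$ and descends to $X_\nu(\rho)$ with the same supremum, and it is nonzero because $\rho$ is not fuchsian, as Nehari's univalence argument shows that $\phi_\nu=0$ forces $\Omega_\nu$ to be a disk.

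The main obstacle is pinning down the exact form of the classical bound relating the quasicircle constant to the Schwarzian, or to the quasiconformal reflection constant, since normalizations differ between $\|\cdot\|_\infty$ with respect to $\rho_X|dz|^2$ and the classical $(1-|z|^2)^2$ weighting. I would check the numerics carefully: $K=1.05$ gives $\frac{K^2-1}{K^2+1}\approx0.0488$, and multiplying by $3/2$ gives about $0.0732$. This fits just under the threshold $0.0739$, and the closeness of this margin to the constant chosen in the statement suggests this is the intended route.
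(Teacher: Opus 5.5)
Your second route, which you designate as primary, is correct, but it is not the paper's route.

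\textbf{The paper's route.} The paper takes the $K$-quasiconformal map $f$ with $f(\Half)=\Omega_+(\rho)$ and $f(\overline\Half)=\Omega_-(\rho)$. By the triangle inequality in universal Teichm\"uller space it gets $d_{\mathcal T}(\Omega_+(\rho),\overline{\Omega_-(\rho)})\le\log K<0.049$. It then invokes Theorem \ref{equivariant}. That theorem rests on the Bridgeman--Tee fact that the Bers embedding is $\frac32$-Lipschitz on universal Teichm\"uller space, which gives $\|\tilde\phi_-\|_\infty<0.0735$ and hence lets Theorem \ref{schwarzian bound} apply.

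\textbf{Your route.} You extend the Riemann map $f_\nu$ of $\Omega_\nu$ to a $K^2$-quasiconformal map of $\hat\C$. You should write this step out: if $F$ is the $K$-quasiconformal map carrying the circle to $\Lambda(\rho)$, reflect $F^{-1}\circ f_\nu$ across the circle and compose with $F$. You then apply the classical K\"uhnau--Lehto estimate $(1-|z|^2)^2|S(f)|\le 6\frac{K'-1}{K'+1}$ for conformal maps admitting $K'$-quasiconformal extensions. In the paper's normalization, $\rho_\Delta=4(1-|z|^2)^{-2}$, so the paper's norm is a quarter of the classical one (consistent with Nehari's $3/2$). Your bound $\frac32\cdot\frac{K^2-1}{K^2+1}<0.0732$ is therefore right.

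\textbf{Comparison.} Your argument is more classical. It needs only an estimate at the basepoint of the Bers embedding, not its global Lipschitz property or universal Teichm\"uller space. It is also slightly sharper, since $\frac{K^2-1}{K^2+1}=\tanh(\log K)<\log K$. The paper's route has the advantage of reusing the chain of Theorems \ref{teich-thm} and \ref{equivariant} it had already built. Your bound holds for both $\Omega_+$ and $\Omega_-$. That is genuinely needed for the affine conclusion, because Theorem \ref{Main Theorem 3} requires both components to be moderately bent.

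\textbf{Your first route.} You should drop it as written, because the claim that the extremal quasiconformal reflection is ``automatically equivariant'' is unjustified. Conjugating an extremal reflection by $\rho(\gamma)$ produces another extremal reflection, so equivariance would follow only from uniqueness. Extremal maps with prescribed boundary behaviour are in general not unique. Averaging (for instance a Douady--Earle barycentric extension) restores equivariance but loses the sharp dilatation bound. A non-equivariant map of dilatation $K^2$ only bounds the distance in universal Teichm\"uller space. The natural map $\mathcal T(S)\to\mathcal T$ is a strict contraction (McMullen), so that distance is in general strictly smaller than $d_T(X_+(\rho),\overline{X_-(\rho)})$. You therefore cannot feed this bound into Theorem \ref{teich-thm}. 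This is exactly why the paper passes through Theorem \ref{equivariant} instead.
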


In the proof of Theorem \ref{kqcircle}, we will need a version of Theorem \ref{teich-thm} in the non-equivariant setting, i.e. in terms of
the distance between $\Omega_+(\rho)$ and $\Omega_-(\rho)$ in the universal Teichm\"uller space $\mathcal T$.

Roughly, the {\em universal Teichm\"uller space}, denoted  $\mathcal T$,  is the space of conformal structures on the unit disk (modulo boundary) which are
quasiconformal  to the standard conformal structure on the unit disk. 
Using the measurable Riemann mapping theorem,  this can be made explicit using Beltrami differentials as follows (see \cite{lehto} for further details). 

Let  $L^\infty(\Half)$  the space of measurable functions $\mu:\Half\rightarrow \C$ with essential supremum $\|\mu\|_\infty < \infty$ and $L^\infty_1(\Half)$ be the open unit ball about zero with respect to the $L^\infty$-norm.
If $\mu \in  L_1^\infty(\Half)$, let $\hat\mu\in L_1^\infty(\hat\C)$  be  the extension of $\mu$ to the lower half-plane by Schwarz reflection, i.e.
$$\hat\mu(z) = \left\{ \begin{matrix} 
\mu (z) & z\in \Half\\
&\\
\overline{\mu(\overline{z})}& z\in \overline{\Half}
\end{matrix}
\right.
$$
 We then define $w_\mu:\hat\C\rightarrow\hat\C$  to be  solution to the Beltrami equation $w_{\overline z} = \hat\mu w_z$  
which fixes $0$, $1$, and $\infty$.
Then $\mathcal T=  L_1^\infty(\Half)/\!\sim$ where $\mu \sim \nu$ if $w_\mu = w_\nu$ on $\R$. 
We denote by $d_{\mathcal T}$ the Teichm\"uller metric  on $\mathcal T$ given by
$$d_{\mathcal T}([\mu],[\nu]) = \frac{1}{2}\inf_h K(h)$$
where the infimum is over all $h:\Half\rightarrow \Half$  quasiconformal such that  $h = w_\nu\circ w_\mu^{-1}$ on $\R$. 

Similarly, we let $\overline{\mathcal T}$ be universal Teichm\"uller space of conformal structures quasiconformal to the standard conformal
structure on the lower half plane $\overline\Half$. Schwarz reflection  gives an isometry between $(\mathcal T,d_{\mathcal T})$ and $(\overline{\mathcal T}, d_{\overline{\mathcal T}})$.

For $[\mu] \in \overline{\mathcal T}$ we can  extend $\mu$ to $\check\mu\in L_1^\infty(\hat\C)$ where $\check\mu$ is zero on $\Half$ and let 
$w^\mu$ be the solution to the Beltrami equation  $w_{\overline z} = \check\mu w_z$ which fixes $0,1,\infty$. It follows that $f^\mu$ is conformal
on $\Half$. The Bers embedding $\beta_{\Half}:\overline{\mathcal T} \rightarrow Q(\Half)$ is then defined by  
$B_{\Half}([\mu])= S(\left.w^\mu\right|_{\Half})$.  

We note that if $\rho \in \QF(S)$ then by definition there exists a quasiconformal map 
$f: \hat\C \rightarrow \hat\C$ conjugating a fuchsian action on $\hat\C$ to the quasifuchsian action given by $\rho$. Therefore
$f$ maps $\Half$ to $\Omega_+(\rho)$ and $\overline{\Half}$ to $\Omega_-(\rho)$ and its Beltrami differential gives a point in 
$ \mathcal T\times \overline{\mathcal T}$ which we denote by $(\Omega_+(\rho), \Omega_-(\rho))$. 
The following is the desired generalization of Theorem \ref{teich-thm}.

\begin{theorem} 
\label{equivariant}
If $\rho \in \QF(S)$ and  $0 < d_{\mathcal T}\left(\Omega_+(\rho), \overline{\Omega_-(\rho)}\right) < .049$,
then $\rho$ is not a critical point of $h$ and 
there is a proper affine action of $\pi_1(S)$ on $\mathfrak{sl}(2,\CC)$ with linear part equal to $\Ad(\rho)$.
\end{theorem}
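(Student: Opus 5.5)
Following the proof of Theorem \ref{teich-thm}, we reduce everything to Theorem \ref{schwarzian bound}, replacing the equivariant Bers embedding by the universal one $B_{\Half}:\overline{\mathcal T}\to Q(\Half)$. First, normalize by a M\"obius transformation so that the quasiconformal conjugacy $f$ has $\Omega_+(\rho)=f(\Half)$ and $\Omega_-(\rho)=f(\overline{\Half})$. The key observation is that the Schwarzian of the uniformizing map of $\Omega_+(\rho)$ (equivalently of $\Omega_-(\rho)$, after reflection) can be realized as the image under $B_{\Half}$ of a point of $\overline{\mathcal T}$. Its distance to the basepoint $[0]$ is the universal Teichm\"uller distance between the conformal structures on the two sides of $\Lambda(\rho)$; this is exactly $d_{\mathcal T}(\Omega_+(\rho),\overline{\Omega_-(\rho)})$ after Schwarz reflection identifies $\overline{\mathcal T}$ with $\mathcal T$. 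Concretely, I would compose $f$ with the Beltrami solution straightening $\Omega_+$ so that the map is conformal on $\Half$ and carries the $\overline{\Half}$-dilatation which measures the discrepancy between the two structures.

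Second, I would use the universal version of the Bridgeman--Tee estimate, namely that $B_{\Half}$ is $\frac32$-Lipschitz from $(\overline{\mathcal T},d_{\overline{\mathcal T}})$ to $(Q(\Half),\|\cdot\|_\infty)$. The proof in \cite{BrTee} is local: it bounds the derivative of the Bers embedding by Nehari's $3/2$ bound applied to infinitesimal Beltrami differentials, and so it works verbatim in universal Teichm\"uller space. The equivariant statement used in Theorem \ref{teich-thm} is in fact obtained from this universal one by restricting to $\rho$-invariant differentials. Integrating along a Teichm\"uller geodesic in $\overline{\mathcal T}$ from $[0]$, which maps to $0$, gives
$$\|\tilde\phi_-(\rho)\|_\infty\le \tfrac32\, d_{\mathcal T}\big(\Omega_+(\rho),\overline{\Omega_-(\rho)}\big)<\tfrac32(.049)=.0735.$$
Since the hyperbolic sup norm of $\tilde\phi_-(\rho)$ on the disk equals that of $\phi_-(\rho)$ on $X_-(\rho)$, we get $\|\phi_-(\rho)\|_\infty<.0739$.

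Third, the positivity hypothesis $d_{\mathcal T}>0$ means $\Lambda(\rho)$ is not a round circle, so $\rho$ is not fuchsian and hence $\phi_-(\rho)\neq0$. Theorem \ref{schwarzian bound} then gives both conclusions. The main obstacle is the first step: making sure the universal Teichm\"uller distance matches the argument of $B_{\Half}$ precisely, including the factor of $\frac12$ in the definition of $d_{\mathcal T}$ and the reflection conventions, so that the constant $\frac32$ carries through unchanged. I would check this by tracing how the equivariant statement in Theorem \ref{teich-thm} lifts to the universal cover.
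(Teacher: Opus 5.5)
This is essentially the paper's argument: you identify the Schwarzian of the uniformizing map as the image of the welding point under the universal Bers embedding, integrate Bridgeman--Tee's $\frac32$-Lipschitz bound for $B_{\Half}$ (or $B_{\overline{\Half}}$) along a Teichm\"uller geodesic to get $\|\tilde\phi_-(\rho)\|_\infty<.0735$, and then apply Theorem~\ref{schwarzian bound}. Your explicit check that $d_{\mathcal T}>0$ forces $\phi_-(\rho)\neq 0$ supplies a hypothesis of Theorem~\ref{schwarzian bound} that the paper leaves implicit. Also, since you note the argument applies equally to $\Omega_+(\rho)$ after reflection, you in fact get $\|\phi_\pm(\rho)\|_\infty<.0735$ for both signs, which is what Theorem~\ref{Main Theorem 3} needs for the affine action, since it requires both $\Omega_+(\rho)$ and $\Omega_-(\rho)$ to be moderately bent.
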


\begin{proof}
Bridgeman and Tee  \cite{BrTee} also prove that  the maps $B_\Half$ and  $B_{\overline\Half}$ are $\frac{3}{2}$-Lipschitz.
Thus integrating along the Teichm\"uller geodesic between $\overline{\Omega_-(\rho)} $ and $\Omega_+(\rho)$ in $\mathcal T$ we get
$$\|\tilde\phi_-(\rho)\|_\infty < \frac{3}{2}d_{\mathcal T}\left(\Omega_+(\rho), \overline{\Omega_-(\rho)}\right) .$$ 
So if $d_{\mathcal T}\left(\Omega_+(\rho), \overline{\Omega_-(\rho)}\right) < .049$, then
$$\|\phi_-(\rho)\|_\infty = \|\tilde\phi_-(\rho)\|_\infty < \frac{3}{2}(.049)  = . 0735$$
Then the result follows from Theorem \ref{schwarzian bound}.
\end{proof}

We can now prove Theorem \ref{kqcircle}.

\begin{proof}
Let $f:\hat\C\rightarrow \hat\C$ be the $K$-quasiconformal homeomorphism mapping $\Half$ to $\Omega_+(\rho)$ and $\overline\Half$ to $\Omega_-(\rho)$. Then
$$d_{\mathcal T}(\Omega_+(\rho), \Half) \leq \frac{1}{2}\log K(f)\qquad  d_{\overline{\mathcal T}}(\Omega_-(\rho),\overline\Half)  \leq \frac{1}{2}\log K(f).$$
Therefore 
$$d_{\mathcal T}(\Omega_+(\rho), \overline{\Omega_-(\rho)}) \leq d_{\mathcal T}(\Omega_+(\rho), \Half)+d_{\mathcal T}(\Half,\overline{\Omega_-(\rho)}) \leq \log K(f).$$
Thus if $K < e^{.049} =1.05022$ then $d_{\mathcal T}\left(\Omega_+(\rho), \overline{\Omega_-(\rho)}\right) \leq .049$ and the result follows  from Theorem \ref{equivariant} above.
\end{proof}

\end{document}